\documentclass[accepted]{uai2026} 
                        
\usepackage[american]{babel}

\usepackage{natbib} 
\usepackage{mathtools} 
\usepackage{booktabs} 
\usepackage{tikz} 
\usepackage{algorithm}
\usepackage{algpseudocode}
\usepackage{amssymb}
\usepackage{amsthm}
\usepackage{multicol}
\usepackage{subcaption}

\DeclarePairedDelimiter{\set}{\lbrace}{\rbrace}

\newtheorem{theorem}{Theorem}[section]
\newtheorem{lemma}[theorem]{Lemma}

\theoremstyle{definition}
\newtheorem{definition}{Definition}[section]
\theoremstyle{remark}
\newtheorem{remark}[definition]{Remark}
\newtheorem{example}[definition]{Example}

\title{Sign Identifiability of Causal Effects in Stationary Stochastic Dynamical Systems}

\author[1]{\href{mailto:<g.g.van.seeventer@liacs.leidenuniv.nl>?Subject=UAI 2026 paper}{Gijs~van~Seeventer}{}}
\author[1]{\href{mailto:<s.salehkaleybar@liacs.leidenuniv.nl>?Subject=UAI 2026 paper}{Saber~Salehkaleybar}{}}
\affil[1]{%
    LIACS\\
    Leiden University\\
    Netherlands
}

\begin{document}
\maketitle

\begin{abstract}
  We study identifiability in continuous-time linear stationary stochastic differential equations with a known causal structure. Unlike existing approaches, we relax the assumption of a known diffusion matrix, thereby respecting the model’s intrinsic scale invariance. Therefore, rather than recovering drift coefficients themselves, we introduce edge-sign identifiability: for a given causal structure,  we ask whether the sign of a given drift entry is uniquely determined across all observational covariance matrices induced by parametrisations compatible with that structure. This leads to a trichotomy of edge-sign identifiability: identifiable, non-identifiable, and partially identifiable. This trichotomy introduces the new notion of partial identifiability to the literature, which we show is a genuine category in our setting. Under a notion of faithfulness, we derive criteria to identify membership of each category for general graphs. Applying our criteria to specific causal structures, both analogous to classical causal settings (e.g., instrumental variables) and novel cyclic settings, we determine their edge-sign identifiability and, in some cases, obtain explicit expressions for the sign of a target edge in terms of the observational covariance matrix.
\end{abstract}

\section{Introduction}\label{sec:intro}
Learning dynamical systems from observational data via parametrised models is central across scientific domains, from systems biology \citep{marbach2012wisdom} to economics \citep{hamilton2020time}. The aim of such modelling is often to answer a causal question: \textit{what happens to a target variable $Y$ if we intervene on a variable $X$?} 

Recently, causal modelling of stationary diffusions has emerged to address settings in which time trajectories are unavailable or unobservable \citep{fitch2019learning,lorch2024causal}. In such cases, observations can be viewed as samples collected from a stationary process. These processes are often described by stationary stochastic differential equations (SDEs)
\citep{ksendal2003}. While stationary SDEs induce time-invariant observational distributions, they internally encode temporal causal dependencies, allowing for a natural causal interpretation \citep{lorch2024causal,amendola2025structural}. This interpretation admits a graphical representation analogous to structural causal models (SCMs) \citep{sokol2014causal,Pearl2009-fq}. Unlike acyclic SCMs, graphical models induced by stationary SDEs naturally allow for cycles and self-loops, features ubiquitous in real-world dynamical systems.

In the literature on graphical models, it is often assumed that the causal structure, namely, which variables directly affect others, is known. Given a fixed causal structure, a central question is identifiability, i.e., whether the value of a causal effect (or a property of it, such as its sign) can be determined from the observational distribution. While determining the magnitude of a causal effect is preferable, the sign alone is often informative, e.g., whether an intervention helps or harms. Therefore, sign identifiability has been and remains an active research topic \citep{manski1990nonparametric,lanners2026data}. 

Identifiability is also a central question in the literature on causal SDE models, which studies causal SDEs under various assumptions (see Section~\ref{sec:related_work}). In the absence of path data, we focus on the most tractable and interpretable SDE: continuous-time, linear, time-homogeneous stationary SDEs, i.e., the stationary Ornstein-Uhlenbeck (OU) process. For OU processes, existing notions of identifiability assume a known causal structure and a fixed part of the model parameters, specifically the diffusion matrix. However, fixing the diffusion matrix imposes a strong restriction since the OU process is invariant under positive rescaling (see Section~\ref{sec:background_sde}). Therefore, respecting this scale invariance, we assume that only the causal structure is known, while the diffusion matrix is allowed to be rescaled. Our main contributions are as follows:

\begin{itemize}
    \item \textbf{Sign identifiability.} \quad
     Only assuming the causal structure to be known limits what can be learned about the causal effect. Therefore, we introduce a notion of edge-sign identifiability (see Section~\ref{sec:definitions}), focusing on the sign rather than the magnitude of a causal effect. 
    \item \textbf{Categories of sign identifiability.}\quad
    Our analysis distinguishes three categories: identifiable, non-identifiable, and partially identifiable. While the term \emph{partial identifiability} is often used as a name for the broader research area of bounding causal effects \citep{lanners2026data}, we use it here to denote a new identifiability category for edge signs. We also show that in the confounding structure, the sign of the causal effect is partially identifiable with positive measure (for more details, see Section~\ref{sec:general_sign_identifiability}), making it a genuine category. Moreover, numerical experiments in Section~\ref{sec:numerical_results} suggest that partial identifiability constitutes an important intermediate category besides identifiability and non-identifiability.
    \item \textbf{General criteria and applications to specific structures.} \quad
    We derive general criteria for edge-sign identifiability and apply them to specific graph structures in Section~\ref{sec:edg_sign_identifiability_results}. General criteria include, for instance, a graphical criterion for determining whether an edge is identifiable. We apply the general criteria in some special graph structures, including causal structures analogous to bivariate cause–effect and instrumental variable settings, for which we obtain an explicit expression for the sign of the causal effect in terms of the covariance matrix.
\end{itemize}

\section{Preliminaries and Problem Setup}
\subsection{Background and Notation}
We denote random vectors in $\mathbb{R}^d$ as $X\in\mathbb{R}^d$. Its $i$th entry is $X_i\in\mathbb{R}$ for $i\in\{1,\dots,d\}$. In addition, we denote the set of positive definite matrices  as $PD_d$ and the set of positive definite diagonal matrices as $PDD_d$, where $d$ is the dimension. Note that $PDD_d\subset PD_d$.

\subsubsection{Stochastic Differential Equations}\label{sec:background_sde}
Stochastic differential equations (SDEs) describe stochastic processes $\{X(t)\}, X(t)\in\mathbb{R}^d$, which is a collection of random vectors $X(t)$ for a time $t$. We will consider \textit{continuous} \textit{stationary} processes, which means that the probability density $f_t\big(X(t)\big)$ is the same for all considered times, i.e., $f_t\big(X(t)\big)=f_{t'}\big(X(t')\big)$ with $t,t'\geq 0$. A general SDE has the form $dX(t)=g(X(t),t)dt+h(X(t),t)d\beta(t)$, where $g:\mathbb{R}^d\times\mathbb{R}_{\ge 0}\rightarrow\mathbb{R}^d$ is the drift function, $h:\mathbb{R}^d\times\mathbb{R}_{\geq 0}\rightarrow\mathbb{R}^{d\times d}$ is the diffusion function and $\beta$ is a random noise term.  We consider linear, time-homogeneous SDEs driven by a Wiener process, i.e., SDEs with linear, time-invariant drift and diffusion functions and Gaussian noise. We will assume the commonly used $It\hat{o}$ interpretation, which is a choice in what it means to integrate a random noise term \citep{ksendal2003}.

The only non-trivial continuous stationary, linear and time-homogenous SDE is known as the multivariate Ornstein-Uhlenbeck (OU) process \citep{doob}.\footnote{If the OU process is not initialized at its stationary distribution, it converges to stationarity only asymptotically. We therefore assume that the process is either initialized at stationarity or observed after a sufficiently long period. After this period, we relabel time so that the stationary regime starts at $t=0$ and consider $t\ge 0$.}
The OU process is described by
\begin{equation}\label{eq:OU}
    dX(t)=A\Big(X(t)-b\Big)dt + CdW(t),
\end{equation}
where $A\in\mathbb{R}^{d\times d}$ is the drift matrix, $b\in \mathbb{R}^d$ is a constant vector, $C\in\mathbb{R}^{d\times d}$ is the diffusion matrix and $W(t)$ is the $d$-dimensional Wiener process at time $t$. Due to being a Wiener process, the OU process is characterised by the first two moments, i.e., the mean $m(t)$ and the covariance matrix $\Sigma(t)$. Since we consider a stochastic \emph{stationary} process, $\frac{d}{dt}m(t)=0$ and $\frac{d}{dt}\Sigma(t)=0$. Therefore the mean $m=b$ and every entry of the covariance matrix $\Sigma_{ij}=\mathbb{E}\big[(X_i(t)-m)(X_j(t)-m)\big]$.  Without loss in generality, we take the mean $m=0$ such that $X(t)\sim\mathcal{N}(0,\Sigma)$.  Moreover, the stationarity condition $\frac{d}{dt}\Sigma=0$ is equivalent to the Lyapunov equation
\begin{equation}\label{eq:lyapunov}
    A\Sigma+\Sigma A^T = -D,
\end{equation}
where $D=CC^T$ \citep{Sarka_Solin_2019}. In addition, stationarity requires the drift matrix $A$ to be Hurwitz stable, i.e., the real parts of the eigenvalues of $A$ must be negative. Note that, given $D\in PD_d$, we have that $A$ is Hurwitz stable in the Lyapunov equation (Eq.~\eqref{eq:lyapunov}) if and only if $\Sigma\in PD_d$ \citep[Theorem~1.1]{frommer2012verified}.  Furthermore, if we choose an uncorrelated noise setting, i.e., the diffusion matrix $C$ being diagonal, we will always have $D\in PDD_d\subset PD_d$. If we have correlated noise, $C$ is no longer diagonal, and it needs to be verified that $D\in PD_d$. Unless otherwise noted, we will assume $D\in PDD_d$.

A Wiener process $W$ is scale invariant, i.e., $W(t)=W(at)/\sqrt{a}$ for $a\in\mathbb{R}_{+}$. Furthermore, given that we are considering a stationary process, we have that two times $t,at\in[0,\infty)$ are indistinguishable in the sense that $X(t)\overset{d}{=}X(at)$ in distribution. This means that we can always rescale the OU process with $\sqrt{a}$ to rewrite the original Eq.~\eqref{eq:OU} into
\begin{equation}
    dX(t)=aA\big(X(t)-m\big)dt + \sqrt{a}CdW(t).
\end{equation}
Note that scaling $C\mapsto\sqrt{a}C$ implies $D=CC^T\mapsto aD$. To preserve the Lyapunov equation (Eq.~\eqref{eq:lyapunov}), we need that $A\mapsto aA$. This means that we can always rescale the $(A,D)\mapsto(aA,aD)$. Therefore, from the covariance matrix, we can only identify the parameters of the drift matrix $A$ up to some global scaling $a>0$.

\subsubsection{Graphs}
Let $G=(V,E)$ be a directed graph with node set $V=\{V_0,\dots,V_d\}$ and edges $E$ where an edge $e:=(V_i,V_j)\in E$ is a directed edge from $V_i$ to $V_j$. Unless stated otherwise, all graphs are directed.

A directed path $V_i,\dots,V_j$ is a sequence such that there is a directed edge $V_k\rightarrow V_{k+1}$ for all $k=i,\dots,j-1$, where we call $V_i$ the ancestor of $V_j$. We denote the set of ancestors of $V_i$ in graph $G$ by $\operatorname{An}_G(V_i)$. Furthermore, if a directed path starts and ends at the same node, we call it a cycle, i.e. $V_i\rightarrow \dots \rightarrow V_i$. If a cycle has $n$ distinct nodes, then the cycle has length $n$. A special example of a cycle is the self-loop. A self-loop is a directed path from a node to itself, i.e., $V_i\rightarrow V_i$. Note that if a node $V_i$ is in a cycle, it is in its own ancestral set, i.e., $V_i\in\operatorname{An}_{G}(V_i)$.  If there are no cycles in a graph, we call it a directed acyclic graph (DAG). 

\subsubsection{Causal Interpretation of SDEs}\label{sec:background_causal}
Stochastic processes described by SDEs admit a causal interpretation in which interventions correspond to modifications of the equations \citep{sokol2014causal, lorch2024causal}. In particular, for the OU process (Eq.~\eqref{eq:OU}), we interpret a non-zero drift matrix entry $A_{ij}\neq0$ as a direct causal effect of process $X_j$ on $X_i$, with a causal strength given by $A_{ij}$, in line with \citep{amendola2025structural,varando2020graphical}. Under this perspective, several concepts from the structural causal model (SCM) literature are useful. One such concept is the representation of direct causal relations by a directed graph $G=(V,E)$, where the nodes $V_i\in V$ correspond to the process $X_i$ and edges $e\in E$ to the presence of a direct causal effect. When considering only the orientation of direct causal effects, i.e., ignoring their strength, we refer to this as the causal structure \citep{peters2017elements}. Accordingly, a drift matrix entry $A_{ij}\neq0$ is represented by an edge $V_j\rightarrow V_i$ with edge weight $A_{ij}$. With slight abuse of notation, we will represent an edge $e=(V_j,V_i)\in E$ with its corresponding drift entry $A_{ij}$, writing $A_e:=A_{ij}$. Then, $\operatorname{supp}(A)\subseteq E$ means that $ e\not\in E\implies A_{e}=0$. If equality holds, i.e., $\operatorname{supp}(A)=E$, then $ e\not\in E\iff A_{e}=0$. The latter corresponds to structural minimality in the SCM literature \citep{peters2017elements}. We will always assume structural minimality.

In addition, since Hurwitz stable matrices may have non-zero diagonal entries and need not be triangular, the graphs representing SDEs can contain cycles and self-loops. For triangular matrices, however, the eigenvalues coincide with the diagonal entries. Hence, Hurwitz stability in the triangular case requires all diagonal entries to be negative.\footnote{Ignoring diagonal entries (self-loops), a triangular matrix corresponds to a directed acyclic graph (DAG).}

A useful distinction in the SCM literature is between causal discovery (learning the causal graph from data) and causal inference (causal effect identification given a specified graph). We consider the latter setting and assume the directed graph $G=(V,E)$ is known.

Transferring concepts from the existing SCM literature to SDE-based models requires care because the underlying data-generating mechanisms differ. In an SCM, data are generated by structural assignments, and conditional-independence constraints are typically characterised via graphical separation criteria (e.g., $d$-separation for DAGs and $\sigma$-separation for cyclic SCMs) \citep{peters2017elements,bongers2021foundations}. In contrast, an SDE generates data through continuous-time stochastic dynamics, and in our setting, we observe samples from the stationary distribution induced by the SDE. Moreover, for SDEs describing stationary processes, such as the OU process, marginal independences are the only source of independence relations \citep{boege2025conditional}. This contrasts sharply with the SCM literature in which additional conditional independences arise and are characterised via $d$- and $\sigma$-separation.

\subsection{Definitions}\label{sec:definitions}
This section introduces the definitions used throughout the paper. We begin by defining a set of covariance matrices $\Sigma$ that encode the marginal independences implied by the assumed directed graph $G$. As discussed in the previous subsection, in our setting, marginal independences are the only source of independence constraints coming from a graph $G=(V,E)$. Under the assumption of a diffusion matrix $D\in PDD_d$ and $\operatorname{supp}(A)\subseteq E$, \cite[Theorem~1.2]{boege2025conditional}  gave a graphical criterion for such marginal independencies based on ancestral relationships. In particular,  $X_i\perp\!\!\!\perp X_j$ in any random vector $X$ distributed according to a stationary distribution respecting the causal structure, if and only if, $\operatorname{An}_G(V_i)\cap\operatorname{An}_G(V_j)=\emptyset$. 

\begin{definition}[T-Faithfulness]\label{def:m-faithful}
    We define the set $F_G$ of $t$-faithful covariance matrices $\Sigma$ for a graph $G=(V,E)$ as
    \begin{equation}     \begin{split}
        F_G :=\{&\Sigma:\,\Sigma\in PD_d;\, \forall V_i,V_j\in V,  \\ 
        &\operatorname{An}_G(V_i)\cap\operatorname{An}_G(V_j)=\emptyset \iff \Sigma_{ij}=0\}.
    \end{split} \end{equation}   
\end{definition}

\begin{remark}
    In other words, $t$-faithfulness assures that the marginal independences captured by $\Sigma$ are exactly those that can be read off from the graph by checking common ancestral relationships of pairs of variables. The latter can be encoded more densely represented in a trek graph (see \cite[below Definition~1.1]{boege2025conditional}), hence the term $ t$-faithful. The definition further guarantees that $\Sigma \in PD_d$, thereby ensuring valid solutions to the Lyapunov equation (Eq.~\eqref{eq:lyapunov}) (see Section~\ref{sec:background_sde}).
\end{remark}

The following four definitions focus on the possible $\operatorname{sign}(A_e)\in\{+,-,0\}$ associated with edges $e$ in the directed graph $G=(V,E)$ under the $t$-faithfulness assumption. Our focus on signs is motivated by the scaling invariance discussed in Section~\ref{sec:background_sde} where, for a given $\Sigma$, if $(A,D)$ satisfies the Lyapunov equation (Eq.~\eqref{eq:lyapunov}), then $(a A, a D)$ also satisfies it for any $a>0$. Hence, the drift matrix is only identifiable up to a global positive rescaling, and for any given edge $e$, we treat its sign as the primary information that can be recovered from $\Sigma$. The first two Definitions~\ref{def:edge_signature_set} and \ref{def:possible_edge_set} characterise which covariance matrices $\Sigma$ are compatible with a given $\operatorname{sign}(e)$ under a graph $G$. The third Definition~\ref{def:edge_identifiability} introduces a new notion of identifiability. When fixing the covariance matrix $\Sigma$, the third definition reduces to the fourth Definition~\ref{def:pointwise_edge_sign_identifiability}.

\begin{definition}[Edge Signature Set]\label{def:edge_signature_set}
    For a graph $G=(V,E)$ and edge $e\in E$, we define the edge signature set $\mathcal{M}^{k}_{G,e}$ as
    \begin{equation}     \begin{split}
        \mathcal{M}^{k}_{G,e}:=&\{\Sigma\in F_G\,: \exists A,D \text{ s.t. }A\Sigma + \Sigma A^T= -D;\, \\
        &D\in PDD_d; \text{supp}(A)= E;\, \text{sign}(A_e)=k\},
    \end{split} 
    \end{equation}
    where $k\in\{+,-\}$ and
    \begin{equation}     \begin{split}
        \mathcal{M}^{0}_{G,e}:=&\{\Sigma\in F_G\,: \exists A,D \text{ s.t. }A\Sigma + \Sigma A^T= -D;\, \\
        &D\in PDD_d; \text{supp}(A)\subset E;\, \text{sign}(A_e)=0\}.
    \end{split} \end{equation}
\end{definition}

\begin{remark}\label{remark:edge_signature_set_interpretation}
    We interpret this definition as all $t$-faithful covariance matrices $\Sigma$ that could generate a $\pm$ sign for edge $e\in E$ from graph $G$ when the drift matrix matches the causal structure of the graph $G=(V,E)$. While we focus on studying the $\mathcal{M}^{+}_{G,e}$ and $\mathcal{M}^{-}_{G,e}$, the $\mathcal{M}^{0}_{G,e}$ edge signature set will be a useful theoretical tool.
\end{remark}

Since we are only interested in covariance matrices compatible with the graph under structural minimalism, we define a set of possible covariance matrices.

\begin{definition}[Possible Set]\label{def:possible_edge_set}
   For a graph $G=(V,E)$ and edge $e\in E$, define the possible set as
   \begin{equation}
   \mathcal{M}^{p}_{G,e}:=\mathcal{M}^{+}_{G,e}\cup\mathcal{M}^{-}_{G,e}.
   \end{equation}
\end{definition}

Next, we introduce the notion of edge-sign identifiability.

\begin{definition}[Edge-Sign Identifiability]\label{def:edge_identifiability}
    The sign of edge $e\in E$ in graph $G=(V,E)$ with signature-sets $\mathcal{M}^{k}_{G,e}$ and $k\in\{+,-\}$, is:
    \begin{itemize}
        \item non-identifiable if $\mathcal{M}^{+}_{G,e}=\mathcal{M}^{-}_{G,e}$ while $\mathcal{M}^{-}_{G,e}\neq\emptyset$,
        \item partially identifiable if $\mathcal{M}^{+}_{G,e}\neq\mathcal{M}^{-}_{G,e}$ and $\mathcal{M}^{+}_{G,e}\cap\mathcal{M}^{-}_{G,e}\neq\emptyset$,
        \item identifiable if $\mathcal{M}^{+}_{G,e}\cap\mathcal{M}^{-}_{G,e}=\emptyset$ while $\mathcal{M}^{-}_{G,e}\neq\emptyset$ or $\mathcal{M}^{+}_{G,e}\neq\emptyset$.
    \end{itemize}
\end{definition}

\begin{remark}\label{remark:interpretation_edge_identifiability}
    Intuitively, Definition~\ref{def:edge_identifiability} formalizes whether the sign of the edge weight $A_e$ is determined by the covariance matrix.
    If $e$ is \emph{identifiable}, then all the covariance matrices fix the sign.   
    If $e$ is \emph{non-identifiable}, then no covariance matrix ever resolves the sign in the sense that whenever the covariance matrix is compatible with one sign, it is also compatible with the other. If $e$ is \emph{partially identifiable}, then there exist covariance matrices for which the sign is uniquely determined, and there exist others that are compatible with both signs. Considering the edge weight $A_e$ as the direct causal strength, the edge-sign identifiability thus formalises whether we can learn its sign value. 
\end{remark}

When considering a fixed covariance matrix $\Sigma \in \mathcal{M}^p_{G,e}$, Definition~\ref{def:edge_identifiability}  reduces to the following definition. For a proof, see Appendix~\ref{sec:appendix_pointwise_clarification}.

\begin{definition}[Pointwise Edge-Sign Identifiability]\label{def:pointwise_edge_sign_identifiability}
        Let $e\in E$ be an edge in graph $G=(V,E)$ with signature-sets $\mathcal{M}^{k}_{G,e}$ and $k\in\{+,-\}$. For a covariance matrix $\Sigma\in\mathcal{M}^p_{G,e}$, the sign of $e$ is:
    \begin{itemize}
        \item non-identifiable if $\Sigma\in\mathcal{M}^{+}_{G,e}\cap\mathcal{M}^{-}_{G,e}$,
        \item identifiable if $\Sigma\not\in\mathcal{M}^{+}_{G,e}\cap\mathcal{M}^{-}_{G,e}$.
    \end{itemize}
\end{definition}

\subsection{Sign Identification Problem}
Assume the data-generating process is an OU process (Eq.~\eqref{eq:OU}) with uncorrelated noise such that $D\in PDD_d$ under the assumption of structural minimality. Given a directed graph $G=(V,E)$ and an edge $e\in E$, determine whether the sign of $e$ is non-identifiable, partially identifiable, or identifiable according to the Definition~\ref{def:edge_identifiability}. If identifiable, determine its $\operatorname{sign}(e)$.

\section{Related Work}\label{sec:related_work}
The use of SDEs as a causal modelling tool is an active field of research. SDEs can model both dynamic and stationary processes. Many works focus on dynamic processes \citep{mogensen2018causal,stippinger2023causal,cinquini2025practical}. This requires access to sample paths (time trajectories), which is not always feasible in practice. For example, most single-cell RNA sequencing techniques destroy the cell being sampled \citep{liu2024resolving}. On the other hand, works on stationary processes in general do not require access to sample paths, although there are exceptions \citep{manten2024signature}. 
Without requiring a sample path, typically obtained via discrete measurements, most works (discussed below) focus on continuous-time models. However, there are exceptions that consider discrete time \citep{recke2026identifiability}.

The research on causal modelling of stationary processes with continuous time has so far mainly focused on the study of linear SDEs with Gaussian noise, i.e., on stationary OU processes. Since the main interest is the direct causal effects and the associated sparsity structure, the research has focused on the drift matrix $A$. The drift matrix $A$ is constrained by the Lyapunov equation (Eq.~\eqref{eq:lyapunov}), which may explain why some works on \textit{causal} stationary OU processes refer to it as \textit{graphical continuous Lyapunov models} (GCLM), first coined in \citep{varando2020graphical}. In this line of research, initial works focused on causal discovery \citep{fitch2019learning,dettling2024lasso}.

More recently, identifiability has received increased attention. 
\cite{dettling2023identifiability} introduces a notion of (generic) identifiability based on uniqueness: for a given graph $G$ and $D \in PD_d$, identifiability holds if all stable drift matrices $A$ are in one-to-one correspondence (almost surely) with covariance matrices $\Sigma$. 
While they derive results under $D \in PD_d$, stronger and more comprehensive characterisations are obtained under the stricter assumption $D \in PDD_d$. Their characterisations combine conditions derived from the covariance matrix $\Sigma$ with structural constraints given by the sparsity pattern of $A$. Building on this work, \cite{amendola2025structural} considers $D \in PDD_d$ and graphs $G$ that are acyclic when self-loops are ignored. They provide a graphical criterion for model equivalence together with a polynomial-time algorithm to decide whether a model is unique in a given equivalence class and whether two models are equivalent.

Our work differs from these approaches by relaxing the strong assumption that $D$ is known. 
Requiring a fixed $D$ in addition to the graph, ignores the scale invariance inherent to the stationary OU process, as reflected both in the Lyapunov equation (Eq.~\eqref{eq:lyapunov}) and in the driving Wiener process. 
For example, as a consequence, the identifiability notion in \citep{dettling2023identifiability} does not capture \textit{partial sign identifiability}: for a given graph, there may exist admissible covariance matrices $\Sigma$ for which the sign of an edge weight is identifiable, and others for which it is not. As an example, see Remark~\ref{remark:confounding_difference_fixed_difussion}.

Beyond linear SDEs, recent advances have addressed causal discovery for general drift and diffusion parametrisations in stationary continuous-time processes. \cite{lorch2024causal} propose a method based on a kernel objective that quantifies the deviation of an SDE parametrisation from empirical observations.  \cite{bleile2026efficient} improved this approach in terms of computational efficiency. Our sign identifiability results could be of interest for linear parametrisations learned by these methods to assess whether they can correctly recover edge signs in cases where the signs are identifiable. In addition, in such settings, these methods also do not account for the existence of partial and non-identifiable edges.

\section{Edge-Sign Identifiability Results}\label{sec:edg_sign_identifiability_results}
This section presents theoretical results on edge-sign identifiability. All proofs rely on the Lyapunov equation (Eq.~\eqref{eq:lyapunov}). Section~\ref{sec:general_sign_identifiability} establishes theorems that hold for arbitrary graphs $G$. Section~\ref{sec:classical_and_new_graph_structures} uses these theorems to analyse specific causal structures. We present results both with and without latent variables.

\subsection{Sign Identifiability in General Graphs}\label{sec:general_sign_identifiability}
We begin by presenting two lemmas and a theorem ($\mathcal{M}^0_e$ criterion) that establish sign identifiability results for a fixed covariance matrix $\Sigma$. We continue by presenting a theorem (graphical criterion) and a supporting lemma. The theorem establishes a graphical rule to determine identifiability for a given graph $G$ and edge $e$, i.e., all covariance matrices entailed by the graph $G$ in the possible edge set. This theorem is valid only for graphs without latent variables (see Remark~\ref{remark:no_latent_graphical_criterion}). We end by providing two examples of applying the graphical criterion to general structures.

\begin{lemma}\label{lem:m0_implications}
    Let $e \in E$ be an edge in a graph $G=(V,E)$. Then
    \begin{align}
        \Sigma\in \mathcal{M}^{+}_{G,e} \text{ and }\Sigma\in\mathcal{M}^{-}_{G,e} &\implies \Sigma\in \mathcal{M}^{0}_{G,e} \label{eq:m0_criterion_m+m-_implies_m0},\\
        \Sigma\in \mathcal{M}^{+}_{G,e}\text{ and }\Sigma\in\mathcal{M}^{0}_{G,e} &\implies \Sigma\in \mathcal{M}^{-}_{G,e}  \label{eq:m0_criterion_m+m0_implies_m-},\\
         \Sigma\in \mathcal{M}^{-}_{G,e}\text{ and }\Sigma\in\mathcal{M}^{0}_{G,e} &\implies \Sigma\in \mathcal{M}^{+}_{G,e}.  \label{eq:m0_criterion_m0m-_implies_m+}
    \end{align} 
\end{lemma}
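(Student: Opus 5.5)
The key observation is that, for a fixed $\Sigma$, the set of pairs $(A,D)$ solving the Lyapunov equation $A\Sigma+\Sigma A^T=-D$ is convex. The map $A\mapsto A\Sigma+\Sigma A^T$ is linear, so any convex combination of two solutions $(A^{(1)},D^{(1)})$ and $(A^{(2)},D^{(2)})$ is again a solution $(\lambda A^{(1)}+(1-\lambda)A^{(2)},\,\lambda D^{(1)}+(1-\lambda)D^{(2)})$. Moreover, $PDD_d$ is a convex cone, so the combined $D$ stays in $PDD_d$ for $\lambda\in[0,1]$. Membership $\Sigma\in F_G$ depends only on $\Sigma$ and $G$, so it is unaffected. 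All three implications should therefore follow from interpolating or extrapolating along segments in parameter space, while tracking the sign of $A_e$ and the support of $A$.

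For the first implication, take witnesses $(A^+,D^+)$ and $(A^-,D^-)$ with $\operatorname{supp}(A^\pm)=E$, $A^+_e>0$ and $A^-_e<0$. Choose $\lambda=A^-_e/(A^-_e-A^+_e)\in(0,1)$, so that the combination $A^\lambda=\lambda A^++(1-\lambda)A^-$ has $A^\lambda_e=0$. Then $D^\lambda\in PDD_d$, and $\operatorname{supp}(A^\lambda)\subseteq E$ with $e\notin\operatorname{supp}(A^\lambda)$, hence $\operatorname{supp}(A^\lambda)\subset E$ strictly. This gives $\Sigma\in\mathcal{M}^0_{G,e}$. Hurwitz stability needs no separate check, since $D^\lambda\in PD_d$ and $\Sigma\in PD_d$ force it by \citep[Theorem~1.1]{frommer2012verified}, as noted in Section~\ref{sec:background_sde}.

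For the second implication, take $(A^+,D^+)$ with full support $E$ and $A^+_e>0$, and $(A^0,D^0)$ with $\operatorname{supp}(A^0)\subset E$ and $A^0_e=0$. Extrapolate past $A^0$ by setting $A^t=A^0+t(A^0-A^+)$ and $D^t=D^0+t(D^0-D^+)$ for small $t>0$. Then $A^t_e=-tA^+_e<0$. Since $D^0$ is strictly positive diagonal, $D^t\in PDD_d$ for all sufficiently small $t$. This openness of $PDD_d$ inside the diagonal matrices is what makes extrapolation legitimate.

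The main obstacle is the support condition $\operatorname{supp}(A^t)=E$. Entries outside $E$ are zero in both $A^0$ and $A^+$, so they stay zero. For each $f\in E$, the entry $A^t_f=(1+t)A^0_f-tA^+_f$ is an affine function of $t$ that is not identically zero. It is nonzero at $t=0$ when $A^0_f\neq0$; when $A^0_f=0$ it equals $-tA^+_f\neq0$ for $t>0$. Each such function therefore vanishes at most at one point, so only finitely many $t$ values are excluded. Choosing $t>0$ small enough to keep $D^t\in PDD_d$ and avoid these finitely many zeros gives full support. Hence $\Sigma\in\mathcal{M}^-_{G,e}$. The third implication is the same argument with the roles of the signs swapped.
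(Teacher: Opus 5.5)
Your proof is correct and follows essentially the paper's argument: a convex combination with $A_e$ cancelled gives the first implication, and for the second the paper uses $-A^+ + bA^0$ with $b$ large, of which your $(1+t)A^0 - tA^+$ is just the positive rescaling by $t = 1/(b-1)$ (your condition on $D^t$ is exactly the paper's $b > \max_i D^+_{ii}/D^0_{ii}$). Your handling of the support condition, where each entry is a not-identically-zero affine function of $t$ so only finitely many values are excluded, is a slightly more careful version of the paper's ``choose $b$ sufficiently large.''
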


\textit{Proof sketch.}\quad The proof proceeds analogously for all implications. We select a covariance matrix $\Sigma$ in the intersection of the two signature sets on the left-hand side of the implication, which also ensures $\Sigma \in F_G$. For this $\Sigma$, we obtain two Lyapunov equations (Eq.~\eqref{eq:lyapunov}). Each can be rescaled by an arbitrary scalar $a \in \mathbb{R}$, and their sum remains a valid Lyapunov equation. Such a rescaling, however, need not correspond to a valid OU model. We therefore choose the rescaling so that $D \in PDD_d$. To satisfy Definition~\ref{def:edge_signature_set} for the signature set on the right-hand side of the implication, we additionally ensure that the rescaling preserves $\operatorname{supp}(A) = E$ and yields the required $\operatorname{sign}(A_e)$. The full proof is provided in Appendix~\ref{sec:appendix_lemma_m0_implications}.

\begin{remark}
    We emphasise that the key mechanism in the proof is the scale invariance of the Lyapunov equation: the drift matrix $A$ and the diffusion matrix $D$ can be rescaled while preserving both the OU model and the induced covariance matrix $\Sigma$. Whereas existing approaches eliminate this freedom by fixing the scale, we exploit it. The rescaling is therefore not merely a nuisance but a structural feature that is utilised in our sign identifiability results.
\end{remark}

\bigskip

\begin{theorem}[$\mathcal{M}^{0}$ Criterion]\label{thm:m0-criterion}
    Let $e\in E$ be an edge in graph $G=(V,E)$ and let $\Sigma\in\mathcal{M}^{p}_{G,e}$. Then
    \begin{equation}     \begin{split}
        \Sigma\in \mathcal{M}^{0}_{G,e} &\iff e \text{ is non-identifiable for }\, \Sigma.
    \end{split} \end{equation}
\end{theorem}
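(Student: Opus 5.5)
The theorem should follow directly from Lemma~\ref{lem:m0_implications} combined with Definition~\ref{def:pointwise_edge_sign_identifiability}. By that definition, for $\Sigma\in\mathcal{M}^{p}_{G,e}$, the sign of $e$ is non-identifiable for $\Sigma$ exactly when $\Sigma\in\mathcal{M}^{+}_{G,e}\cap\mathcal{M}^{-}_{G,e}$. So it suffices to show that, for $\Sigma\in\mathcal{M}^{p}_{G,e}$,
\begin{equation*}
\Sigma\in\mathcal{M}^{0}_{G,e}\iff \Sigma\in\mathcal{M}^{+}_{G,e}\cap\mathcal{M}^{-}_{G,e}.
\end{equation*}
I will prove the two directions separately.

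For ($\Leftarrow$), assume $\Sigma\in\mathcal{M}^{+}_{G,e}\cap\mathcal{M}^{-}_{G,e}$. Then implication~\eqref{eq:m0_criterion_m+m-_implies_m0} of Lemma~\ref{lem:m0_implications} gives $\Sigma\in\mathcal{M}^{0}_{G,e}$ at once.

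For ($\Rightarrow$), assume $\Sigma\in\mathcal{M}^{0}_{G,e}$. Since $\Sigma\in\mathcal{M}^{p}_{G,e}=\mathcal{M}^{+}_{G,e}\cup\mathcal{M}^{-}_{G,e}$, I split into two cases.
\begin{itemize}
\item If $\Sigma\in\mathcal{M}^{+}_{G,e}$, then $\Sigma\in\mathcal{M}^{+}_{G,e}\cap\mathcal{M}^{0}_{G,e}$, and implication~\eqref{eq:m0_criterion_m+m0_implies_m-} yields $\Sigma\in\mathcal{M}^{-}_{G,e}$. Hence $\Sigma$ lies in both signature sets.
\item If $\Sigma\in\mathcal{M}^{-}_{G,e}$, implication~\eqref{eq:m0_criterion_m0m-_implies_m+} yields $\Sigma\in\mathcal{M}^{+}_{G,e}$ in the same way.
\end{itemize}
In either case $\Sigma\in\mathcal{M}^{+}_{G,e}\cap\mathcal{M}^{-}_{G,e}$, so $e$ is non-identifiable for $\Sigma$.

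There is essentially no obstacle here. All the analytic content, namely the rescaled sums of Lyapunov equations that must keep $D\in PDD_d$, $\operatorname{supp}(A)=E$ and the prescribed sign, is contained in Lemma~\ref{lem:m0_implications}. The only point requiring care is the hypothesis $\Sigma\in\mathcal{M}^{p}_{G,e}$, which is used in ($\Rightarrow$). Without it, $\Sigma\in\mathcal{M}^{0}_{G,e}$ could hold while $\Sigma$ lies in neither $\mathcal{M}^{+}_{G,e}$ nor $\mathcal{M}^{-}_{G,e}$, and then the pointwise notion would not even be defined. The membership $\Sigma\in F_G$ is automatic because every signature set is contained in $F_G$.
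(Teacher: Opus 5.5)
Your proof is correct and follows essentially the same route as the paper: both combine the three implications of Lemma~\ref{lem:m0_implications} with the hypothesis $\Sigma\in\mathcal{M}^{p}_{G,e}$ and Definition~\ref{def:pointwise_edge_sign_identifiability}. The paper phrases this as eliminating five of the eight membership patterns for $(\mathcal{M}^0_e,\mathcal{M}^+_e,\mathcal{M}^-_e)$, whereas you argue the two directions directly, which is the same logic in a more compact form.
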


\begin{proof}
    In the proof, we always refer to the same graph $G=(V,E)$. For brevity, we will suppress the subscript $G$ and only write $\mathcal{M}^{k}_{e}$. There are eight possible combinations of edge signature set memberships for a given $\Sigma$:
\begin{enumerate}
\begin{multicols}{2}
    \item $\mathcal{M}^{0}_e\cap\neg\mathcal{M}^{+}_e\cap\neg\mathcal{M}^{-}_e,$
    \item $\mathcal{M}^{0}_e\cap\neg\mathcal{M}^{+}_e\cap\mathcal{M}^{-}_e,$
    \item $\mathcal{M}^{0}_e\cap\mathcal{M}^{+}_e\cap\neg\mathcal{M}^{-}_e,$
    \item $\mathcal{M}^{0}_e\cap\mathcal{M}^{+}_e\cap\mathcal{M}^{-}_e,$
    \item $\neg\mathcal{M}^{0}_e\cap\neg\mathcal{M}^{+}_e\cap\neg\mathcal{M}^{-}_e,$
    \item $\neg\mathcal{M}^{0}_e\cap\neg\mathcal{M}^{+}_e\cap\mathcal{M}^{-}_e,$
    \item $\neg\mathcal{M}^{0}_e\cap\mathcal{M}^{+}_e\cap\neg\mathcal{M}^{-}_e,$
    \item $\neg\mathcal{M}^{0}_e\cap\mathcal{M}^{+}_e\cap\mathcal{M}^{-}_e$,
\end{multicols}
\end{enumerate}
where we use the shorthand notation $\neg\mathcal{M}^{k}_e:=\{\Sigma\in\mathcal{M}^p_e:\Sigma\not\in\mathcal{M}^{k}_e\}$. Since $\Sigma\in\mathcal{M}^{p}_{e}$, we have that $\Sigma\not\in\neg\mathcal{M}^{0}_e\cap\neg\mathcal{M}^{+}_e\cap\neg\mathcal{M}^{-}_e$ and $\Sigma\not\in\mathcal{M}^{0}_e\cap\neg\mathcal{M}^{+}_e\cap\neg\mathcal{M}^{-}_e$. Moreover, Eq.~\eqref{eq:m0_criterion_m+m-_implies_m0} in Lemma~\ref{lem:m0_implications} yields $\Sigma\not\in \neg\mathcal{M}^{0}_e\cap\mathcal{M}^{+}_e\cap\mathcal{M}^{-}_e$. Similarly, Eq.~\eqref{eq:m0_criterion_m+m0_implies_m-}, implies $\Sigma\not\in\mathcal{M}^{0}_e\cap\mathcal{M}^{+}_e\cap\neg\mathcal{M}^{-}_e$, and Eq.~\eqref{eq:m0_criterion_m0m-_implies_m+}, implies $\Sigma\not\in\mathcal{M}^{0}_e\cap\neg\mathcal{M}^{+}_e\cap\mathcal{M}^{-}_e$.
This leaves three possible edge signature set combinations for $\Sigma$,
\begin{itemize}
\begin{multicols}{2}
    \item $\neg\mathcal{M}^{0}_e\cap\neg\mathcal{M}^{+}_e\cap\mathcal{M}^{-}_e$
    \item $\neg\mathcal{M}^{0}_e\cap\mathcal{M}^{+}_e\cap\neg\mathcal{M}^{-}_e$
    \item $\mathcal{M}^{0}_e\cap\mathcal{M}^{+}_e\cap\mathcal{M}^{-}_e$
\end{multicols}
\end{itemize}
Among the remaining combinations, if $\Sigma\in\mathcal{M}^{0}_e$, then $\Sigma\in~\mathcal{M}^{+}_e\cap\mathcal{M}^{-}_e$. In addition, if $\Sigma\in\neg\mathcal{M}^{0}_e$, then $\Sigma\not\in\mathcal{M}^{+}_e\cap\mathcal{M}^{-}_e$. Therefore, based on the pointwise edge-sign identifiability Definition~\ref{def:pointwise_edge_sign_identifiability}, $e$ is non-identifiable for $\Sigma\in\mathcal{M}^p_{e}$ if and only if $\Sigma\in\mathcal{M}^{0}_e$. 
\end{proof}

\begin{lemma}\label{lem:extending_m0}
    If Definition~\ref{def:edge_signature_set} is modified by replacing $D \in PDD_d$ with $D \in PD_d$, then Lemma~\ref{lem:m0_implications} and Theorem~\ref{thm:m0-criterion} remain valid.
\end{lemma}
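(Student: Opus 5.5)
The plan is to re-run the proof of Lemma~\ref{lem:m0_implications} with $PD_d$ in place of $PDD_d$, and to check that every step that used diagonality of $D$ actually used only the convex-cone structure of the admissible diffusion set. Theorem~\ref{thm:m0-criterion} will then carry over verbatim: its proof is a purely set-theoretic case analysis on the eight membership combinations. That analysis uses only Lemma~\ref{lem:m0_implications} and the definition of $\mathcal{M}^p_{G,e}$, and never the form of $D$.

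I first record the mechanism of Lemma~\ref{lem:m0_implications}. For a fixed $\Sigma$, take two solutions $(A_1,D_1)$ and $(A_2,D_2)$ of the Lyapunov equation from the two signature sets on the left-hand side. For scalars $a_1,a_2$, the pair $(a_1A_1+a_2A_2,\, a_1D_1+a_2D_2)$ again solves $A\Sigma+\Sigma A^T=-D$, by linearity.

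\textbf{Case $\mathcal{M}^+\cap\mathcal{M}^-\Rightarrow\mathcal{M}^0$.} Take $a_1,a_2>0$ with $a_1(A_1)_e+a_2(A_2)_e=0$. Then the new diffusion matrix $a_1D_1+a_2D_2$ is a positive combination of positive definite matrices. Since $PD_d$ is a convex cone, exactly as $PDD_d$ is, it lies in $PD_d$. The support of the new drift matrix is contained in $E$, and its $e$-entry vanishes. Finally, $\Sigma\in F_G$ is unchanged, because $F_G$ does not depend on $D$.

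\textbf{Cases $\mathcal{M}^{\pm}\cap\mathcal{M}^0\Rightarrow\mathcal{M}^{\mp}$.} Here one combines the $\mathcal{M}^{0}$ solution $(A_0,D_0)$ with a small coefficient of the other sign: take $(A_0+\varepsilon' A_\pm,\, D_0+\varepsilon' D_\pm)$ with $\varepsilon'<0$ small in absolute value (in the original proof, presumably, equivalently $(A_0-\varepsilon A_\pm, D_0-\varepsilon D_\pm)$). The $e$-entry then has the opposite sign, and $\varepsilon$ can be chosen so that no other entry of $E$ cancels, since only finitely many values of $\varepsilon$ are bad. Support exactly $E$ is obtained for generic small $\varepsilon$: entries in $E\setminus\operatorname{supp}(A_0)$ become $-\varepsilon(A_\pm)_{e'}\neq 0$, and entries in $\operatorname{supp}(A_0)$ stay nonzero for small $\varepsilon$.

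The only step where diagonality might have been used is positivity of $D_0-\varepsilon D_\pm$. Here I will use openness of $PD_d$ in the space of symmetric matrices: for $D_0\in PD_d$, we have $D_0-\varepsilon D_\pm\in PD_d$ whenever $\varepsilon\|D_\pm\|<\lambda_{\min}(D_0)$. This is the same argument as in the diagonal case, where one uses $\min_i (D_0)_{ii}$. It is also where the orders of limits must be checked: $\varepsilon$ must be simultaneously small enough for positive definiteness and for support preservation, and must avoid finitely many cancellation values. This is a finite intersection of nonempty open conditions, so it is satisfiable.

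The main obstacle I anticipate is confirming that the appendix proof of Lemma~\ref{lem:m0_implications} nowhere uses entrywise properties of diagonal $D$, for example comparing diagonal entries individually or arguing that off-diagonal entries vanish. If it does, I will replace each such argument by the two cone-level facts above. Those facts are: $PD_d$ is closed under positive combinations, and $PD_d$ is open, with a uniform margin given by $\lambda_{\min}$. Both hold for $PD_d$ as well as for $PDD_d$, now within the symmetric matrices rather than the diagonal ones. The dependence on $F_G$ also needs checking. The definition of $F_G$ involves only $\Sigma$ and ancestral sets of $G$, not $D$. So even though Boege et al.'s marginal-independence characterization was derived for diagonal $D$, membership in $F_G$ is simply inherited from the hypothesis $\Sigma$ in the left-hand-side sets. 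With the lemma established, the proof of Theorem~\ref{thm:m0-criterion} applies word for word.
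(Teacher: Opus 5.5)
Your proposal is correct and is essentially the paper's proof: a convex combination handles $\mathcal{M}^+\cap\mathcal{M}^-\Rightarrow\mathcal{M}^0$, and the paper's choice $-A^++bA^0$ with $b>\lambda_{\max}(D^+)/\lambda_{\min}(D^0)$ is a positive multiple of your $A_0-\varepsilon A_\pm$ with $\varepsilon=1/b$, after which Theorem~\ref{thm:m0-criterion} carries over verbatim. The only difference is that the paper also replaces $\Sigma\in F_G$ by $\Sigma\in PD_d$ in the modified definition, because the $t$-faithfulness characterisation of \cite{boege2025conditional} assumes diagonal $D$, whereas you keep $F_G$; since that constraint involves only $\Sigma$ and is inherited from the hypotheses, your argument works under either reading.
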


\textit{Proof sketch.}\quad We redefine the edge signature sets by allowing $D \in PD_d$ instead of $D \in PDD_d$ (see Definition~\ref{def:pd_edge_signature_set}). Since \cite{boege2025conditional} require $D$ to be diagonal, we can no longer use their result. Therefore, we impose $\Sigma \in PD_d$ rather than $\Sigma \in F_G$. The proof of Lemma~\ref{lem:m0_implications} is analogous, except that establishing the existence of a rescaling with $D \in PD_d$ is slightly more involved. With this redefinition and the corresponding extension of Lemma~\ref{lem:m0_implications} to $D \in PD_d$, the proof of Theorem~\ref{thm:m0-criterion} proceeds unchanged. The full proof is provided in Appendix~\ref{sec:appendix_extending_m0}.

To verify the $\mathcal{M}^{0}$ criterion (Theorem~\ref{thm:m0-criterion}) in practice, it is useful to note that testing membership of an edge signature set can be formulated as a linear optimisation problem. See Remark~\ref{remark:feasability_problem} for a more detailed description. In addition, the $\mathcal{M}^{0}$ criterion is also a powerful theoretical tool, as will be showcased in the proof of Theorem~\ref{thm:graphical_criterion}. Before we can proceed to the theorem, we introduce the following lemma, which is essential to its proof.

 \begin{lemma}\label{lem:edge_deletion_Fg}
Let $G=(V,E)$, $G'=(V,E')$ and $G''=(V,E'')$ with $E''\subset E'\subset E$, if $F_G \neq F_{G'}$ then  $F_{G}\neq F_{G''}$.
\end{lemma}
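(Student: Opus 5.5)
The plan is to reduce everything to the off-diagonal zero pattern that $t$-faithfulness imposes, and then to use the fact that ancestral sets can only shrink when edges are deleted. For a graph $H=(V,E_H)$, let
\[
Z_H:=\{\{i,j\}: i\neq j,\ \operatorname{An}_H(V_i)\cap\operatorname{An}_H(V_j)=\emptyset\}.
\]
By Definition~\ref{def:m-faithful}, $F_H$ is exactly the set of $\Sigma\in PD_d$ whose vanishing off-diagonal entries are precisely those indexed by $Z_H$. Diagonal entries of a positive definite matrix are nonzero, so they impose no constraint.

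\textbf{Step 1: $F_H$ determines $Z_H$, and conversely.} First I show that $F_H$ is nonempty for every zero pattern $Z$. Take $\Sigma=I+\varepsilon B$, where $B$ is symmetric with $B_{ij}=1$ if $\{i,j\}\notin Z$ and $B_{ij}=0$ otherwise, and $\varepsilon>0$ is small. This matrix is positive definite and has exactly the prescribed zeros. Second, two distinct patterns $Z\neq Z'$ give disjoint (hence distinct) sets of matrices, since a matrix cannot have a zero and a nonzero in the same position. Together these give $F_H=F_{H'}\iff Z_H=Z_{H'}$.

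\textbf{Step 2: monotonicity.} If $E_1\subseteq E_2$, every directed path in $(V,E_1)$ is also a directed path in $(V,E_2)$. Hence $\operatorname{An}_{(V,E_1)}(V_i)\subseteq\operatorname{An}_{(V,E_2)}(V_i)$ for all $i$. Disjointness of ancestral sets in the larger graph therefore implies disjointness in the smaller one, i.e.\ $Z_{(V,E_2)}\subseteq Z_{(V,E_1)}$. Applied to $E''\subset E'\subset E$, this gives
\[
Z_G\subseteq Z_{G'}\subseteq Z_{G''}.
\]

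\textbf{Step 3: conclusion.} Suppose $F_G\neq F_{G'}$. By Step 1, $Z_G\neq Z_{G'}$. Since $Z_G\subseteq Z_{G'}$, there is a pair $\{i,j\}\in Z_{G'}\setminus Z_G$. By Step 2 this pair also lies in $Z_{G''}$, so $\{i,j\}\in Z_{G''}\setminus Z_G$ and $Z_G\neq Z_{G''}$. Step 1 then yields $F_G\neq F_{G''}$.

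The only step with any content is Step 1, specifically the nonemptiness of $F_H$ for an arbitrary pattern. Without it, two different patterns could both yield empty sets and hence equal $F$'s. The construction $I+\varepsilon B$ with $\varepsilon<1/(d-1)$ settles this by diagonal dominance. Steps 2 and 3 are routine.
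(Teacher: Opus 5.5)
Your proposal is correct and follows the same route as the paper. Both proofs use the monotonicity $MI_G\subseteq MI_{G'}\subseteq MI_{G''}$ of marginally independent pairs under edge deletion, together with the correspondence between $F_H$ and the zero pattern; the difference is that you prove, via the nonemptiness construction $I+\varepsilon B$, the step the paper only asserts (that graphs with equal $t$-faithful sets must have $MI_H=MI_{H'}$).
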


\textit{Proof sketch.}\quad Observe that the number of marginally independent pairs of nodes can only increase due to edge deletion. Therefore, if a graph $G'$ is a graph with deleted edges with respect to a graph $G$, then it can only have the same or more marginally independent pairs of nodes. Furthermore, the $t$-faithful sets $F_G$ and $F_{G'}$ are only equal if they have the same marginally independent pairs of nodes. Using these observations twice, once for $G$ and $G'$ and once for $G'$ and $G''$, we obtain Lemma~\ref{lem:edge_deletion_Fg}. For the detailed proof, see Appendix~\ref{app:proof_edge_deletion_Fg}.

\begin{theorem}[Graphical Criterion]\label{thm:graphical_criterion}
    Without latent variables, let $G=(V,E)$ be a graph with an edge $e\in E$. Define $G'=(V,E\setminus\{e\})$. Then the edge $e$ is identifiable if the entailed marginal independencies by $G$ and $G'$ differ, i.e., if there exist $V_i,V_j\in V$ such that 
    \begin{equation}
        \operatorname{An}_{G'}(V_i)\cap\operatorname{An}_{G'}(V_j)=\emptyset\neq\operatorname{An}_G(V_i)\cap\operatorname{An}_G(V_j).
    \end{equation}
\end{theorem}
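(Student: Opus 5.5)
Via the $\mathcal{M}^{0}$ criterion (Theorem~\ref{thm:m0-criterion}), identifiability reduces to showing that no $t$-faithful covariance matrix for $G$ lies in $\mathcal{M}^{0}_{G,e}$. Assume there are $V_i,V_j$ with $\operatorname{An}_{G'}(V_i)\cap\operatorname{An}_{G'}(V_j)=\emptyset\neq\operatorname{An}_G(V_i)\cap\operatorname{An}_G(V_j)$. I will prove $\mathcal{M}^{+}_{G,e}\cap\mathcal{M}^{-}_{G,e}=\emptyset$. For the nonemptiness clause in Definition~\ref{def:edge_identifiability}, I note that some Hurwitz $A$ with $\operatorname{supp}(A)=E$ and $D\in PDD_d$ yields a $t$-faithful $\Sigma$; this is the generic situation. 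If needed, I would simply take it as a standing assumption that $\mathcal{M}^p_{G,e}\neq\emptyset$. By Lemma~\ref{lem:m0_implications}, Eq.~\eqref{eq:m0_criterion_m+m-_implies_m0}, any $\Sigma$ in the intersection lies in $\mathcal{M}^{0}_{G,e}$. So it suffices to show that $\mathcal{M}^{0}_{G,e}\cap F_G=\emptyset$; recall $\mathcal{M}^{0}_{G,e}\subseteq F_G$ by definition.

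Suppose, for contradiction, that $\Sigma\in\mathcal{M}^{0}_{G,e}$. Then there are $A,D$ with $D\in PDD_d$, $A\Sigma+\Sigma A^T=-D$, $\operatorname{supp}(A)\subset E$ and $A_e=0$. Let $E''=\operatorname{supp}(A)\subseteq E\setminus\{e\}=E'$, and set $G''=(V,E'')$. Since $\Sigma\in PD_d$ and $D\in PD_d$, $A$ is Hurwitz stable, so $\Sigma$ is the stationary covariance of a genuine OU process whose drift respects $G''$. By the marginal independence result of \cite[Theorem~1.2]{boege2025conditional}, applied to $G''$, every pair $V_k,V_l$ with $\operatorname{An}_{G''}(V_k)\cap\operatorname{An}_{G''}(V_l)=\emptyset$ has $\Sigma_{kl}=0$. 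The direction used here is the "ancestrally separated implies independent" direction, which holds for every parametrisation with $\operatorname{supp}(A)\subseteq E''$ and diagonal $D$, not only generically.

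Since $E''\subseteq E'$, ancestral sets in $G''$ are contained in those in $G'$. Hence $\operatorname{An}_{G''}(V_i)\cap\operatorname{An}_{G''}(V_j)=\emptyset$, and therefore $\Sigma_{ij}=0$. On the other hand, $\Sigma\in F_G$ and $\operatorname{An}_G(V_i)\cap\operatorname{An}_G(V_j)\neq\emptyset$, so $t$-faithfulness (Definition~\ref{def:m-faithful}) forces $\Sigma_{ij}\neq0$. This is a contradiction. This monotonicity step is essentially the content of Lemma~\ref{lem:edge_deletion_Fg}, which I would cite to pass from $G'$ to an arbitrary subgraph $G''$. Thus $\mathcal{M}^{0}_{G,e}=\emptyset$, so no $\Sigma\in\mathcal{M}^p_{G,e}$ is non-identifiable by Theorem~\ref{thm:m0-criterion}, and $\mathcal{M}^{+}_{G,e}\cap\mathcal{M}^{-}_{G,e}=\emptyset$.

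The main obstacle is the careful use of the Boege et al.\ criterion. I must make sure it applies to a drift whose support is strictly smaller than $E'$; some zero entries may even include diagonal self-loops. I also must make sure the implication "no common ancestor $\Rightarrow \Sigma_{ij}=0$" holds for every stable parametrisation, not only generically. I would verify this directly by writing $\Sigma=\int_0^\infty e^{tA}De^{tA^T}\,dt$. The $(i,j)$ entry is a sum over pairs of walks from common sources $k$ to $i$ and $j$, weighted by $D_{kk}$, and it vanishes when no common ancestor exists. The assumption of no latent variables is exactly what makes $D$ diagonal here, which this step needs.
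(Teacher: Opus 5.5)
Your proposal is correct and follows the same route as the paper. Any $\Sigma\in\mathcal{M}^{0}_{G,e}$ would come from a drift supported in $E'=E\setminus\{e\}$, which forces $\Sigma_{ij}=0$ for the pair separated in $G'$ and contradicts $t$-faithfulness to $G$; hence $\mathcal{M}^{0}_{G,e}=\emptyset$ and identifiability follows from the $\mathcal{M}^{0}$ machinery. Your pairwise argument, using monotonicity of ancestral sets together with the walk expansion of $\Sigma=\int_0^\infty e^{tA}De^{tA^T}\,dt$, is a tighter version of the paper's step through Lemma~\ref{lem:edge_deletion_Fg}: that lemma's conclusion $F_G\neq F_{G''}$ does not by itself exclude a particular $\Sigma$ from $F_G$. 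Your explicit caveat that $\mathcal{M}^{p}_{G,e}\neq\emptyset$ is needed for the definition of identifiability also covers a point the paper leaves implicit.
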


\begin{proof}
Let $G=(V,E)$ be a graph with an edge $e$ and $G'(V,E\setminus\{e\})$ be a graph such that the entailed marginal independencies of the graphs $G$ and $G'$ are different. Then setting edge $e=0$, i.e., removing it from graph $G$, results in graph $G'$ and the edge signature set $\mathcal{M}^{0}_{G,e}$ (see Definition~\ref{def:edge_signature_set}). Let $\Sigma\in\mathcal{M}^{0}_{G,e}$, then by definition of $\mathcal{M}^{0}_{G,e}$, we have that $\Sigma\in F_G$ and $\Sigma$ satisfies the Lyapunov equation for some drift matrix $A$. Since this drift matrix $A$ used to generate $\mathcal{M}^{0}_{G,e}$ only requires $\operatorname{supp}(A) \subset E$, it follows that $\operatorname{supp}(A) \subseteq E'$. Starting with $\operatorname{supp}(A) = E'$, we know that the marginal independencies are not the same. Therefore, if $\Sigma$ is generated via $\operatorname{supp}(A) = E'$ then $\Sigma\not\in F_G$. Using Lem.~\ref{lem:edge_deletion_Fg}, we then know that for all graphs $G''=(V,E'')$ with $E''\subseteq E'$ we have that $F_G\neq F_{G''}$. Hence, for any $\Sigma$ generated via $\operatorname{supp}(A) \subseteq E'$ we have that $\Sigma\not\in F_G$. Therefore, by the definition of $\mathcal{M}^{0}_{G,e}$ requiring $\Sigma\in F_G$, we have that $\mathcal{M}^{0}_{G,e}=\emptyset$. Hence, using the $\mathcal{M}^0$-criterion (Theorem~\ref{thm:m0-criterion}), we have that for all $\Sigma\in F_G$, edge $e$ in $G$ is identifiable. Since $\mathcal{M}^{p}_{G,e}\subseteq F_{G}$, we have that $e$ is identifiable in graph $G$ and the proof is complete.
\end{proof}

\begin{remark}\label{remark:no_latent_graphical_criterion}
    If a graph $G$ contains latent variables, the covariance matrix can be written as  $\Sigma=\big[\Sigma_{hh},\,\Sigma_{ho};\, \Sigma_{oh},\,\Sigma_{oo}\big]$, where $o$ denotes observable and $h$ denotes hidden. In this case, the observed block $\Sigma_{oo}$ constrains only part of the full covariance matrix. The observed covariance induces the set $\Sigma_{\mathrm{set}}:=\{\Sigma'\in \mathcal{M}^p_{G,e};\, \Sigma'_{oo}=\Sigma_{oo}\}\,$ consisting of all covariance matrices compatible with $G$ that agree on the observable block. It may then occur, even if all $\Sigma'\in\Sigma_{\mathrm{set}}$ are identifiable, that $\Sigma_{\mathrm{set}}\cap\mathcal{M}^+_{G,e}\neq\emptyset$ and $\Sigma_{\mathrm{set}}\cap\mathcal{M}^-_{G,e}\neq\emptyset$. Therefore, in that scenario $\Sigma_{\mathrm{set}}$ is not restricted to a single sign.
    In the sense of Definition~\ref{def:edge_identifiability}, this implies non-identifiability. Hence, $\Sigma_{oo}$ alone, even together with the observation that each $\Sigma\in \mathcal{M}^p_{G,e}$ is identifiable, is insufficient to conclude that the edge $e$ is sign identifiable. For example, we refer to the proof in Appendix~\ref{sec:appendix_latent_cause_n_effect}.
\end{remark}

To show the applicability of the graphical criterion (Theorem~\ref{thm:graphical_criterion}), we provide two examples of general applications of the theorem.

\begin{example}[Directed Tree]
    Any directed tree graph $G=(V,E)$ has the property that while $G$ is connected, removing any edge $e\in E$ makes the graph disconnected. Therefore, the deletion of any edge $e$, will change the marginal independences. Hence, using the graphical criterion, we have that all edges are identifiable. For examples of directed trees, see Figures~\ref{fig:hy} and~\ref{fig:chain}.
\end{example}

\begin{example}[General Instrumental Variable]
    Let $G=(V,E)$ be a graph with distinct nodes $i,j,k\in V$. For any edge $i\rightarrow j\in E$, if there exists a node $k$ that is an ancestor of $i$ and it has only one directed path to $j$ (through $i$), then removing edge $i\rightarrow j$ will create a new marginal independence. Hence, by the graphical criterion, we have that the edge $i\rightarrow j$ is identifiable. Note that $k$ functions similarly to an instrumental variable, but in contrast to an instrumental variable (IV), it can have a distance of more than one edge to the node $i$. For standard IV examples, see Figure~\ref{fig:iv} and~\ref{fig:iv_cyclic}.
\end{example}

\subsection{Classical and Novel Graph Structures}\label{sec:classical_and_new_graph_structures}
In this section, we study edge-sign identifiability for specific causal graphs. These graphs are analogous to those common in the acyclic SCM literature (e.g., instrumental variable and confounding settings), as well as novel graphs that allow cycles beyond self-loops.
The graphs we study are shown in Fig.~\ref{fig:graphs}. Each variable has a self-loop, but this has been suppressed in the figures for readability.  We are always interested in the sign of the (red) edge $\alpha$ (also indicated in the figures). For the graphs in Figs.~\ref{fig:hy}--\ref{fig:iv_cyclic}, we provide theoretical guarantees on whether the sign of the red edge $\alpha$ is identifiable. The last three graphs, Fig.~\ref{fig:one_proxy}--\ref{fig:two_proxy_circulair}, are studied numerically in the next section.

Note that in the case of the instrumental variable (IV) and the (one) proxy variables, the edge of interest corresponds to the common edge of interest in the literature. 
In Section~\ref{sec:latent} (the latent variable case), we consider the variable $H$ in the graphs of Fig.~\ref{fig:graphs} to be hidden.

\begin{figure*}
\centering

\begin{subfigure}{0.18\textwidth}
    \centering
    \includegraphics[width=\linewidth]{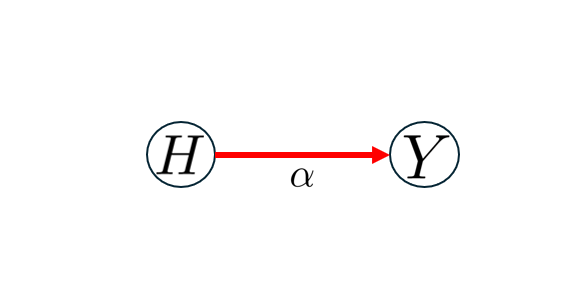}
    \caption{Cause and effect}
    \label{fig:hy}
\end{subfigure}
\hfill
\begin{subfigure}{0.18\textwidth}
    \centering
    \includegraphics[width=\linewidth]{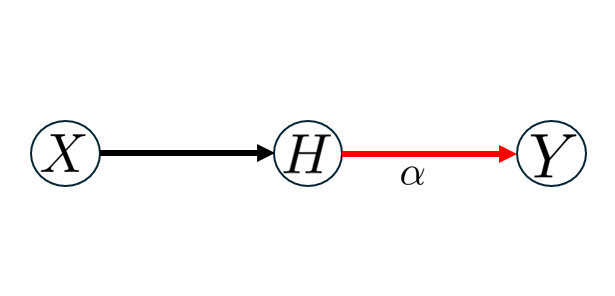}
    \caption{Chain}
    \label{fig:chain}
\end{subfigure}
\hfill
\begin{subfigure}{0.18\textwidth}
    \centering
    \includegraphics[width=\linewidth]{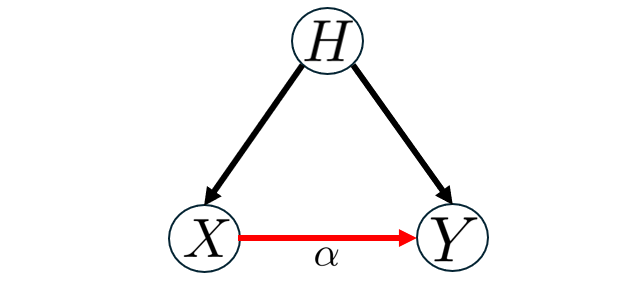}
    \caption{Confounding}
    \label{fig:confounding}
\end{subfigure}
\hfill
\begin{subfigure}{0.18\textwidth}
    \centering
    \includegraphics[width=\linewidth]{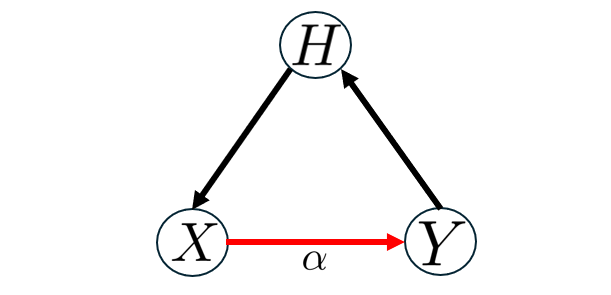}
    \caption{Cycle of length 3}
    \label{fig:three_loop}
\end{subfigure}

\begin{subfigure}{0.18\textwidth}
    \centering
    \includegraphics[width=\linewidth]{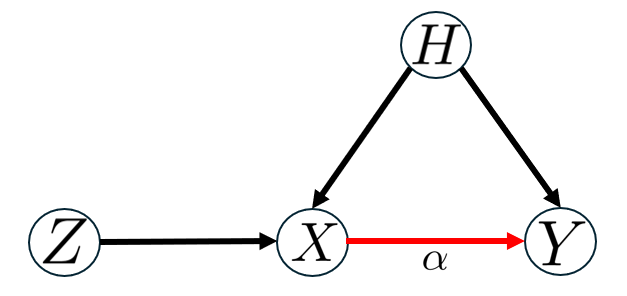}
    \caption{Instrumental variable}
    \label{fig:iv}
\end{subfigure}
\hfill
\begin{subfigure}{0.18\textwidth}
    \centering
    \includegraphics[width=\linewidth]{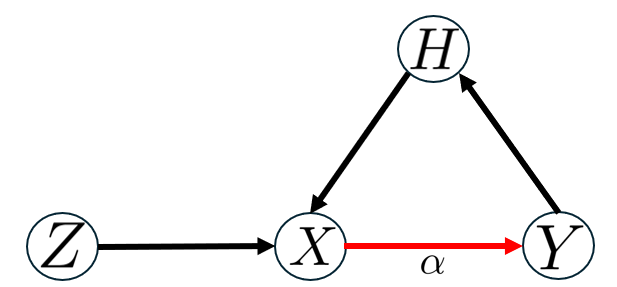}
    \caption{Cycle with IV}
    \label{fig:iv_cyclic}
\end{subfigure}
\hfill
\begin{subfigure}{0.18\textwidth}
    \centering
    \includegraphics[width=\linewidth]{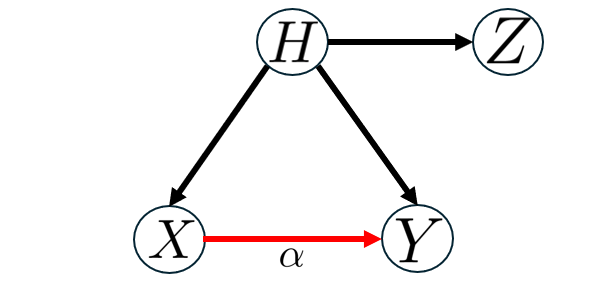}
    \caption{One proxy}
    \label{fig:one_proxy}
\end{subfigure}
\hfill
\begin{subfigure}{0.18\textwidth}
    \centering
    \includegraphics[width=\linewidth]{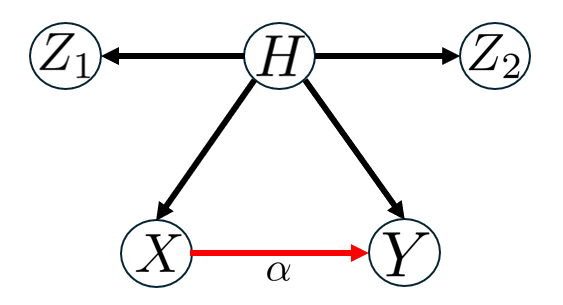}
    \caption{Two proxies}
    \label{fig:two_proxy}
\end{subfigure}
\hfill
\begin{subfigure}{0.18\textwidth}
    \centering
    \includegraphics[width=\linewidth]{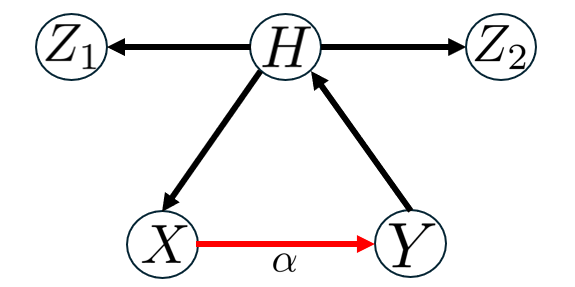}
    \caption{Cycle with proxies}
    \label{fig:two_proxy_circulair}
\end{subfigure}

\caption{Nine considered causal structures. The red edge $\alpha$ under consideration is indicated by $\alpha$. Figures (a)-(g) are discussed in both Section~\ref{sec:classical_and_new_graph_structures} and Section~\ref{sec:numerical_results}. The node $H$ is considered observable and latent in Section~\ref{sec:no_latent} and Section~\ref{sec:latent}, respectively. Figures~(h) and (i) are only studied numerically in Section~\ref{sec:numerical_results}.}
\label{fig:graphs}
\end{figure*}

\subsubsection{Without Latent Variables}\label{sec:no_latent}
In Theorem~\ref{thm:no_latent_id}, we characterize the edge-sign identifiability of $\alpha$ for the graphs in Fig.~\ref{fig:graphs} (Subfigures~\ref{fig:hy}--\ref{fig:iv_cyclic}) under $t$-faithfulness. Using the Lyapunov equation (Eq.~\eqref{eq:lyapunov}) and the $\mathcal{M}^0$-criterion (Theorem~\ref{thm:m0-criterion}), we derive algebraic constraints that characterise the sign identifiability in terms of covariance matrices $\Sigma \in \mathcal{M}^p_{G,\alpha}$. Specifically, this analysis separates the three identifiability categories (see Theorem~\ref{thm:no_latent_id}), specifies when a partially identifiable $\alpha$ becomes identifiable (see Lemma~\ref{lem:no_latent_part_id_values}), and yields an explicit formula for $\operatorname{sign}(\alpha)$ as a function of $\Sigma$ (see Lemma~\ref{lem:no_latent_id_values}) for some causal graphs. The drawback of using the $\mathcal{M}^0$-criterion is that it can be algebraically involved to show whether $\Sigma \in \mathcal{M}^{0}_{G,e}$. 

Theorem~\ref{thm:identifiability} establishes identifiability of $\alpha$ for the same graphs via a purely graphical argument. In contrast to Theorem~\ref{thm:no_latent_id}, it neither distinguishes between non- and partial identifiability nor characterises the conditions in terms of $\Sigma \in \mathcal{M}^p_{G,\alpha}$. However, its proof follows directly from the graphical criterion (Theorem~\ref{thm:graphical_criterion}) and is therefore substantially simpler than the proof of Theorem~\ref{thm:no_latent_id}.

\begin{theorem}[Edge-Sign Identifiability without Latent Variables]\label{thm:no_latent_id}
    For the red edge $\alpha$ in the graphs in Figs.~\ref{fig:hy} up to~\ref{fig:iv_cyclic} under the $t$-faithfulness assumption, the sign of $\alpha$ is:
    \begin{itemize}
        \item partially identifiable for~\ref{fig:confounding} and~\ref{fig:three_loop},
        \item identifiable for~\ref{fig:hy},\ref{fig:chain},\ref{fig:iv} and~\ref{fig:iv_cyclic}.
    \end{itemize}
\end{theorem}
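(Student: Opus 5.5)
We split the statement into its identifiable and partially identifiable parts. For the identifiable cases, Theorem~\ref{thm:graphical_criterion} does all the work. In the cause--effect graph (Fig.~\ref{fig:hy}) and the chain (Fig.~\ref{fig:chain}), the graph with self-loops removed is a directed tree, so deleting $\alpha$ disconnects its two endpoints. This creates a new marginal independence, as in the Directed Tree example. In the instrumental variable graph (Fig.~\ref{fig:iv}) and the cycle with IV (Fig.~\ref{fig:iv_cyclic}), the instrument is an ancestor of the tail of $\alpha$. Its only directed path to the head of $\alpha$ runs through $\alpha$. Deleting $\alpha$ therefore makes the instrument and the head of $\alpha$ ancestrally disjoint, and the General Instrumental Variable example applies. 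We must still check that $\mathcal{M}^p_{G,\alpha}\neq\emptyset$, which the definition of identifiability requires. We will exhibit one Hurwitz stable $A$ with $\operatorname{supp}(A)=E$ (for instance, a strongly diagonally dominant choice) and $D=I$. We then verify that the resulting $\Sigma$ has no zero off-diagonal entries where the graph has a common ancestor; generic small edge weights should suffice.

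For the confounding graph (Fig.~\ref{fig:confounding}) and the 3-cycle (Fig.~\ref{fig:three_loop}), Definition~\ref{def:edge_identifiability} requires two things. First, $\mathcal{M}^+_{G,\alpha}\cap\mathcal{M}^-_{G,\alpha}\neq\emptyset$. Second, $\mathcal{M}^+_{G,\alpha}\neq\mathcal{M}^-_{G,\alpha}$. We reduce both to statements about $\mathcal{M}^0_{G,\alpha}$ via Lemma~\ref{lem:m0_implications} and Theorem~\ref{thm:m0-criterion}.

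For nonemptiness of the intersection, we take a model with $A_\alpha=0$ and every other edge present. In both graphs, deleting $\alpha$ does not change ancestral overlaps: $H$ is still a common ancestor in the confounder case, and the remaining path of the cycle still connects everything. So the induced $\Sigma$ lies in $F_G$, hence in $\mathcal{M}^0_{G,\alpha}$. Next, we perturb the drift within this $\Sigma$ to obtain $\Sigma\in\mathcal{M}^+_{G,\alpha}$ as well. With $\Sigma$ fixed, the Lyapunov equation is linear in $A$. The constraints that $D$ be diagonal give three homogeneous linear equations for $d=3$. The drift has five free entries in the confounder graph and six in the 3-cycle (self-loops included), so the solution space has dimension at least $2$. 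We aim to show it contains a direction with $A_\alpha\neq 0$. We then add a small multiple of that direction to the $\alpha$-free solution; this keeps $D$ positive diagonal and the support equal to $E$. Lemma~\ref{lem:m0_implications} then supplies the opposite sign.

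For $\mathcal{M}^+\neq\mathcal{M}^-$, we must produce some $\Sigma\in\mathcal{M}^+_{G,\alpha}\setminus\mathcal{M}^-_{G,\alpha}$. By Theorem~\ref{thm:m0-criterion}, this is equivalent to finding $\Sigma\in\mathcal{M}^+_{G,\alpha}$ that is not in $\mathcal{M}^0_{G,\alpha}$. We will write out the linear system for the entries of $A$ given $\Sigma$. Setting $A_\alpha=0$ leaves three homogeneous equations in four unknowns (confounder) or five unknowns (cycle). We will show that, for a suitable $\Sigma$ generated with $A_\alpha>0$, every solution of this reduced system violates a sign requirement. The requirements are $D_{ii}>0$, i.e.\ $-(A\Sigma+\Sigma A^T)_{ii}>0$, together with nonzero remaining edges. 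Concretely, we will solve for the off-$\alpha$ entries up to scale and show that the diagonal of $D$ cannot be positive in every coordinate.

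The main obstacle is this last step. We need an explicit witness $\Sigma$ and a clean sign argument excluding every point of the solution cone, rather than a generic dimension count. We would first try a nearly diagonal $\Sigma$ with small off-diagonal entries of chosen signs, where the reduced system can be solved perturbatively. If that fails, we would fall back on a fully numerical rational $\Sigma$ and verify infeasibility through the linear program of Remark~\ref{remark:feasability_problem}, using a Farkas certificate.
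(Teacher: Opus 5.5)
Your proposal is correct in outline. It differs from the paper's proof in both halves.

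\textbf{Identifiable graphs.} For Figs.~\ref{fig:hy}, \ref{fig:chain}, \ref{fig:iv} and \ref{fig:iv_cyclic}, the paper solves one off-diagonal Lyapunov equation for $\alpha$; in Fig.~\ref{fig:hy} this gives $\alpha=-(s_h+s_y)\sigma_{hy}/\sigma_{hh}$, with $s_h,s_y<0$ by triangularity. This forces $\alpha\neq0$, so $\mathcal{M}^0_{G,\alpha}=\emptyset$, and it also produces the sign formulas of Lemma~\ref{lem:no_latent_id_values}. You use the graphical criterion instead, which is the paper's separate Theorem~\ref{thm:identifiability}. You also add the check $\mathcal{M}^p_{G,\alpha}\neq\emptyset$, which Definition~\ref{def:edge_identifiability} requires but the paper never makes. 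One caveat: inferring ancestral disjointness from ``the only directed path from $Z$ to $Y$ uses $\alpha$'' is valid only because $Z$ has no parents, which holds in both IV graphs. Say so explicitly.

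\textbf{Partially identifiable graphs.} For Figs.~\ref{fig:confounding} and \ref{fig:three_loop}, the paper takes the existence of both identifiable and non-identifiable $\Sigma$ from its numerical experiments. It then characterises $\mathcal{M}^0_{G,\alpha}$ by correlation inequalities, and for the confounder also argues $\mathcal{M}^p_{G,\alpha}=F_G$ to get positive measure. Your argument is analytic instead:
\begin{itemize}
\item perturbing an $\alpha$-free model along a kernel direction gives $\mathcal{M}^+_{G,\alpha}\cap\mathcal{M}^-_{G,\alpha}\neq\emptyset$ directly, with no need to compute $\mathcal{M}^0_{G,\alpha}$;
\item one witness in $\mathcal{M}^p_{G,\alpha}\setminus\mathcal{M}^0_{G,\alpha}$ gives $\mathcal{M}^+_{G,\alpha}\neq\mathcal{M}^-_{G,\alpha}$.
\end{itemize}
You lose the explicit conditions of Lemma~\ref{lem:no_latent_part_id_values} and the measure statement. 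You gain a proof that does not rest on numerics.

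\textbf{The steps you defer.} Both go through with your first plan.

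A kernel direction with $A_\alpha\neq0$ always exists. In the confounder, eliminate $\gamma$ via the $(h,x)$ entry and solve the $(h,y)$ and $(x,y)$ entries for $(\delta,\alpha)$. This gives $\alpha(\sigma_{hh}\sigma_{xx}-\sigma_{hx}^2)=(2s_h+s_x+s_y)\sigma_{hx}\sigma_{hy}-(s_x+s_y)\sigma_{hh}\sigma_{xy}$, whose $s_h$-coefficient $2\sigma_{hx}\sigma_{hy}$ is nonzero on $F_G$. In the 3-cycle, eliminate $\beta,\gamma$ via the $(h,y)$ and $(x,y)$ entries. The $s_y$-coefficient of the $(h,x)$ entry is then $-\sigma_{hy}(\sigma_{xx}/\sigma_{xy}+\sigma_{xy}/\sigma_{yy})\neq0$, so $\alpha$ remains free.

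For the witness, take unit diagonal and off-diagonals of order $\epsilon$.
\begin{itemize}
\item \emph{Confounder.} All self-loops are negative since $A$ is triangular. With $\alpha=0$, the $(x,y)$ entry reads $(2s_h+s_x+s_y)\sigma_{hx}\sigma_{hy}=(s_x+s_y)\sigma_{hh}\sigma_{xy}$, which forces $\operatorname{sign}(\sigma_{hx}\sigma_{hy})=\operatorname{sign}(\sigma_{xy})$. So pick the opposite sign pattern. Self-loops $-1$ then give a valid model with $\alpha\approx2\sigma_{xy}\neq0$.
\item \emph{3-cycle.} Pick $\rho_{hx}\rho_{hy}/\rho_{xy}<0$. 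Self-loops $-1$ give a valid model with $\alpha\approx2\sigma_{hy}$. With $\alpha=0$ and $s_y=-1$, the $(h,x)$ entry together with $s_x<0$ (from $d_x>0$) forces $s_h>1-O(\epsilon)$. But $d_h>0$ forces $s_h<O(\epsilon)$, a contradiction.
\end{itemize}

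\textbf{Two slips to fix.}
\begin{itemize}
\item The confounder drift has six entries, not five. So the $\alpha$-free system has five unknowns and a two-dimensional solution cone, as in the cycle. ``Up to scale'' therefore still leaves one free parameter to handle in both graphs.
\item $\mathcal{M}^0_{G,\alpha}$ only requires $\operatorname{supp}(A)\subseteq E\setminus\{\alpha\}$. You may exclude a solution only because some $D_{ii}\le0$, never because a remaining edge vanishes.
\end{itemize}
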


\textit{Proof sketch.}\quad For each graph, we use the Lyapunov equation (Eq.~\eqref{eq:lyapunov}) to obtain a system of equations in the unknown drift and diffusion parameters. Imposing $D\in PDD_d$ and $\Sigma\in F_G$, for the graphs~\ref{fig:hy},~\ref{fig:chain},~\ref{fig:iv} and~\ref{fig:iv_cyclic}, we obtain an explicit dependence of $\operatorname{sign}(\alpha)$ on entries of $\Sigma$, which yields sign identifiability. For the remaining graphs, we set $\alpha=0$ and determine for which covariance matrices $\Sigma$ this leads to a contradiction. Such $\Sigma$'s cannot lie in $\mathcal{M}^{0}_{G,\alpha}$. By the $\mathcal{M}^{0}$-criterion (Theorem~\ref{thm:m0-criterion}), this separates covariance matrices for which the sign is identifiable from those for which both signs remain compatible. Applying this analysis yields partial identifiability for both~\ref{fig:confounding} and~\ref{fig:three_loop}. 

\begin{remark}
    For the graph in Fig.~\ref{fig:confounding}, we show in the proof in Appendix~\ref{sec:appendix_proof_no_latent_confoundin}, that partial identifiability holds with positive measure. Showing that partial identifiability constitutes a genuine edge-sign identifiability category.
\end{remark}

\begin{remark}\label{remark:confounding_difference_fixed_difussion}
    The (red) edge $\alpha$ in the graph in Fig.~\ref{fig:confounding} is an example where assuming a fixed diffusion matrix leads to different results. According to Theorem~\ref{thm:no_latent_id}, the (red) edge $\alpha$ is partially identifiable. In contrast, when considering a fixed diffusion matrix $C$, then the same (red) edge $\alpha$ is identifiable \citep[Theorem~5.3]{dettling2023identifiability}.
\end{remark}

The following two lemmas are obtained in the course of the proof of Theorem~\ref{thm:no_latent_id}.

\begin{lemma}[Conditions for Partial Sign Identifiability]\label{lem:no_latent_part_id_values}
    For the red edge $\alpha$ in the graphs shown in Figs.~\ref{fig:confounding} and~\ref{fig:three_loop}, the edge $\alpha$ becomes identifiable under additional conditions on the covariance matrix $\Sigma\in\mathcal{M}^{p}_{G,\alpha}$. 
    These conditions are stated in Appendix~\ref{sec:appendix_covariance_condititions}.
\end{lemma}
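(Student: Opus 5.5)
The plan is to use the $\mathcal{M}^{0}$ criterion (Theorem~\ref{thm:m0-criterion}). For $\Sigma\in\mathcal{M}^{p}_{G,\alpha}$, the edge $\alpha$ is identifiable at $\Sigma$ exactly when $\Sigma\notin\mathcal{M}^{0}_{G,\alpha}$. So for each of the two graphs it suffices to write down explicit conditions on $\Sigma$ under which the Lyapunov system with $\alpha=0$ has no admissible solution. Admissible here means $D\in PDD_d$, with the remaining support of $A$ unconstrained in sign.

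\textbf{Confounding graph (Fig.~\ref{fig:confounding}).} With $H\to X$, $H\to Y$, $\alpha: X\to Y$ and self-loops on all nodes, I would write the six scalar equations of $A\Sigma+\Sigma A^T=-D$. The three off-diagonal equations must equal zero, and the three diagonal equations must be negative.
\begin{enumerate}
\item Set $\alpha=0$. The off-diagonal equations become linear in the unknown drift entries $(a_{HH},a_{XX},a_{YY},a_{XH},a_{YH})$. The $(X,Y)$ equation then involves only $a_{XX},a_{YY},a_{XH},a_{YH}$, since $H$ has no parents.
\item Use the scale invariance to normalise, say $a_{HH}=-1$. Solve the off-diagonal equations to express the remaining entries as a family of at most one or two free parameters.
\item Substitute into the diagonal entries $D_{HH},D_{XX},D_{YY}>0$. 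This gives a small system of linear (in the free parameters) strict inequalities, with coefficients that are polynomial in the entries of $\Sigma$.
\end{enumerate}
Infeasibility of this system, which by Farkas/Fourier--Motzkin elimination reduces to explicit polynomial inequalities in $\Sigma_{ij}$, is the stated condition for identifiability. Under that condition, the sign of $\alpha$ should be readable from the $(X,Y)$ equation, in the same way Lemma~\ref{lem:no_latent_id_values} reads signs for the other graphs.

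\textbf{Three-cycle graph (Fig.~\ref{fig:three_loop}).} I would do the same with $\alpha$ set to zero in the cycle. Removing $\alpha$ turns the cycle into a chain, so the equations become triangular and can be solved successively. Feasibility then again reduces to positivity of the three diagonal $D$ entries after elimination. The negation of this feasibility gives the conditions.

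The main obstacle is the elimination step. I must characterise precisely when the linear feasibility system in the free drift parameters has a solution, including the strictness of the inequalities and the requirement that $\Sigma\in F_G$. This means no off-diagonal $\Sigma_{ij}=0$ in these connected graphs, which is needed to divide by covariances. Hurwitz stability is automatic from $\Sigma,D$ positive definite, so I will not check it separately. I would first eliminate variables explicitly via Fourier--Motzkin and present the resulting determinant-like inequalities as the conditions.
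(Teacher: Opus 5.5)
Your plan follows the paper's proof. By the $\mathcal{M}^0$ criterion it suffices to show that the Lyapunov system with $\alpha=0$ has no solution with $D\in PDD_3$. The paper does exactly your one-parameter elimination:
\begin{itemize}
\item it solves the off-diagonal equations for the cross edges;
\item it encodes $D_{ii}>0$ as $s_x=-b_1s_h\rho_{hx}^2/(\rho_{hx}^2-1)$ and $s_y=-b_3s_h\rho_{hy}^2/(\rho_{hy}^2-1)$ with $b_1,b_3>1$ for the confounding graph, and as $s_x=b_1\frac{\rho_{xy}^2}{1-\rho_{xy}^2}s_y$, $s_h=b_2\frac{d}{1-d}s_y$ with $d=\rho_{hx}\rho_{hy}/\rho_{xy}$ for the three-cycle;
\item it reads off when the single remaining linear equation in the $b_i$ is infeasible.
\end{itemize}
Your closing remark about reading $\operatorname{sign}(\alpha)$ off the $(X,Y)$ equation is not needed for the lemma, and the paper gives no such formula for these two graphs.

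One caution for when you carry out the three-cycle elimination. The coefficient of $b_2$ comes out as
\[
\frac{d}{1-d}\bigl(\rho_{hx}-\rho_{hy}/\rho_{xy}\bigr)=\frac{\rho_{hx}\rho_{hy}}{\rho_{xy}-\rho_{hx}\rho_{hy}}\bigl(\rho_{hx}-\rho_{hy}/\rho_{xy}\bigr).
\]
The paper's $b$ instead has $\rho_{xy}\rho_{hx}$ in the numerator, so your conditions will not reproduce Appendix~\ref{sec:appendix_covariance_condititions} verbatim.

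The appendix version is in fact not a sufficient condition. Take $\sigma_{ii}=1$ and $(\rho_{hx},\rho_{hy},\rho_{xy})=(-0.6,-0.4,0.3)$, which is positive definite. The appendix's quantities are $d=0.8$, $a\approx-0.047$, $b=-2.2$ and $(-a+c)/b\approx0.64$, so its (c.1) declares $\alpha$ identifiable. Yet the choice $\alpha=0$, $s_y=-1$, $s_x=-20$, $s_h\approx-11.11$, $\gamma=6.3$, $\beta\approx-16.15$ solves the Lyapunov equation with $d_x\approx36.2$, $d_h\approx2.84$ and $d_y=2$. Perturbing $\alpha$ to $\pm\varepsilon$ keeps $D$ positive and all edges nonzero, so this $\Sigma$ lies in $\mathcal{M}^+_{G,\alpha}\cap\mathcal{M}^-_{G,\alpha}$ and $\alpha$ is not identifiable there.

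With the corrected coefficient, the paper's case analysis (c.1)--(c.5) is valid. That corrected version is what your elimination should output.
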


\begin{lemma}[Sign Expressions for Sign Identifiable Edges]\label{lem:no_latent_id_values}
    For the sign identifiable red edge $\alpha$ from the graphs shown in Figs.~\ref{fig:hy}, \ref{fig:chain}, \ref{fig:iv}, and~\ref{fig:iv_cyclic}, the sign of $\alpha$ can be expressed in terms of $\Sigma\in\mathcal{M}^p_\alpha$ as follows:
    \begin{itemize}
        \item for~\ref{fig:hy}:  $\operatorname{sign}(\alpha) = \operatorname{sign}(\sigma_{hy})$,
        \item for~\ref{fig:chain}: $\operatorname{sign}(\alpha)=\operatorname{sign}(\sigma_{hy})/\operatorname{sign}(\sigma_{hx})$,
        \item for~\ref{fig:iv}: $\operatorname{sign}(\alpha)=\operatorname{sign}(\sigma_{zy})/\operatorname{sign}(\sigma_{zx})$.
        \item for~\ref{fig:iv_cyclic}: 
        
    $\operatorname{sign}(\alpha)=\begin{cases}
        \operatorname{sign}(\sigma_{zy}/\sigma_{zx})\qquad \text{, if }\, \rho_{zy}\rho_{xy}/\rho_{zx}<1,\\
        -\operatorname{sign}(\sigma_{zy}/\sigma_{zx})\quad \text{, if }\, \rho_{zy}\rho_{xy}/\rho_{zx}>1.
    \end{cases}$
    \end{itemize}
\end{lemma}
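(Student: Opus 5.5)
The plan is to read each sign off the off-diagonal (and, for the cyclic case, one diagonal) entries of the Lyapunov equation $A\Sigma+\Sigma A^T=-D$. Because $D\in PDD_d$, every off-diagonal entry of $A\Sigma+\Sigma A^T$ vanishes, and every diagonal entry is strictly negative. Throughout I fix $\Sigma\in\mathcal{M}^p_{G,\alpha}$ with a compatible pair $(A,D)$. I use $t$-faithfulness ($\Sigma\in F_G$) in two ways: to set to zero the covariances between nodes with no common ancestor (e.g.\ $\sigma_{zh}=0$), and to guarantee that the covariances I divide by are non-zero (e.g.\ $\sigma_{hx},\sigma_{zx},\sigma_{zy}\neq 0$).

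\emph{Acyclic cases (Figs.~\ref{fig:hy}, \ref{fig:chain}, \ref{fig:iv}).} Ignoring self-loops these graphs are DAGs, so $A$ is triangular after reordering, and Hurwitz stability forces all diagonal entries $a_{ii}<0$.
\begin{itemize}
\item For Fig.~\ref{fig:hy}, the $(h,y)$ entry gives $\alpha\sigma_{hh}+(a_{hh}+a_{yy})\sigma_{hy}=0$. Since $\sigma_{hh}>0$ and $a_{hh}+a_{yy}<0$, this yields $\operatorname{sign}(\alpha)=\operatorname{sign}(\sigma_{hy})$.
\item For the chain $H\to X\to Y$ with $\beta=A_{xh}$, the $(h,x)$ entry gives $\beta\sigma_{hh}+(a_{hh}+a_{xx})\sigma_{hx}=0$. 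The $(h,y)$ entry gives $\alpha\sigma_{hx}+(a_{hh}+a_{yy})\sigma_{hy}=0$. Hence $\operatorname{sign}(\alpha)=\operatorname{sign}(\sigma_{hy})/\operatorname{sign}(\sigma_{hx})$.
\item For the IV graph, the $(z,y)$ entry reads $\alpha\sigma_{zx}+(a_{zz}+a_{yy})\sigma_{zy}+A_{yh}\sigma_{zh}=0$. By $t$-faithfulness $\sigma_{zh}=0$, since $\operatorname{An}(Z)\cap\operatorname{An}(H)=\emptyset$. This gives $\operatorname{sign}(\alpha)=\operatorname{sign}(\sigma_{zy})/\operatorname{sign}(\sigma_{zx})$.
\end{itemize}

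\emph{Cyclic IV (Fig.~\ref{fig:iv_cyclic}).} This is the case I expect to be the main obstacle. Here $X\leftrightarrow Y$ forms a cycle, so $a_{yy}$ need not be negative and the above argument breaks down. I still get $a_{zz}<0$, because $Z$ has no parents: the $(z,z)$ entry is $2a_{zz}\sigma_{zz}=-d_z<0$. The $(z,y)$ entry gives
\[
a_{yy}=-a_{zz}-\alpha\sigma_{zx}/\sigma_{zy}.
\]
I substitute this into the $(y,y)$ entry $2(\alpha\sigma_{xy}+a_{yy}\sigma_{yy})=-d_y<0$. This assumes $Y$'s only parents are $X$ and itself; if the figure has an extra parent such as $H$, its term must be eliminated similarly using the $(z,\cdot)$ equations and $\sigma_{zh}=0$. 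The substitution yields
\[
\alpha\Big(\sigma_{xy}-\frac{\sigma_{zx}\sigma_{yy}}{\sigma_{zy}}\Big)<a_{zz}\sigma_{yy}<0.
\]
Factoring out $\sigma_{zx}\sigma_{yy}/\sigma_{zy}$, the bracket becomes $\frac{\sigma_{zx}\sigma_{yy}}{\sigma_{zy}}\big(\rho_{zy}\rho_{xy}/\rho_{zx}-1\big)$, using $\sigma_{xy}\sigma_{zy}/(\sigma_{yy}\sigma_{zx})=\rho_{zy}\rho_{xy}/\rho_{zx}$. Therefore
\[
\operatorname{sign}(\alpha)=-\operatorname{sign}(\sigma_{zy}/\sigma_{zx})\,\operatorname{sign}\big(\rho_{zy}\rho_{xy}/\rho_{zx}-1\big),
\]
which is exactly the stated case split. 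The remaining care points are these. First, confirm which parents $Y$ has in Fig.~\ref{fig:iv_cyclic}. Second, check that the boundary case $\rho_{zy}\rho_{xy}/\rho_{zx}=1$ cannot occur for $\Sigma\in\mathcal{M}^p_{G,\alpha}$; the strict inequality above makes the bracket non-zero whenever $\alpha\neq0$, which settles this.
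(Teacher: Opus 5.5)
Your proof is correct and follows the paper's route. You use the same Lyapunov entries: the $(h,y)$ entry for Figs.~\ref{fig:hy} and~\ref{fig:chain}, and the $(z,y)$ entry with $\sigma_{zh}=0$ for Fig.~\ref{fig:iv}, with triangularity giving negative self-loops. For the cyclic IV you combine the $(z,y)$ and $(y,y)$ entries as the paper does, except that you eliminate $s_y$ to obtain the single inequality $\alpha\,\frac{\sigma_{zx}\sigma_{yy}}{\sigma_{zy}}(d-1)<s_z\sigma_{yy}<0$. That inequality replaces the paper's five-case analysis of $ds_z>(1-d)s_y$ and excludes $d=1$ at once. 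In the paper's graph $Y$'s only non-self parent is $X$ (the cycle closes via $Y\to H\to X$), so your substitution applies as written.
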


\begin{theorem}[Graphical Edge-Sign Identifiability]\label{thm:identifiability}
    For the red edge $\alpha$ in the graphs in Figs.~\ref{fig:hy},\ref{fig:chain},\ref{fig:iv} and~\ref{fig:iv_cyclic}, the sign of $\alpha$ is identifiable.
\end{theorem}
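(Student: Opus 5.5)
The plan is to derive each case from the Graphical Criterion (Theorem~\ref{thm:graphical_criterion}), via the General Instrumental Variable example. For each of the four graphs, I delete the red edge $\alpha$ to obtain $G'=(V,E\setminus\{\alpha\})$. I then exhibit a pair of nodes $V_i,V_j$ whose ancestral sets intersect in $G$ but are disjoint in $G'$. Self-loops do not affect the argument: they only add a node to its own ancestral set, and every node is in its own ancestral set anyway.

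The graph-by-graph steps are as follows.
\begin{itemize}
\item For Fig.~\ref{fig:hy} ($H\rightarrow Y$ with $\alpha$ the only non-loop edge), $G'$ consists of self-loops only. Hence $\operatorname{An}_{G'}(H)\cap\operatorname{An}_{G'}(Y)=\{H\}\cap\{Y\}=\emptyset$, whereas $H\in\operatorname{An}_G(H)\cap\operatorname{An}_G(Y)$.
\item For the chain in Fig.~\ref{fig:chain}, the pair is the endpoint of $\alpha$ together with a node upstream of $\alpha$. With $\alpha=X\rightarrow Y$, take $(H,Y)$: removing $\alpha$ leaves $\operatorname{An}_{G'}(Y)=\{Y\}$, which is disjoint from $\operatorname{An}_{G'}(H)$. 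A directed tree is the special case of this argument, as already noted in the Directed Tree example.
\item For the IV graph in Fig.~\ref{fig:iv}, take the pair $(Z,Y)$. In $G$, $Z\in\operatorname{An}_G(Y)$ via $Z\rightarrow X\xrightarrow{\alpha}Y$. The only directed path from $Z$ to $Y$ uses $\alpha$, and $Z$ has no parents other than itself. So $\operatorname{An}_{G'}(Z)=\{Z\}$ and $Z\notin\operatorname{An}_{G'}(Y)$, giving disjointness.
\item For the cyclic IV graph in Fig.~\ref{fig:iv_cyclic}, I argue in the same way with $(Z,Y)$. The cycle through $X$ and $Y$, and any confounder $H$, only add nodes that are not ancestors of $Z$. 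Since every directed path from $Z$ into $Y$ passes through $\alpha$, deleting $\alpha$ removes $Z$ from $\operatorname{An}_{G'}(Y)$, while $\operatorname{An}_{G'}(Z)=\{Z\}$ stays unchanged.
\end{itemize}

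Definition~\ref{def:edge_identifiability} also requires that $\mathcal{M}^{+}_{G,\alpha}$ or $\mathcal{M}^{-}_{G,\alpha}$ be non-empty, and the proof of Theorem~\ref{thm:graphical_criterion} only shows that $\mathcal{M}^0_{G,\alpha}=\emptyset$. I would close this gap by an explicit construction. Take $A=-I+\varepsilon B$ with $\operatorname{supp}(B)$ equal to the off-diagonal edges, all entries nonzero, and $\varepsilon$ small, so that $A$ is Hurwitz and $\operatorname{supp}(A)=E$. Take $D=I\in PDD_d$. The Lyapunov solution $\Sigma$ is then positive definite. Expanding $\Sigma$ to first order in $\varepsilon$ shows that each entry $\Sigma_{ij}$ with $\operatorname{An}_G(V_i)\cap\operatorname{An}_G(V_j)\neq\emptyset$ is a nonzero analytic function of $\varepsilon$, so it is nonzero for generic small $\varepsilon$. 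The entries with disjoint ancestral sets vanish by \cite{boege2025conditional}. Hence $\Sigma\in F_G$, and so $\Sigma\in\mathcal{M}^{p}_{G,\alpha}$.

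I expect the main obstacle to be the cyclic IV case. There I have to check carefully from the exact edge set of Fig.~\ref{fig:iv_cyclic} that no directed path from $Z$ to $Y$ avoids $\alpha$, for example through the feedback edge or through $H$. If such a path exists, I would look for a different witness pair, namely a node whose only route into the head of $\alpha$ runs through $\alpha$.
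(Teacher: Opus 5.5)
Your argument is correct and is essentially the paper's own proof: you delete $\alpha$, observe that $H$ and $Y$ (Figs.~\ref{fig:hy} and~\ref{fig:chain}) and $Z$ and $Y$ (Figs.~\ref{fig:iv} and~\ref{fig:iv_cyclic}) no longer share a common ancestor, and invoke Theorem~\ref{thm:graphical_criterion}. Your extra check that $\mathcal{M}^p_{G,\alpha}\neq\emptyset$ is a worthwhile addition that the paper omits, but the first-order term $\varepsilon(B+B^T)/4$ only certifies adjacent pairs, so for non-adjacent pairs such as $(Z,Y)$ you should argue with the lowest-order term in $\varepsilon$ (coming from the shortest treks) and take $B$ entrywise positive on the edges so that no cancellation can occur.
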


\begin{proof}
    Let the graphs shown in Figs.~\ref{fig:hy},\ref{fig:chain},\ref{fig:iv} and~\ref{fig:iv_cyclic} be the original graphs $G_0,G_1,G_2$ and $G_3$ and let $G'_0,G'_1,G'_2$ and $G'_3$ denote the corresponding graphs obtained by removing the red edge $\alpha$. 
    Comparing $G_0$ with $G'_0$ and $G_1$ with $G'_1$, we see that $H$ and $Y$ no longer share a common ancestor in $G'_0$ and $G'_1$, respectively. 
    Comparing $G_2$ with $G'_2$ and $G_3$ with $G'_3$, we see that $Z$ and $Y$ no longer share a common ancestor in $G'_2$ and $G'_3$, respectively. 
    Hence, the marginal independences implied by $G$ and $G'$ differ in each case, and the graphical criterion (Theorem~\ref{thm:graphical_criterion}) yields identifiability of $\alpha$ for all four graphs.
\end{proof}

\begin{table*}
\centering

\begin{tabular}{lccccccccc}
\toprule
Graph in Fig. &~\ref{fig:hy} &~\ref{fig:chain} &~\ref{fig:confounding} &~\ref{fig:three_loop} &~\ref{fig:iv} &~\ref{fig:iv_cyclic} &~\ref{fig:one_proxy} &~\ref{fig:two_proxy} &~\ref{fig:two_proxy_circulair} ,\\
\midrule
Edge-sign identifiable & 1.0 &1.0 & 0.44 & 0.64 & 1.0 & 1.0 & 0.85 & 1.0 & 1.0 ,\\
\bottomrule
\end{tabular}
\caption{Empirical fraction in $[0,1]$ of sampled covariance matrices $\Sigma \in \mathcal{M}^p_{G,\alpha}$ for which the red edge $\alpha$ is sign identifiable (graphs in Fig.~\ref{fig:graphs}, no latent variables). For a discussion on these numerical results, see Section~\ref{sec:numerical_edge}.}
\label{tab:identifiability}
\end{table*}

\subsubsection{Latent Variables}\label{sec:latent}
\begin{theorem}\label{thm:latent_id}
    For the red edge $\alpha$ in the graphs in Fig.~\ref{fig:hy}, \ref{fig:confounding},~\ref{fig:iv} and~\ref{fig:iv_cyclic} (with $H$ being latent) under the $t$-faithfulness assumption, the sign of $\alpha$ is:
    \begin{itemize}
        \item  non-identifiable for~\ref{fig:hy} and~\ref{fig:confounding},
        \item identifiable for~\ref{fig:iv} and~\ref{fig:iv_cyclic}.
    \end{itemize}
\end{theorem}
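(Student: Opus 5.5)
The plan splits by category. For the non-identifiable cases (Figs.~\ref{fig:hy} and~\ref{fig:confounding} with $H$ latent) I will construct matching models of opposite sign. For the identifiable cases (Figs.~\ref{fig:iv} and~\ref{fig:iv_cyclic}) I will show that the sign expressions of Lemma~\ref{lem:no_latent_id_values} only use Lyapunov equations whose entries are observable or forced to vanish by $t$-faithfulness. Following Remark~\ref{remark:no_latent_graphical_criterion}, in the latent setting non-identifiability means the following: for every observed block $\Sigma_{oo}$ arising from some $\Sigma\in\mathcal{M}^{+}_{G,\alpha}$ there is $\Sigma'\in\mathcal{M}^{-}_{G,\alpha}$ with $\Sigma'_{oo}=\Sigma_{oo}$, and conversely. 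Identifiability means that $\Sigma_{oo}$ alone fixes $\operatorname{sign}(\alpha)$.

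\textbf{Fig.~\ref{fig:hy}.} Only $Y$ is observed, so $\Sigma_{oo}=\sigma_{yy}>0$ is the entire datum. I will take any stable lower-triangular $A$ with $\alpha>0$ and any diagonal $D\succ 0$; the resulting $\Sigma$ is $t$-faithful, since $\sigma_{hy}\neq 0$ by Lemma~\ref{lem:no_latent_id_values}. Flipping $H\mapsto -H$, i.e.\ conjugating by $\mathrm{diag}(-1,1)$, maps $A$ to a drift with $-\alpha$, keeps $D$ diagonal and leaves $\sigma_{yy}$ unchanged. Scaling $D$ realises any target value of $\sigma_{yy}$. Hence every $\sigma_{yy}>0$ is compatible with both signs, and $\mathcal{M}^{\pm}$ are nonempty.

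\textbf{Fig.~\ref{fig:confounding}.} Here $X$ and $Y$ are observed and $\alpha:X\to Y$. The sign flip of $H$ alone does not change $\alpha$, so a direct construction is needed. Given a realizable observed block $(\sigma_{xx},\sigma_{xy},\sigma_{yy})$ with $\sigma_{xy}\neq0$, I will keep the observed block fixed and treat the latent entries $\sigma_{hh},\sigma_{hx},\sigma_{hy}$ as free variables.
\begin{enumerate}
\item Write out the six Lyapunov equations for the three-node system.
\item Prescribe $\alpha$ of the desired sign and small in magnitude.
\item Choose the confounding edges $H\to X$ and $H\to Y$ so that the term through $H$ dominates the off-diagonal equation for $(x,y)$ and forces the off-diagonal entry $D_{xy}=0$ with the correct sign of $\sigma_{xy}$.
\item Solve the remaining equations for the self-loops and the diagonal of $D$.
\item Check that the diagonal of $D$ is positive and that the full $\Sigma$ is positive definite (equivalently, by Section~\ref{sec:background_sde}, that $A$ is Hurwitz).
\end{enumerate}
This freedom is the main obstacle. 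I expect to exhibit an explicit one-parameter family and verify positivity in a limit of strong confounding (large $|\sigma_{hx}|,|\sigma_{hy}|$ with $\sigma_{hh}$ large enough for positive definiteness), then argue by continuity and rescaling.

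\textbf{Figs.~\ref{fig:iv} and~\ref{fig:iv_cyclic}.} Here $Z,X,Y$ are observed and $H$ is latent. Since $\operatorname{An}(Z)\cap\operatorname{An}(H)=\emptyset$, $t$-faithfulness gives $\sigma_{zh}=0$. I will examine the Lyapunov equations for the entries $(z,x)$ and $(z,y)$, together with $D_{zx}=D_{zy}=0$. In these equations $H$ enters only through $\sigma_{zh}$, which vanishes, and the remaining unknowns are the drift entries $a_{zz},a_{xx},a_{yy},a_{xz}$, $\alpha$ and (for Fig.~\ref{fig:iv_cyclic}) the back edge. I will eliminate the unknowns other than $\alpha$ exactly as in the proof of Lemma~\ref{lem:no_latent_id_values}, using the stability sign constraints such as $a_{zz}<0$ and the $(z,z)$ equation to control signs. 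This yields $\operatorname{sign}(\alpha)=\operatorname{sign}(\sigma_{zy}/\sigma_{zx})$ for Fig.~\ref{fig:iv}. For Fig.~\ref{fig:iv_cyclic} it yields the case distinction in $\rho_{zy}\rho_{xy}/\rho_{zx}$, which is expressed only through entries of $\Sigma_{oo}$. Since the resulting sign is a function of $\Sigma_{oo}$ alone, $\mathcal{M}^{+}$ and $\mathcal{M}^{-}$ project to disjoint sets of observed blocks. Nonemptiness follows by exhibiting one parametrisation.

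The delicate point in the cyclic case is checking that the $(x,y)$ equation used for the $\rho$ condition involves no latent terms. If it does involve them, I will replace the $(x,y)$ equation by a combination of the $(z,\cdot)$ equations.
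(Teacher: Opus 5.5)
\textbf{The confounding case is not proved, and the regime you propose for it is the wrong one.} Your treatment of Fig.~\ref{fig:hy} is fine; conjugating by $\mathrm{diag}(-1,1)$ is cleaner than the paper's ``choose $\sigma_{hy}$''. Figs.~\ref{fig:iv} and~\ref{fig:iv_cyclic} are also fine in substance. The confounding case, however, carries all the content of the non-identifiability claim. Steps 2--5 plus ``I expect to exhibit a one-parameter family \dots\ in a limit of strong confounding'' leave open exactly the existence statement that has to be shown.

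To see why strong confounding fails, set $\alpha=0$. The $(h,x)$ and $(h,y)$ equations give $\gamma=-(s_h+s_x)\sigma_{hx}/\sigma_{hh}$ and $\delta=-(s_h+s_y)\sigma_{hy}/\sigma_{hh}$. The $(x,y)$ equation then becomes
\[
\frac{\rho_{hx}\rho_{hy}}{\rho_{xy}}=\frac{s_x+s_y}{2s_h+s_x+s_y}\in(0,1),
\]
because all self-loops are negative ($A$ is triangular). So in the OU model pure confounding always yields $|\rho_{xy}|>|\rho_{hx}\rho_{hy}|$.

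Since the scale of $H$ is free, your limit really controls the latent correlations. Making them large pushes $\rho_{hx}\rho_{hy}/\rho_{xy}$ above $1$; for example, for $\rho_{xy}>0$ take $\rho_{hx}=\rho_{hy}=r$ with $\rho_{xy}<r^2<(1+\rho_{xy})/2$, which is still positive definite. For such completions no $\alpha=0$ model exists, so by Theorem~\ref{thm:m0-criterion} at most one sign of $\alpha$ is compatible. ``Small $\alpha$ of the desired sign with the $H$-term dominating'' then fails for the other sign. Continuity or rescaling cannot repair this: for fixed $\Sigma$ the admissible $(A,D)$ form a convex cone, so if both signs occurred, a convex combination would give an $\alpha=0$ model (Lemma~\ref{lem:m0_implications}).

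\textbf{The missing step is to choose the latent entries in a moderate regime and reduce to the fully observed analysis, as the paper does.} For example, take $\rho_{hx}=a|\rho_{xy}|$ and $\rho_{hy}=a\rho_{xy}$ with $0<a<1$.
\begin{itemize}
\item Positive definiteness holds, since $1+2\rho_{hx}\rho_{hy}\rho_{xy}-\rho_{hx}^2-\rho_{hy}^2-\rho_{xy}^2=(1-|\rho_{xy}|)(1+|\rho_{xy}|-2a^2\rho_{xy}^2)>0$. All latent covariances are nonzero, so the completion is in $F_G$.
\item The sign condition holds, and $\rho_{hx}\rho_{hy}/\rho_{xy}=a^2|\rho_{xy}|<1$.
\item The remaining $\mathcal{M}^0$ condition (the negation of (c.1) in Appendix~\ref{sec:appendix_covariance_condititions}) reduces to $|\rho_{xy}|(1-a^2|\rho_{xy}|)<1-a^2\rho_{xy}^2$, i.e.\ $|\rho_{xy}|<1$.
\end{itemize}
Thus this single completion lies in $\mathcal{M}^0_{G,\alpha}\cap F_G$. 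With $\mathcal{M}^p_{G,\alpha}=F_G$ and Theorem~\ref{thm:m0-criterion}, it lies in $\mathcal{M}^+_{G,\alpha}\cap\mathcal{M}^-_{G,\alpha}$.

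Your perturbative idea becomes valid once it starts from this $\alpha=0$ model. Perturbing $s_y$ moves $\alpha$ at a rate proportional to $\sigma_{hh}\sigma_{xy}-\sigma_{hx}\sigma_{hy}\neq0$. It leaves $d_h$ and $d_x$ unchanged and keeps $d_y>0$ for small perturbations.

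\textbf{A smaller slip concerns Fig.~\ref{fig:iv_cyclic}.} There $Z\to X\to Y\to H$, so $Z\in\operatorname{An}_G(H)$ and $\sigma_{zh}\neq0$; the $(z,x)$ equation contains $\gamma\sigma_{zh}$. This is harmless, because the $(z,z)$, $(z,y)$ and $(y,y)$ equations contain no $H$-terms. The $\rho$-condition should be taken from the diagonal $(y,y)$ equation with $d_y>0$. It cannot come from the $(x,y)$ equation, which contains $\gamma\sigma_{hy}$, nor from a combination of $(z,\cdot)$ equations.
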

\textit{Proof sketch.}\quad We build on the proofs from the no latent case (Theorem~\ref{thm:no_latent_id}). The only change is that $H$ is now latent, so covariance entries involving $H$ are unobserved. We therefore treat the corresponding blocks (e.g., $\Sigma_{ho},\Sigma_{oh}, \Sigma_{hh}$) as free variables that can vary subject to $\Sigma\in \mathcal{M}^p_{G,\alpha}$. This additional freedom allows us to choose latent-dependent covariance entries so that some scenarios that were (partially) identifiable in the fully observed case become non-identifiable. Note that ensuring $\Sigma\in \mathcal{M}^p_{G,e}$ is one of the main challenges in this proof. 

\begin{remark}
    In particular, among the sign expressions in Lemma~\ref{lem:no_latent_id_values}, only for the (cycle with) instrumental variable case (Fig.~\ref{fig:iv} and~\ref{fig:iv_cyclic}), we still have sign-identifiability in the latent variable setting.
\end{remark}

\section{Numerical Results}\label{sec:numerical_results}
This section reports numerical results on sign-identifiability of the red edge $\alpha$ for the graphs in Fig.~\ref{fig:graphs} (no latent variables). The results are summarized in Table~\ref{tab:identifiability}. Each entry is the empirical fraction in $[0,1]$ of identifiable instances of $\alpha$ across 1000 independently generated samples produced by the algorithm described below. If the fraction equals $1$ (resp.\ $0$), $\alpha$ is identifiable (resp.\ non-identifiable). If the fraction lies in $(0,1)$, $\alpha$ is partially identifiable.\footnote{For the implementation, see the following link: \href{https://github.com/ggvanseev/UAI_Sign-Identifiability-of-Causal-Effects-in-Stationary-Stochastic-Dynamical-Systems}{repository}. The repository includes a guiding example to test any desired custom causal structure.}

\subsection{Method}\label{sec:numerical_results_method}
Throughout this section, we repeatedly use that for $D\in PDD_d\subset PD_d$ in the Lyapunov equation (Eq.~\eqref{eq:lyapunov}),\, $\Sigma\in PD_d$ if and only if $A$ is Hurwitz stable \citep{frommer2012verified}.

For a fixed graph $G=(V,E)$, samples are generated as follows. We define a symbolic drift matrix $A_{sym}$, diffusion matrix $D_{sym}$ and covariance matrix $\Sigma$, where $\operatorname{supp}(A_{sym})=E$, $D_{sym}$ is diagonal and $\Sigma_{sym}$ respects the marginal independences of the graph.
We first draw a Hurwitz stable drift matrix $A$ with $\operatorname{supp}(A)=E$ and a diagonal matrix $D \in PDD_d$. Each non-zero entry from $(A_{sym}, D_{sym})$ is sampled uniformly from a bounded interval (e.g., $A_{ij} \sim U(-10,10)$), restricting to the appropriate domain when the sign is known (e.g., $D_{ii} \sim U(0,10)$). Since $(A,D)$ can be rescaled by any $a \in \mathbb{R}_+$ without changing $\Sigma$, this sampling effectively explores the parameter space. We resample $A$ until it is Hurwitz stable. Given $(A,D)$, we solve the Lyapunov equation to obtain $\Sigma\in PD_d$. We then verify if $\Sigma$ respects the marginal independences by comparison with the zero pattern in $\Sigma_{sym}$, if this fails we reject the sample. If it passes, $\Sigma\in\bigcup_{e \in E}\mathcal{M}^p_{G,e}$.

For a fixed edge $e \in E$, we then test sign-identifiability by searching for $(A',D')$ with $D' \in PDD_d$ and $\operatorname{sign}(A'_e) = -\operatorname{sign}(A_e)$ that satisfies the Lyapunov equation with $\Sigma$. This feasibility problem is solved numerically (see remark below). Since $\Sigma \in PD_d$ and $D' \in PDD_d$, any feasible $A'$ is necessarily Hurwitz stable. If such $(A',D')$ exists, $e$ is declared non-identifiable; otherwise it is identifiable (Definition~\ref{def:pointwise_edge_sign_identifiability}).This procedure is repeated for 1000 independent samples. See Appendix~\ref{sec:appendix_algo} for the pseudocode.

\begin{remark}\label{remark:feasability_problem}
    Without latent variables, the Lyapunov equation induces a linear system in the unknowns $(A',D')$ for fixed $\Sigma$. Hence, testing feasibility reduces to a linear optimisation (or feasibility) problem, which admits sound and complete polynomial-time algorithms.\footnote{To see how the graph size in terms of the number of nodes scales, see Appendix~\ref{app:numerical_benchmark} for a numerical benchmark.} Consequently, if a Hurwitz stable matrix $A$ can be sampled in polynomial time, the overall procedure runs in polynomial time.
    \\
    In contrast, with latent variables, the Lyapunov constraints become bilinear: the unobserved covariance entries $\sigma_{hh} \in \Sigma$ interact multiplicatively with drift coefficients $A_{ij}$. The resulting feasibility problem is bilinear and therefore NP-hard \citep{JMLR:v12:petrik11a}. For this reason, we restrict our experiments to graphs without latent variables.
\end{remark}

\subsection{Edge-Sign Identifiability}\label{sec:numerical_edge}
Table~\ref{tab:identifiability} reports the sign identifiability results for the red edge $\alpha$ in Fig.~\ref{fig:graphs}. The second row shows the empirical fraction of samples in which the edge is identifiable.
The numerical results yield three observations. First, the empirical fractions are fully consistent with Theorem~\ref{thm:no_latent_id}: sign-identifiable graphs have fraction $1$, non-identifiable graphs have fraction $0$, and partially identifiable graphs yield fractions in $(0,1)$. Second, for the graphs in Figs.~\ref{fig:one_proxy}–\ref{fig:two_proxy_circulair} (not analysed in Section~\ref{sec:no_latent}), the sign appears identifiable in Figs.~\ref{fig:two_proxy} and~\ref{fig:two_proxy_circulair}, and partially identifiable in Fig.~\ref{fig:one_proxy}. Third, in the partially identifiable category, both outcomes occur with substantial frequency (identifiable fractions $0.44$, $0.64$, and $0.85$ for Figs.~\ref{fig:confounding},~\ref{fig:three_loop}, and~\ref{fig:one_proxy}), indicating the need to verify sign identifiability for the specific covariance matrix under consideration for those structures. 

\section{Conclusion}
We studied identifiability in continuous linear stationary SDEs given the causal graph, where we relaxed the assumption of knowing the diffusion matrix $C$. In this setup, the linear SDE is scale invariant (when $C$ is not fixed), therefore, we aimed to identify the sign of a given edge. We introduced edge-sign identifiability and characterised three notions of sign identifiability, namely, identifiable, non-identifiable, and partially identifiable. Of which, partial identifiability forms a new category in the literature. Using these categories, we derived general criteria characterising when the sign of an edge can be determined from the observational covariance matrix $\Sigma$, given the causal graph.  We illustrated the applicability of our results on classical structures, including an instrumental variable setting, for which we obtained an explicit sign in terms of $\Sigma$. Moreover, we showed that in the confounding setting, partial identifiability has positive measure, thereby showing that partial identifiability is a genuine category. Numerical experiments further indicate that when an edge is partially identifiable, both identifiable and non-identifiable constitute substantial sub-categories. Future directions include extensions to subgraphs and graph-level sign identifiability.

\bibliography{uai2026-template}

\newpage

\onecolumn

\title{Sign Identifiability of Causal Effects in Stationary Stochastic Dynamical Systems,\\(Supplementary Material)}
\maketitle

\appendix

\section{Clarification Pointwise Edge-Sign Identifiability}\label{sec:appendix_pointwise_clarification}
For a fixed $\Sigma \in \mathcal{M}^p_{G,e}$, Definition~\ref{def:edge_identifiability} amounts to intersecting the corresponding sets with the singleton $\{\Sigma\}$. Hence,
\begin{itemize}
        \item non-identifiable if $\mathcal{M}^{+}_{G,e}\cap\{\Sigma\}=\{\Sigma\}=\mathcal{M}^{-}_{G,e}\cap\{\Sigma\}$ while $\mathcal{M}^{-}_{G,e}\cap\{\Sigma\}\neq\emptyset$,
        \item partially identifiable if $\mathcal{M}^{+}_{G,e}\cap\{\Sigma\}\neq\mathcal{M}^{-}_{G,e}\cap\{\Sigma\}$ and $\mathcal{M}^{+}_{G,e}\cap\mathcal{M}^{-}_{G,e}\cap\{\Sigma\}\neq\emptyset$,
        \item identifiable if $\mathcal{M}^{+}_{G,e}\cap\mathcal{M}^{-}_{G,e}\cap\{\Sigma\}=\emptyset$ while $\mathcal{M}^{-}_{G,e}\cap\{\Sigma\}\neq\emptyset$ or $\mathcal{M}^{+}_{G,e}\cap\{\Sigma\}\neq\emptyset$.
    \end{itemize}
Therefore, non-identifiability reduces to $\Sigma\in\mathcal{M}^{+}_{G,e}\cap\mathcal{M}^{-}_{G,e}$ and identifiability reduces to $\Sigma\not\in\mathcal{M}^{+}_{G,e}\cap\mathcal{M}^{-}_{G,e}$. Since $\mathcal{M}^{+}_{G,e}\cap\{\Sigma\}\neq\mathcal{M}^{-}_{G,e}\cap\{\Sigma\}$ implies that $\Sigma\not\in\mathcal{M}^{+}_{G,e}\cap\mathcal{M}^{-}_{G,e}$, whereas $\mathcal{M}^{+}_{G,e}\cap\mathcal{M}^{-}_{G,e}\cap\{\Sigma\}\neq\emptyset$ implies that $\Sigma\not\in\mathcal{M}^{+}_{G,e}\cap\mathcal{M}^{-}_{G,e}$, we obtain a contradiction. Hence, partial identifiability cannot be meaningfully defined for a fixed $\Sigma$. Summarising this gives us the pointwise edge-sign identifiability Definition~\ref{def:pointwise_edge_sign_identifiability}.

\section{Covariance Conditions Lemma~\ref{lem:no_latent_part_id_values}}\label{sec:appendix_covariance_condititions}
For the sign identifiable edges $\alpha$ from the graphs $G$ shown in Fig.~\ref{fig:confounding} and~\ref{fig:three_loop} with the matching covariance matrices $\Sigma\in\mathcal{M}^p_\alpha$, $\alpha$ is identifiable when
    \begin{itemize}
        \item for~\ref{fig:confounding} if one of the following conditions holds,
        \begin{equation}
            \begin{split}
            (c.1)&\quad  \frac{(2\rho_{hy}^2\rho_{hx}^2-\rho_{hy}^2-\rho_{hx}^2)(\rho_{hx}\rho_{hy}-\rho_{xy})}{2\rho_{hx}\rho_{hy}(\rho_{hx}^2-1)(\rho_{hy}^2-1)}\geq1,\\
            (c.2)&\quad\,\,\operatorname{sign}(\sigma_{hx}\sigma_{hy})\neq\operatorname{sign}(\sigma_{xy}),
            \\
    (c.3)&\quad \frac{\rho_{hy}\rho_{hx}}{\rho_{xy}}\geq1.
                    \end{split}
        \end{equation}
        
        \item for~\ref{fig:three_loop} if one of the following conditions hold
        \begin{equation}
            \begin{split}
                (c.1)&\quad \text{If $d>0,\,a<0$ and $b<0$, and $(-a+c)/b\leq1$}, \\
                (c.2)&\quad  \text{If $d>0,\,a>0$ and $b>0$, and $(-a+c)/b\leq1$},\\
                (c.3)&\quad  \text{If $d<0,\,a<0$ and $b>0$, and $(-a+c)/b\geq1$},\\
                (c.4)&\quad \text{If $d<0,\,a>0$ and $b<0$, and $(-a+c)/b\geq1$},\\
                (c.5)&\quad d=1,\\
            \end{split}
        \end{equation}
        where
        \begin{equation}
            \begin{split}
            a&:= \frac{\rho_{xy}^2}{1-\rho_{xy}^2}\left(\rho_{hx}-\rho_{hy}\rho_{xy}\right),\\
            b&:=\frac{\rho_{xy}\rho_{hx}}{\rho_{xy}-\rho_{hy}\rho_{hx}}\left(\rho_{hx}-\frac{\rho_{hy}}{\rho_{xy}}\right),\\
            c&:=\frac{\rho_{hy}}{\rho_{xy}}+\rho_{xy}\rho_{hy},\\
            d&:= \frac{\rho_{hx}\rho_{hy}}{\rho_{xy}}.
                    \end{split}
        \end{equation}
    \end{itemize}

\section{Algorithm for Numerical Experiments}\label{sec:appendix_algo}

\begin{algorithm}
\caption{Determining Sign Identifiability}
\begin{algorithmic}[1]
\Require Symbolic: drift matrix $A_{sym}$, diffusion matrix $D_{sym}$, ($t$-faithful) covariance matrix $\Sigma_{sym}$ and edge e
\State $N_{samples}\gets1000$
\State Identifiable$\gets 0$
\For{$i \gets 1$ to $N_{\text{samples}}$}
    \State $A\in\text{Hurwitz stable},D\in PDD\gets \operatorname{draw Param}(A_{sym},D_{sym})$
    \State $\Sigma\in PD \gets \operatorname{calcSigma} (A,D,\Sigma_{sym})$
    \If{not $\Sigma\in F_G$}
    \State $N_{samples}-=1$
    \ElsIf{ not  $\operatorname{oppSol}(\text{e}, A,\Sigma, A_{sym},D_{sym})$}
        \State Identifiable $+=1$
    \EndIf 
\EndFor
\State Fraction $\gets\text{Identifiable}/N_{samples}$
\State \Return $-1$
\end{algorithmic}
\label{algo:sign-id}
\end{algorithm}

\subsection{Numerical Benchmark of Runtime Finding an Opposite Solution}\label{app:numerical_benchmark}
To indicate the scalability of linear programming (LP) for determining the feasibility of a sign of a desired edge for a given causal structure $G$ and covariance matrix $\Sigma$ we ran an experiment. Note that this corresponds to the function $\operatorname{oppSol}(\text{e}, A,\Sigma, A_{sym},D_{sym})$ in Algorithm~\ref{algo:sign-id}. In this experiment, we determined the runtime as a function of the number of nodes for different average degrees. For each number of nodes $n$ and average degree $d$, we drew a sample of 100 random Erdős-Rényi graphs $G(n,m)$ with $m=nd$ edges. Per random graph in the sample, we tracked the wall-clock runtime of checking the feasibility of a sign (for a given causal structure $G$ and covariance matrix $\Sigma$). For each sample, we calculated the mean and the standard deviation in the runtime. The results of the experiment are shown in Table~\ref{tab:runtime}.

\begin{table}[ht]
\centering
\caption{Runtime as a function of the number of nodes for different average degrees. Values are wall-clock runtime in seconds, reported as mean $\pm$ standard deviation.}
\label{tab:runtime-nodes-avgdeg}
\begin{tabular}{rrrr}
\toprule
Nodes & Avg. degree $2$ & Avg. degree $3$ & Avg. degree $4$ \\
\midrule
10    & $(2.41 \pm 1.74)\times 10^{-3}$ & $(2.55 \pm 0.183)\times 10^{-3}$ & $(3.12 \pm 0.226)\times 10^{-3}$ \\
25    & $(5.52 \pm 0.569)\times 10^{-3}$ & $(7.06 \pm 0.740)\times 10^{-3}$ & $(9.00 \pm 0.683)\times 10^{-3}$ \\
50    & $(1.41 \pm 0.264)\times 10^{-2}$ & $(2.16 \pm 0.323)\times 10^{-2}$ & $(2.91 \pm 0.397)\times 10^{-2}$ \\
100   & $(4.20 \pm 0.0496)\times 10^{-2}$ & $(7.43 \pm 1.76)\times 10^{-2}$ & $(1.09 \pm 0.176)\times 10^{-1}$ \\
250   & $(2.68 \pm 0.0463)\times 10^{-1}$ & $(3.14 \pm 0.757)\times 10^{-1}$ & $(7.25 \pm 2.69)\times 10^{-1}$ \\
500   & $1.18 \pm 0.0388$ & $1.17 \pm 0.00915$ & $2.10 \pm 1.18$ \\
1,000 & $5.34 \pm 0.248$ & $4.86 \pm 0.0989$ & $5.25 \pm 0.190$ \\
2,500 & $37.3 \pm 2.37$ & $34.2 \pm 3.47$ & $33.8 \pm 0.746$ \\
5,000 & $159 \pm 14.9$ & $162 \pm 20.5$ & $144 \pm 22.5$ \\
\bottomrule
\end{tabular}
\label{tab:runtime}
\end{table}

\section{Proofs}
\subsection{Lemma~\ref{lem:m0_implications}}\label{sec:appendix_lemma_m0_implications}
\begin{proof}
    Throughout the proof, let \(G=(V,E)\) be a graph and \(e \in E\) a fixed edge.
    To start, let  $\Sigma\in \mathcal{M}^{+}_e\cap\mathcal{M}^{-}_e$. Then we obtain the following two solutions to the Lyapunov equation (Eq.~\eqref{eq:lyapunov})
\begin{equation}
    \begin{split}
    aA^+\Sigma+\Sigma a{A^+}^T &=-aD^+,\\
     bA^-\Sigma+\Sigma b{A^-}^T &=- bD^-,
    \end{split}
\end{equation}
with $a,b\in\mathbb{R}$, $A^k$ a Hurwitz stable matrix, and $D^k\in PDD_d$ where $k\in\{+,-\}$. The sum of the two solutions is again a valid equation. Hence,
\begin{equation}
    \begin{split}
    aA^+\Sigma+\Sigma a{A^+}^T + bA^-\Sigma+\Sigma b{A^-}^T&=-aD^+ -bD^- ,\\
    (aA^++bA^-)\Sigma+\Sigma (aA^++bA^-)^T &=-aD^+ -bD^- ,\\
    A\Sigma+\Sigma A^T&=-D,
    \end{split}
\end{equation}
where $A=aA^++bA^-$ and $D=aD^++bD^-$. For any $x\in \mathbb{R}^d$ where $x\neq 0$, with $a=t$ and $b=1-t$, $t\in[0,1]$, we have:
\begin{equation}
    \begin{split}
    x^T(tD^++(1-t)D^-)x&>0 ,\\
     tx^TD^+x+(1-t)x^TD^-x&>0.
    \end{split}
\end{equation}
Since $D^+,D^-\in PDD_d$ this is true for any choice of $t\in[0,1]$. Therefore, $D\in PDD_d$. In addition, due to $A_{ij}=0$ for any $(j,i)\not\in E$, $\operatorname{supp}(tA^++(1-t)A^-)\subseteq E$. Furthermore, we can pick some $t\in (0,1)$ such that $tA^+_e + (1-t)A^-_e=0$. Then, $\operatorname{sign}(A_e)=\operatorname{sign}(tA^+_e + (1-t)A^-_e)=0$. Finally, $\Sigma\in \mathcal{M}^{+}_e\cap\mathcal{M}^{-}_e\subseteq F_G$. Therefore, according to Definition~\ref{def:edge_signature_set}, we have $\Sigma\in\mathcal{M}^{0}_e$. To summarise, 
\begin{equation}
    \Sigma\in \mathcal{M}^{+}_e \text{ and }\Sigma\in\mathcal{M}^{-}_e \implies \Sigma\in \mathcal{M}^{0}_e.
\end{equation}

Furthermore, let $\Sigma\in \mathcal{M}^{+}_e\cap\mathcal{M}^{0}_e$. Then, we obtain the following two solutions to the Lyapunov equation (Eq.~\eqref{eq:lyapunov})
\begin{equation}
    \begin{split}
    aA^+\Sigma+\Sigma a{A^+}^T &= -aD^+ ,\\
     bA^0\Sigma+\Sigma b{A^0}^T &= -bD^0,
    \end{split}
\end{equation}
with $a,b\in\mathbb{R}$, $A^k$ a Hurwitz stable matrix and $D^K\in PD_d$ where $k\in\{+,0\}$. The sum of the two solutions is again a valid equation. Hence,
\begin{equation}
    \begin{split}
    aA^+\Sigma+\Sigma a{A^+}^T + bA^0\Sigma+\Sigma b{A^0}^T&=-aD^+-bD^0 ,\\
    (aA^++bA^0)\Sigma+\Sigma (aA^++bA^0)^T &=-aD^+-bD^0 ,\\
    A\Sigma+\Sigma A^T&=-D,
    \end{split}
\end{equation}
where $A=aA^++bA^0$ and $D=aD^++bD^0$. If we pick $a=-1$ to get
\begin{equation}
    \begin{split}
    x^T(-D^++bD^0)x&>0 ,\\
    x_i(-D^+_{ii}+bD_{ii}^0)x_i&\overset{(a)}{>}0 ,\\
    (-D^+_{ii}+bD_{ii}^0)x_i^2&>0 ,\\
    (-D^+_{ii}+bD_{ii}^0)&>0 ,\\
    bD_{ii}^0&>D^+_{ii} ,\\
    b&>\frac{D^+_{ii}}{D_{ii}^0},
    \end{split}
\end{equation}
where $(a)$ use that $D$ is diagonal. If $b>\operatorname{max}_{i}\big(D^+_{ii}/D^0_{ii}\big)$, then $D\in PDD_d$. Since $b$ is unbounded, we can pick $b$ such that $D\in PDD_d$. Moreover, since $A_{ij}=-A^+_{ij} + bA^0_{ij}$ and $b$ is still unbounded from above, we can choose $b$  sufficiently large such that $\operatorname{supp}(-A^++bA^0)= E$. Furthermore, $\operatorname{sign}(A_e)=\operatorname{sign}(-A^+_e + bA^0_e)=\operatorname{sign}(-A^+_e)=-\operatorname{sign}(A^+_e)=-$ for any choice of b. Finally, $\Sigma\in \mathcal{M}^{+}_e\cap\mathcal{M}^{0}_e\subseteq F_G$. Therefore, according to Definition~\ref{def:edge_signature_set}, if we pick $b$  such that $D\in PDD_d$ and $\operatorname{supp}(-A^++bA^0)= E$, then $\Sigma\in \mathcal{M}^{-}_e$. To summarise
\begin{equation}
    \Sigma\in \mathcal{M}^{+}_e\text{ and }\Sigma\in\mathcal{M}^{0}_e \implies \Sigma\in \mathcal{M}^{-}_e .
\end{equation}
Finally, we can analogously show, 
\begin{equation}
   \Sigma\in \mathcal{M}^{-}_e\text{ and }\Sigma\in\mathcal{M}^{0}_e \implies \Sigma\in \mathcal{M}^{+}_e .
\end{equation}
\end{proof}

\subsection{Lemma~\ref{lem:extending_m0}}\label{sec:appendix_extending_m0}
\begin{proof}
        Throughout the proof, let \(G=(V,E)\) be a graph and \(e \in E\) a fixed edge. We define the edge signature sets  for $D\in PD_d$ as follows:
\begin{definition}[PD Edge Signature Set]\label{def:pd_edge_signature_set}
    For a graph $G=(V,E)$ and edge $e\in E$, we define the edge signature set $\mathcal{M}^{k}_{G,e}$ as
    \begin{equation}
        \begin{split}
        \mathcal{M}^{k}_{G,e}:=&\{\Sigma\in PD_d\,: \exists A,D \text{ s.t. }A\Sigma + \Sigma A^T= -D;\, \\
        &D\in PD_d; \text{supp}(A)= E;\, \text{sign}(A_e)=k\},
        \end{split}
\end{equation}
    where $k\in\{+,-\}$ and
\begin{equation}
    \begin{split}
        \mathcal{M}^{0}_{G,e}:=&\{\Sigma\in PD_d\,: \exists A,D \text{ s.t. }A\Sigma + \Sigma A^T= -D;\, \\
        &D\in PD_d; \text{supp}(A)\subset E;\, \text{sign}(A_e)=0\}.
    \end{split}
\end{equation}
\end{definition}
Since we do not know of any known constraints on the (marginal) independences in the literature for a graph $G$ with $D\in PD$, we no longer enforce \textit{$t$-faithfulness} $F_G$.

    To start, let  $\Sigma\in \mathcal{M}^{+}_e\cap\mathcal{M}^{-}_e$.Then, we obtain the following two solutions to the Lyapunov equation (Eq.~\eqref{eq:lyapunov}):
\begin{equation}
    \begin{split}
    aA^+\Sigma+\Sigma a{A^+}^T &=-aD^+ ,\\
     bA^-\Sigma+\Sigma b{A^-}^T &= -bD^-,
    \end{split}
\end{equation}
with $a,b\in\mathbb{R}$, $A^k$ a Hurwitz stable matrix, and $D^k\in PD_d$ where $k\in\{+,-\}$. The sum of the two solutions is again a valid equation. Hence,
\begin{equation}
    \begin{split}
    aA^+\Sigma+\Sigma a{A^+}^T + bA^-\Sigma+\Sigma b{A^-}^T&=-aD^+-bD^- ,\\
    (aA^++bA^-)\Sigma+\Sigma (aA^++bA^-)^T &=-aD^+-bD^- ,\\
    A\Sigma+\Sigma A^T&=-D,
    \end{split}
\end{equation}
where $A=aA^++bA^-$ and $D=aD^++bD^-$. We pick $a=t$ and $b=1-t$ with $t\in[0,1]$. For any $x\in \mathbb{R}^d$ where $x\neq 0$, we have that
\begin{equation}
    \begin{split}
    x^T(tD^++(1-t)D^-)x&>0 ,\\
     tx^TD^+x+(1-t)x^TD^-x&>0.
    \end{split}
\end{equation}
Since $D^+,D^-\in PD_d$ this is true for any choice of $t\in[0,1]$, therefore $D\in PD_d$. In addition, due to $A_{ij}=0$ for any $(j,i)\not\in E$, 
$\operatorname{supp}(tA^++(1-t)A^-)\subseteq E$. Furthermore, we can pick $t$ such that $tA^+_e + (1-t)A^-_e=0$. Then, $\operatorname{sign}(A_e)=\operatorname{sign}(tA^+_e + (1-t)A^-_e)=0$. In addition, $\Sigma\in \mathcal{M}^{+}_e\cap\mathcal{M}^{-}_e\subseteq PD_d$. Therefore, according to Definition~\ref{def:pd_edge_signature_set},
we have: $\Sigma\in\mathcal{M}^{0}_e$. To summarise, 
\begin{equation}
    \begin{split}
    \Sigma\in \mathcal{M}^{+}_e \text{ and }\Sigma\in\mathcal{M}^{-}_e \implies \Sigma\in \mathcal{M}^{0}_e.
    \end{split}
\end{equation}

Furthermore, let $\Sigma\in \mathcal{M}^{+}_e\cap\mathcal{M}^{0}_e$. Then, we obtain the following two solutions to the Lyapunov equation (Eq.~\eqref{eq:lyapunov})
\begin{equation}
    \begin{split}
    aA^+\Sigma+\Sigma a{A^+}^T &=-aD^+ ,\\
     bA^0\Sigma+\Sigma b{A^0}^T &= -bD^0,
    \end{split}
\end{equation}
with $a,b\in\mathbb{R}$, $A^k$ a Hurwitz stable matrix and $D^K\in PD_d$ where $k\in\{+,0\}$. The sum of the two solutions is again a valid equation. Hence,
\begin{equation}     \begin{split}
    aA^+\Sigma+\Sigma a{A^+}^T + bA^0\Sigma+\Sigma b{A^0}^T&=-aD^+-bD^0 ,\\
    (aA^++bA^0)\Sigma+\Sigma (aA^++bA^0)^T &=-aD^+-bD^0 ,\\
    A\Sigma+\Sigma A^T&=-D,
\end{split} \end{equation}
where $A=aA^++bA^0$ and $D=aD^++bD^0$. 
Let $a=-1$. For any $x\in \mathbb{R}^d$ where $x\neq 0$, we have that 
\begin{equation}     \begin{split}
    x^T(-D^++bD^0)x&>0 ,\\
    bx^TD^0x&>x^TD^+x ,\\
    b&>\frac{x^TD^+x}{x^TD^0x}  ,\\
     b&>\frac{x^TD^+x}{x^TD^0x}\frac{x^Tx}{x^Tx} ,\\
     b&\overset{(a)}{>}\frac{R(D^+,x)}{R(D^0,x)} ,\\
     b&\overset{(b)}{>}\frac{\lambda_{max}^+}{\lambda_{min}^0},
\end{split} \end{equation}
where $(a)$ use the definition of the Rayleigh quotient $R$, $(b)$ use that $\lambda_{max}^+/\lambda_{min}^0\geq R(D^+,x)/R(D^0,x)$, and $\lambda^k$ are the eigenvalues of $D^k$. Since $D^k\in PD_d$ we have that $\lambda_{min}^k>0$. In addition, as $b$ is unbounded, we can pick a value of $b$ that satisfies the inequality. Therefore, we can always pick $b$ such that $D\in PD_d$. Moreover, since $A_{ij}=-A^+_{ij} + bA^0_{ij}$ and $b$ is still unbounded from above, we can also pick $b$ sufficiently large to get $\operatorname{supp}(-A^++bA^0)= E$. Furthermore, $\operatorname{sign}(A_e)=\operatorname{sign}(-A^+_e + bA^0_e)=\operatorname{sign}(-A^+_e)=-\operatorname{sign}(A^+_e)=-$ for any choice of b. Finally, $\Sigma\in \mathcal{M}^{+}_e\cap\mathcal{M}^{0}_e\subseteq PD_d$. Therefore, according to Definition~\ref{def:pd_edge_signature_set}, if we pick $b$  such that $D\in PD_d$ and $\operatorname{supp}(-A^++bA^0)= E$, then $\Sigma\in \mathcal{M}^{-}_e$. To summarise
\begin{equation}
    \Sigma\in \mathcal{M}^{+}_e\text{ and }\Sigma\in\mathcal{M}^{0}_e \implies \Sigma\in \mathcal{M}^{-}_e .
\end{equation}
Moreover, we can analogously show, 
\begin{equation}
   \Sigma\in \mathcal{M}^{-}_e\text{ and }\Sigma\in\mathcal{M}^{0}_e \implies \Sigma\in \mathcal{M}^{+}_e .
\end{equation}

We have now proven that Lemma~\ref{lem:m0_implications} can be extended to $D\in PD_d$. 

The proof for the $\mathcal{M}^0$ criterion Theorem~\ref{thm:m0-criterion} follows from the same arguments when using the adjusted Definition~\ref{def:pd_edge_signature_set} and extend form of Lemma~\ref{lem:m0_implications} in the original proof (see below Theorem~\ref{thm:m0-criterion}).
\end{proof}

\subsection{Lemma~\ref{lem:edge_deletion_Fg}}\label{app:proof_edge_deletion_Fg}

\begin{proof}
Let $G=(V,E)$, $G'=(V,E')$ and $G''=(V,E'')$ with
$E''\subset E'\subset E$, i.e. $G'$ is obtained by deleting edges from $G$, and $G''$ by deleting edges from $G'$. Denote $MI_H$ as the set of pairs of marginally independent nodes of a graph $H$. The set of pairs of marginally independent
nodes $MI_H$ can only increase or stay the same due to the deletion of edges, since this deletion can only remove ancestors and not add new ones. Therefore, $MI_{G}\subseteq MI_{G'}\subseteq MI_{G''}$. In addition, note that for two graphs $H$ and $H'$ to have the same $t$-faithful set $F_H$, they need to have $MI_H=MI_{H'}$. Therefore, if $F_G \neq F_{G'}$, then $MI_{G}\subset MI_{G'}\subseteq MI_{G''}$. Hence, $MI_{G}\subset MI_{G''}$ and we get the desired result $F_G \neq F_{G''}$.
\end{proof} 

\subsection{Theorem~\ref{thm:no_latent_id}}\label{sec:proof_no_latent}
In the proofs, we solve the equations that result from comparing the matrix entries from the left and right hand side in the Lyapunov equation (Eq.~\eqref{eq:lyapunov}). We use that matrices on the left and right hand side are symmetric, such that a $d\times d$ matrix results in $a=d(d-1)/2$ equations. To facilitate reading in and comparisons between the proofs, we use the convention of giving the sets of equations consistent numbers $(i)$ to the Roman number of equations $a $ in each proof.

Furthermore, we use the property of triangular matrices that their eigenvalues are on the diagonal. For a triangular Hurwitz stable drift matrix $A\in\mathbb{R}^{d\times d}$, this therefore means that the diagonals all have to be negative. 

In addition, we use that any matrix $B\in PD_d$ has positive diagonal entries, i.e., $B_{ii}>0$. Therefore the diagonal matrix has strict positive diagonals, i.e., $D=D_{ii}>0$. In addition, we will always assume that the covariance matrix $\Sigma\in\mathcal{M}^p_{G,\alpha}$ such that $\Sigma$ is $t$-faithful. Therefore $\Sigma_{ii}>0$. Another property of covariance matrices without exact linear dependencies between random variables $X_i$ and $X_j$, as is the case in our OU process Eq.~\eqref{eq:OU}, is that $|\Sigma_{ij}|<\sqrt{\Sigma_{ii}\Sigma_{jj}}$. For the off-diagonals in the covariance matrix $\Sigma$ we will therefore use the notation that $\Sigma_{ij}=\rho_{ij}\sqrt{\Sigma_{ii}\Sigma_{jj}}$ with $\rho_{ij}\in(-1,1)$ being the correlation coefficient. 

Finally, we can write the covariance matrix as $\Sigma = K R K$, where $K$ is diagonal with  $K_{ii}=\sqrt{\sigma_{ii}}>0$ such that $K\in PDD_d$, and
\begin{equation}
    R_{ij} =
\begin{cases}
1 & \text{if } i=j ,\\
\rho_{ij} & \text{if } i\neq j.
\end{cases}
\end{equation} This means that if and only if $R\in PD_d$, then $\Sigma\in PD_d$. Therefore, if $\Sigma\in PD_3$ we have
\begin{equation}
    R =
\begin{pmatrix}
1 & \rho_{12} & \rho_{13} ,\\
\rho_{12} & 1 & \rho_{23} ,\\
\rho_{13} & \rho_{23} & 1
\end{pmatrix},
\end{equation}
and, by Sylvester's criterion \citep{Horn2012-xj}, $R\in PD_d$ if and only if $1-\rho_{12}^2>0$ and $1+2\rho_{12}\rho_{13}\rho_{23}-\big(\rho_{12}^2+\rho_{13}^2+\rho_{23}^2\big)>0$. Since $|\rho_{ij}|<1$, the first condition is always satisfied, we only need to ensure that  
\begin{equation}\label{eq:sylvester_criterion_3__withtout_latent}
    1+2\rho_{12}\rho_{13}\rho_{23}-\big(\rho_{12}^2+\rho_{13}^2+\rho_{23}^2\big)>0, 
\end{equation}
to show that $R\in PD_3$ and thus $\Sigma\in PD_3$.

\subsubsection{Cause and Effect}\label{sec:proof_no_laten_cause_n_effect}
\begin{proof}
 We adopt the assumptions and conventions stated at the start of this section. Let $G=(V,E)$ be the graph of Fig.~\ref{fig:hy}. The nodes $V=\{H,Y\}$ correspond to the stationary SDE process $X=(H,Y)^T$, then the Hurwitz stable drift matrix $A$ respecting the causal structure of graph $G$ is
\begin{equation}
    A=\left[\begin{matrix}s_{h} & 0,\\\alpha & s_{y}\end{matrix}\right],
\end{equation}
the diagonal diffusion matrix is
\begin{equation}
    D=\left[\begin{matrix}d_{h} & 0,\\0 & d_{y}\end{matrix}\right]\in PDD_2,
\end{equation}
and the covariance matrix is 
\begin{equation}
    \Sigma=\left[\begin{matrix}\sigma_{hh} & \sigma_{hy},\\\sigma_{hy} & \sigma_{yy}\end{matrix}\right]\in\mathcal{M}^p_{G,\alpha}.
\end{equation}

The resulting set of equations to solve is
\begin{equation}     \begin{split}
(i)\;& - d_{h}=2 s_{h} \sigma_{hh}  ,\\
(ii)\;& 0=\alpha \sigma_{hh} + s_{h} \sigma_{hy} + s_{y} \sigma_{hy}  ,\\
(iii)\;& -d_{y}=2\alpha \sigma_{hy} + 2 s_{y} \sigma_{yy}.
\end{split} \end{equation}
Eq.~(ii) is satisfied if and only if $\alpha = b_1 \sigma_{hy}$, where $b_1 = -\frac{s_y + s_h}{\sigma_{hh}}$. Since $s_y,s_h<0$ and $\sigma_{hh}>0$, we have that $b_1>0$ and thus $\operatorname{sign}(\alpha) = \operatorname{sign}(b_1\sigma_{hy}) = \operatorname{sign}(\sigma_{hy})$.  Since $\Sigma\in \mathcal{M}^p_{G,\alpha}$, we have $\sigma_{hy}\neq0$ meaning that the sign of $\alpha$ is $+$ or $-$. Therefore there exists no $\Sigma\in\mathcal{M}^0_{G,\alpha}$, such that by virtue of the $\mathcal{M}^0$ criterion Theorem~\ref{thm:m0-criterion}, for any $\Sigma\in \mathcal{M}^p_{G,\alpha}$, 
the sign of edge $\alpha$ in graph $G$ is identifiable.

\end{proof}

\subsubsection{Chain}\label{sec:appendix_proof_no_latent_chain}

\begin{proof}
     We adopt the assumptions and conventions stated at the start of this section. Let $G=(V,E)$ be the graph of Fig.~\ref{fig:chain}. The nodes $V=\{H,X,Y\}$ correspond to the SDE process $X=(H,X,Y)^T$, then the Hurwitz stable drift matrix $A$ respecting the causal structure of graph $G$ is
\begin{equation}
    A=\left[\begin{matrix}s_{x} & 0 & 0,\\\beta & s_{h} & 0,\\0 & \alpha & s_{y}\end{matrix}\right],
\end{equation}
the diagonal diffusion matrix is 
\begin{equation}
    D=\left[\begin{matrix}d_h & 0 & 0,\\0 & d_{x} & 0,\\0 & 0 & d_{y}\end{matrix}\right]\in PDD_3,
\end{equation}
and the $t$-faithful covariance matrix is 
\begin{equation}
    \Sigma=\left[\begin{matrix}\sigma_{hh} & \sigma_{hx} & \sigma_{hy},\\\sigma_{hx} & \sigma_{xx} & \sigma_{xy},\\\sigma_{hy} & \sigma_{xy} & \sigma_{yy}\end{matrix}\right]\in\mathcal{M}^p_{G,\alpha}.
\end{equation}
The resulting set of equations to solve is
\begin{equation}     \begin{split}
(i)\;& - d_{h} = 2 s_{h} \sigma_{hh}  ,\\
(ii)\;& 0 = \beta \sigma_{hh} + s_{x} \sigma_{hx} + s_{h} \sigma_{hx}  ,\\
(iii)\;& - d_{x} = 2 \beta \sigma_{hx} + 2 s_{x} \sigma_{xx}  ,\\
(iv)\;& 0 = \alpha \sigma_{hx} + s_{y} \sigma_{hy} + s_{h} \sigma_{hy}  ,\\
(v)\;& 0 = \alpha \sigma_{xx} + \beta \sigma_{hy} + s_{x} \sigma_{xy} + s_{y} \sigma_{xy}  ,\\
(vi)\;& - d_{y} = 2 \alpha \sigma_{xy} + 2 s_{y} \sigma_{yy}.
\end{split} \end{equation}

Analogous to the proof in~\ref{sec:proof_no_laten_cause_n_effect}, Eq.~(iv) is satisfied if and only if $\alpha = b_1 \sigma_{hy}/\sigma_{hx}$, where $b_1 = -\big(s_y + s_h\big)$. Since $s_y,s_h<0$, we have that $b_1>0$. Therefore, $\operatorname{sign}(\alpha)=\operatorname{sign}(b_1\sigma_{hy})/\operatorname{sign}(\sigma_{hx})=\operatorname{sign}(\sigma_{hy})/\operatorname{sign}(\sigma_{hx})$. Since $\Sigma\in \mathcal{M}^p_{G,\alpha}$, we have $\sigma_{hy}\neq0$ meaning that the sign of $\alpha$ is $+$ or $-$. Therefore there exists no $\Sigma\in\mathcal{M}^0_{G,\alpha}$. According to the $\mathcal{M}^0$ criterion Theorem~\ref{thm:m0-criterion}, the sign of edge $\alpha$ in graph $G$ is identifiable.
\end{proof}

\subsubsection{Confounding}\label{sec:appendix_proof_no_latent_confoundin}
\begin{proof}
    We adopt the assumptions and conventions stated at the start of this section. Let $G=(V,E)$ be the graph of Fig.~\ref{fig:confounding}. The nodes $V=\{H,X,Y\}$ correspond to the SDE process $X=(H,X,Y)^T$, then the Hurwitz stable drift matrix $A$ respecting the causal structure of graph $G$ is
\begin{equation}
    A=\left[\begin{matrix}s_{h} & 0 & 0,\\\gamma & s_{x} & 0,\\\delta & \alpha & s_{y}\end{matrix}\right],
\end{equation}
the diagonal diffusion matrix is 
\begin{equation}
    D=\left[\begin{matrix}d_h & 0 & 0,\\0 & d_{x} & 0,\\0 & 0 & d_{y}\end{matrix}\right]\in PDD_3,
\end{equation}
and the $t$-faithful covariance matrix is 
\begin{equation}
    \Sigma=\left[\begin{matrix}\sigma_{hh} & \sigma_{hx} & \sigma_{hy},\\\sigma_{hx} & \sigma_{xx} & \sigma_{xy},\\\sigma_{hy} & \sigma_{xy} & \sigma_{yy}\end{matrix}\right]\in\mathcal{M}^p_{G,\alpha}.
\end{equation}

In the numerical Section~\ref{sec:numerical_edge} we find examples of $\Sigma,\Sigma'\in\mathcal{M}^p_{G,\alpha}$ where $\Sigma$ is identifiable, and $\Sigma'$ is non-identifiable. In other words, we show there exist covariance matrices in both $\mathcal{M}^\pm_{G,\alpha}$ and only in either $\mathcal{M}^+_{G,\alpha}$ or $\mathcal{M}^-_{G,\alpha}$. Hence, by Definition~\ref{def:edge_identifiability}, the sign of edge $\alpha$ for graph $G$ is partially identifiable.

To exclude that these examples are some degenerate cases, we show that the set of covariance matrices $\Sigma$ yielding identifiability (respectively, non-identifiability) is not a measure-zero subset in $\mathcal{M}^p_{G,\alpha}$. To that end, we first characterize $\Sigma\in\mathcal{M}^p_{G,\alpha}$. This means that we want to show that there exists a Hurwitz drift matrix $A$ with $\operatorname{supp}(A)=E$ and a diagonal diffusion matrix $D\in PDD_3$ such that the Lyapunov equation (Eq.~\eqref{eq:lyapunov}) is satisfied.  

From \cite{dettling2023identifiability} Corollary~5.4, we know that for the set
 \begin{equation}\label{eq:lem_fg_proof_mgd_set}
     \mathcal{M}_{G,D}:=\set{\Sigma \in PD_d: \exists A \text{ such that }  A\Sigma+\Sigma A^T=-D; \operatorname{supp}(A)\subseteq E},
\end{equation}
we have $\mathcal{M}_{G,D}=PD_d$ for the graph $G$ and any given $D\in PD_d$. Comparing $\mathcal{M}_{G,D}$ with the edge signature set (Definition~\ref{def:edge_signature_set}) and the resulting possibility set (Definition~\ref{def:possible_edge_set}), we observe that they are structurally similar. If the result $\mathcal{M}_{G,D}=PD_d$ could be strengthened to require $\operatorname{supp}(A)=E$, then, since we allow any $D\in PDD_d$ and $F_G\subset PD_d$, it would follow that
\begin{equation}
\mathcal{M}^p_{G,\alpha}=F_G.
\end{equation}

To show this, we start from the set of equations resulting from the Lyapunov equation.
\begin{equation}\label{eq:proof_confonounder_lypaunov_general}
\begin{split}
(i)\;& -d_h  =  2 s_h \sigma_{hh} ,\\
(ii)\;& 0=\gamma \sigma_{hh} + (s_h+s_x) \sigma_{hx}  ,\\
(iii)\;& -d_x =2 \gamma \sigma_{hx} + 2 s_x \sigma_{xx}  ,\\
(iv)\;& 0=\alpha \sigma_{hx} + \delta \sigma_{hh} + (s_h+s_y) \sigma_{hy}  ,\\
(v)\;& 0=\alpha \sigma_{xx} + \delta \sigma_{hx} + \gamma \sigma_{hy} + (s_x+s_y) \sigma_{xy}  ,\\
(vi)\;& -d_y=2 \alpha \sigma_{xy} + 2 \delta \sigma_{hy} + 2 s_y \sigma_{yy}.
\end{split}
\end{equation}

Since $d_h,\sigma_{hh}>0$ and $s_h<0$, we have that Eq.~$(i)$ is always satisfied.  Eq.~(ii) is satisfied if and only if, $\gamma = -(s_h+s_x)\sigma_{hx}/\sigma_{hh}$. Eq.~$(iii)$ is satisfied in and only if,
\begin{equation}\label{eq:proof_confouding_iii_derivation}
    \begin{split}
 2 \frac{-(s_h+s_x)}{\sigma_{hh}} \sigma_{hx}^2 + 2 s_x \sigma_{xx} &= -d_x ,\\
\iff2 \frac{-(s_h+s_x)}{\sigma_{hh}}\sigma_{hx}^2 + 2 s_x \sigma_{xx} &\overset{(a)}{<} 0,\\
\frac{-(s_h+s_x)}{\sigma_{hh}} \sigma_{hx}^2 + s_x \sigma_{xx} &<0 ,\\
\frac{-(s_h+s_x)}{\sigma_{hh}} \rho_{hx}^2 \sigma_{hh} \sigma_{xx}+s_x \sigma_{xx}&\overset{(b)}{<}0  ,\\
-(s_h+s_x)\rho_{hx}^2 \sigma_{xx} +s_x \sigma_{xx}&<0  ,\\
-(s_h+s_x)\rho_{hx}^2+s_x&<0,\\
s_x(1-\rho_{hx}^2) &< s_h\rho_{hx}^2,\\
s_x &< \frac{s_h\rho_{hx}^2}{1-\rho_{hx}^2}
= -\,\frac{s_h\rho_{hx}^2}{\rho_{hx}^2-1},
    \end{split}
\end{equation}
where $(a)$ we us that $d_x>0$ and $(b)$ we substitute $\sigma_{hx}^2=\rho_{hx}^2 \sigma_{hh} \sigma_{xx}$. 
Since $s_x,s_h<0$ and $\rho_{hx}^2<1$, we have $1-\rho_{hx}^2>0$, and hence $\rho_{hx}^2/\big(\rho_{hx}^2-1\big)>0$. Therefore, the inequality demands that
\begin{equation}
    \left|-\frac{s_h \rho_{hx}^2}{\rho_{hx}^2-1}\right| < |s_x|,
\end{equation}
and we can rewrite it as
\begin{equation}
s_x=-b_1 s_h\frac{\rho_{hx}^2}{\rho_{hx}^2-1},
\qquad b_1>1.
\end{equation}

Eq.~$(iv)$ is satisfied if and only if
\begin{equation}
    \alpha=\frac{- \delta \sigma_{hh} - (s_{h}+ s_{y}) \sigma_{hy}}{\sigma_{hx}}.
\end{equation}

Eq.~$(v)$ is satisfied if and only if 
\begin{equation}
    \begin{split}
        -\delta\sigma_{hx}&=\alpha \sigma_{xx} + \gamma \sigma_{hy} + (s_x+s_y) \sigma_{xy} ,\\
         -\delta\sigma_{hx}&\overset{(a)}{=}\frac{- \delta \sigma_{hh} - (s_{h}+ s_{y}) \sigma_{hy}}{\sigma_{hx}} \sigma_{xx} + \gamma \sigma_{hy} + (-b_1 s_h\frac{\rho_{hx}^2}{\rho_{hx}^2-1}+s_y) \sigma_{xy} ,\\
         \delta\big(\sigma_{hh}\sigma_{xx}/\sigma_{hx}-\sigma_{hx}\Big)&=\frac{- (s_{h}+ s_{y}) \sigma_{hy}}{\sigma_{hx}} \sigma_{xx} + \gamma \sigma_{hy} + (-b_1 s_h\frac{\rho_{hx}^2}{\rho_{hx}^2-1}+s_y) \sigma_{xy} ,\\
         \delta&=\big(\frac{- (s_{h}+ s_{y}) \sigma_{hy}}{\sigma_{hx}} \sigma_{xx} + \gamma \sigma_{hy} + (-b_1 s_h\frac{\rho_{hx}^2}{\rho_{hx}^2-1}+s_y) \sigma_{xy}\big)/\big(\sigma_{hh}\sigma_{xx}/\sigma_{hx}-\sigma_{hx}\Big) ,\\
         \delta&=\big(- (s_{h}+ s_{y}) \sigma_{hy} \sigma_{xx} + \gamma \sigma_{hy}\sigma_{hx} + (-b_1 s_h\frac{\rho_{hx}^2}{\rho_{hx}^2-1}+s_y) \sigma_{xy}\sigma_{hx}\big)/\big(\sigma_{hh}\sigma_{xx}-\sigma_{hx}^2\Big) ,\\
    \end{split}
\end{equation}
where $(a)$ we substituted the values for $\alpha$ and $s_x$.

Since $d_y>0$, Eq.~$(vi)$ is satisfied if and only if 
\begin{equation}
    \begin{split}
         0&>2 \alpha \sigma_{xy} + 2 \delta \sigma_{hy} + 2 s_y\sigma_{yy},\\
         0&> \alpha \sigma_{xy} + \delta \sigma_{hy} +  s_y\sigma_{yy},\\
         0&\overset{(a)}{>} \frac{- \delta \sigma_{hh} - (s_{h}+ s_{y}) \sigma_{hy}}{\sigma_{hx}}\sigma_{xy} + \delta \sigma_{hy} +  s_y\sigma_{yy},\\
          0&>  \delta \big(\sigma_{hy} -\frac{\sigma_{hh}\sigma_{xy} }{\sigma_{hx}}\sigma_{xy}\big)
          -(s_{h}+ s_{y})\frac{\sigma_{xy} \sigma_{hy}}{\sigma_{hx}}+  s_y\sigma_{yy},\\
          0&\overset{(b)}{>} \big(- (s_{h}+ s_{y}) \sigma_{hy} \sigma_{xx} + \gamma \sigma_{hy}\sigma_{hx} + (-b_1 s_h\frac{\rho_{hx}^2}{\rho_{hx}^2-1}+s_y) \sigma_{xy}\sigma_{hx}\big)\big( \sigma_{hy}-\frac{\sigma_{hh}\sigma_{xy} }{\sigma_{hx}}\big)/\big(\sigma_{hh}\sigma_{xx}-\sigma_{hx}^2\Big) \\
          &- (s_{h}+ s_{y}) \sigma_{hy} +  s_y\sigma_{yy},\\
          0&\overset{(c)}{>}f(\Sigma,b_1,s_h,s_y).          
    \end{split}
\end{equation}
where $(a)$ we substituted the values for $\alpha$, $(b)$ we substituted the value of $\delta$ and $(c)$ we introduced the function $f$ to denote the lengthy the right hand side.

In addition, we can write the expressions for $\alpha,\gamma$ and $\delta$ in a different way when starting from Eq.~$(ii,iv)$ and $(v)$.
We begin by substituting $s_x$ in to $\gamma$, then
\begin{equation}
   \begin{split}
        \gamma&=-(s_h+-b_1 s_h\frac{\rho_{hx}^2}{\rho_{hx}^2-1})\frac{\sigma_{hx}}{\sigma_{hh}} ,\\
        &=-(1+-b_1\frac{\rho_{hx}^2}{\rho_{hx}^2-1}) s_h\frac{\sigma_{hx}}{\sigma_{hh}}, \\
        &\overset{(a)}{=}(1-t\frac{\rho_{hx}^2}{\rho_{hx}^2-1}) t\frac{\sigma_{hx}}{\sigma_{hh}},
   \end{split}
\end{equation}
 where $(a)$ we pick $b_1=t$ and $s_h=-t$, with $t>1$ satisfying both $s_h<0$ and $b_1>1$, then $\gamma$ only depends on $t$ and we can write $\gamma(t)$. Furthermore, since $t,\sigma_{hh}>0$ and $\sigma_{hx}\neq0$, $\gamma=0$ if and only if $t=\big(\rho_{hx}^2-1\big)/\rho_{hx}^2$. With this choice of $b_1=t$ and $s_h=-t$, $s_x$ also only depends on $t$, i.e., 
 \begin{equation}
     s_x=t^2\frac{\rho_{hx}^2}{\rho_{hx}^2-1}.
 \end{equation}
We then pick $s_y=-(t+\varepsilon)$, where $\varepsilon>0$.
 
Starting from Eq.~\eqref{eq:proof_confonounder_lypaunov_general}, we can write Eq.~$(iv)$ and $(v)$ as a system of linear equations
\begin{equation}\label{eq:confounding_2x2_system}
\begin{pmatrix}
\sigma_{hh} & \sigma_{hx}\\
\sigma_{hx} & \sigma_{xx}
\end{pmatrix}
\begin{pmatrix}
\delta\\
\alpha
\end{pmatrix}
=
\begin{pmatrix}
-(s_h+s_y)\sigma_{hy}\\
-\,\gamma\sigma_{hy}-(s_x+s_y)\sigma_{xy}
\end{pmatrix}.
\end{equation}

Moreover, Eq.~in~\eqref{eq:confounding_2x2_system} the coefficient matrix
\begin{equation}
M:=\begin{pmatrix}
\sigma_{hh} & \sigma_{hx}\\
\sigma_{hx} & \sigma_{xx}
\end{pmatrix}
\end{equation}
does not depend on $\varepsilon$, whereas the right-hand side does. 
Using our choices of $s_h,\,s_x$ and $s_y$ we have
\begin{equation}
-(s_h+s_y)\sigma_{hy}=(2t+\varepsilon)\sigma_{hy},
\qquad
-(s_x+s_y)\sigma_{xy}=(-t^2\frac{\rho_{hx}^2}{\rho_{hx}^2-1}+t+\varepsilon)\sigma_{xy}.
\end{equation}
Moreover, for $\gamma(t)$ we obtained
\begin{equation}
\gamma=(1-t\frac{\rho_{hx}^2}{\rho_{hx}^2-1}) t\frac{\sigma_{hx}}{\sigma_{hh}}=\big(t-t^2\frac{\rho_{hx}^2}{\rho_{hx}^2-1}\big) \frac{\sigma_{hx}}{\sigma_{hh}},
\end{equation}
which is independent of $\varepsilon$. Hence the right-hand side of Eq.~\eqref{eq:confounding_2x2_system} equals
\begin{align}
r(\varepsilon)
&=\nonumber
\begin{pmatrix}
-(s_h+s_y)\sigma_{hy}\\
-\,\gamma\sigma_{hy}-(s_x+s_y)\sigma_{xy}
\end{pmatrix}\\
&=\nonumber
\begin{pmatrix}
(2t+\varepsilon)\sigma_{hy}\\[2pt]
-\;\big(t-t^2\frac{\rho_{hx}^2}{\rho_{hx}^2-1}\big) \frac{\sigma_{hx}}{\sigma_{hh}}\sigma_{hy}+\big(t-t^2\frac{\rho_{hx}^2}{\rho_{hx}^2-1}+\varepsilon\big)\sigma_{xy}
\end{pmatrix}\\
&=
\underbrace{\begin{pmatrix}
2t\,\sigma_{hy}\\[2pt]
\big(t-t^2\frac{\rho_{hx}^2}{\rho_{hx}^2-1}\big)\big(\sigma_{xy}-\frac{\sigma_{hx}\sigma_{hy}}{\sigma_{hh}}\big)
\end{pmatrix}}_{=:~r_0}
\;+\;
\varepsilon\underbrace{\begin{pmatrix}
\sigma_{hy}\\[2pt]
\sigma_{xy}
\end{pmatrix}}_{=:~r_1}.
\end{align}

Therefore,
\begin{equation}
r(\varepsilon)=r_0+\varepsilon r_1,
\end{equation}
for fixed vectors $r_0,r_1\in\mathbb{R}^2$ (depending on $t$ and $\Sigma$ but not on $\varepsilon$).
Since $\Sigma\succ 0$, $M$ is positive definite.
Therefore,
\begin{equation}
\begin{pmatrix} \delta(\varepsilon)\\ \alpha(\varepsilon)\end{pmatrix}
=M^{-1}r(\varepsilon)
=M^{-1}r_0+\varepsilon\,M^{-1}r_1,
\end{equation}
so both $\delta(\varepsilon)$ and $\alpha(\varepsilon)$ are affine functions of $\varepsilon$. 

Now suppose $\delta(\varepsilon)$ were identically zero for all $\varepsilon$. Then both its constant and linear coefficients would be zero, i.e.,
$e_1^\top M^{-1}r_0=e_1^\top M^{-1}r_1=0$, which would force $(M^{-1}r_1)_1=0$.
But $r_1=(\sigma_{hy},\sigma_{xy})^\top$, and since $M$ is invertible this would imply $(\sigma_{hy},\sigma_{xy})^\top=0$, contradicting $\sigma_{hy}\neq 0$ and $\sigma_{xy}\neq 0$ (which hold for $\Sigma\in F_G$ in the confounding graph). Hence $\delta(\varepsilon)$ is a non-constant affine function and can vanish for at most one value of $\varepsilon$. The same argument applies to $\alpha(\varepsilon)$.

Consequently, there are at most two values of $\varepsilon$ for which either $\delta(\varepsilon)=0$ or $\alpha(\varepsilon)=0$. Choosing any $\varepsilon>0$ different from these values yields
\begin{equation}
\delta(\varepsilon)\neq 0
\qquad\text{and}\qquad
\alpha(\varepsilon)\neq 0.
\end{equation}

Therefore, to summarise, if we choose 
\begin{equation}
t \neq \big(\rho_{hx}^2-1\big)/\rho_{hx}^2
\end{equation}
and $\varepsilon$ different from the two corresponding conflicting values, then $\operatorname{supp}(A)=E$.

Since $f(\Sigma,-t,t,t+\varepsilon)$ is a strict inequality and a continuous function (in fact, a multivariate polynomial) in $(b_1,s_h,s_y)$, we may choose $t$ and $\varepsilon$ at positive distance $\varepsilon'>0$ from their conflicting values, while still satisfying $f(\Sigma,-t,t,t+\varepsilon)<0$. Hence, whenever $f(\Sigma,-t,t,t+\varepsilon)$ holds with $\operatorname{supp}(A)\subseteq E$, we can perturb $t$ and $\varepsilon$ slightly so that the inequality remains satisfied while ensuring $\operatorname{supp}(A)=E$.

Using the aforementioned result that $\mathcal{M}_{G,D}=PD_d$, it follows that for every $\Sigma\in PD_3$ there exists a feasible choice of parameters satisfying
\begin{equation}
    f(\Sigma,-t,t,t+\varepsilon)<0,
\end{equation}
with $\operatorname{supp}(A)\subseteq E$.

If the corresponding choice of $t$ and $\varepsilon$ yields $\operatorname{supp}(A)\subset E$, we can perturb them to $\bar t,\bar\varepsilon$ such that $\operatorname{supp}(A)=E$ while preserving the strict inequality, by continuity of $f$.
Consequently, we obtain
\begin{equation}
    \mathcal{M}^p_{G,\alpha}=F_G.
\end{equation}

Next we characterize models in $\mathcal{M}^0_{G,\alpha}$. Note that now, since $\alpha=0$, we only require $\operatorname{supp}(A)\subset E$. We set $\alpha=0$, which simplifies Eq. $(iv),(v)$ and $(vi)$ to
\begin{equation}
\begin{split}
(iv)\;& \delta \sigma_{hh} + (s_h+s_y) \sigma_{hy} = 0
,\\
(v)\;& \delta \sigma_{xx} + \gamma \sigma_{hy} + (s_x+s_y) \sigma_{xy} = 0 ,\\
(vi)\;& 2 \delta \sigma_{hy} + 2 s_y \sigma_{yy} = -d_y,
\end{split}
\end{equation}
The other Eq.~$(i-iii)$ remain the same as before, meaning that Eq.~$(i)$ is always satisfied, Eq.~$(ii)$ is satisfied if and only if $\gamma = -(s_h+s_x)/\sigma_{hh} \sigma_{hx}$ and  Eq.~
$(iii)$ is satisfied if and only if $s_x=-b_1s_h\rho_{hx}^2/\big(\rho_{hx}^2-1\big)$ with $b_1>1$.

From the new set of equations with $\alpha=0$, we see that Eq.~(iv) is satisfied if and only if $\delta = b_2 \sigma_{hy}$ where $b_2 = -(s_h+s_y)/\sigma_{hh}>0$.

Therefore, it remains to satisfy the two Eq. $(v)$ and $(vi)$. In these equations, we substitute $\delta=b_2 \sigma_{hy},\ \gamma=b_1 \sigma_{hx}$.

Eq. $(v)$ becomes
\begin{equation}
    \begin{split}
 b_2 \sigma_{hy}\sigma_{hx} + b_1 \sigma_{hx}\sigma_{hy} + (s_x+s_y)\sigma_{xy} &= 0  ,\\
-(s_h+s_y)\rho_{hx}\rho_{hy}\sqrt{\sigma_{xx}\sigma_{yy}}
-(s_h+s_x)\rho_{hx}\rho_{hy}\sqrt{\sigma_{xx}\sigma_{yy}}
+(s_x+s_y)\rho_{xy}\sqrt{\sigma_{xx}\sigma_{yy}}&\overset{(a)}{=}0 ,\\
\Big[-(2s_h+s_x+s_y)\rho_{hx}\rho_{hy}
+(s_x+s_y)\rho_{xy}\Big]\sqrt{\sigma_{xx}\sigma_{yy}}&=0 ,\\
-(2s_h+s_x+s_y)\rho_{hx}\rho_{hy}+(s_x+s_y)\rho_{xy}&=0,
    \end{split}
\end{equation}
where $(a)$ $b_2 = -\big(s_h+s_y\big)/\sigma_{hh},
\, b_1=-(s_h+s_x)/\sigma_{hh}$ and $\sigma_{ij}=\rho_{ij}\sqrt{\sigma_{ii}\sigma_{jj}}$.
Note that in the final line, due to $-(2s_h+s_x+s_y)>0$ and $(s_x+s_y)<0$, $\,\operatorname{sign}(\rho_{hx}\rho_{hy})\ne\operatorname{sign}(\rho_{xy})$ leads to a contradiction. Therefore, to be in $\mathcal{M}^0_{G,\alpha}$, we should have $\operatorname{sign}(\sigma_{hx}\sigma_{hy})=\operatorname{sign}(\sigma_{xy})$.

Eq. $(vi)$ becomes
\begin{equation}
    \begin{split}
2 b_2 \sigma_{hy}^2 + 2 s_y \sigma_{yy} &= -d_y ,\\
-(s_h+s_y)\rho_{hy}^2+s_y&\overset{(a)}{<}0
,
    \end{split}
\end{equation}
where $(a)$ uses the same steps as in Eq.~\eqref{eq:proof_confouding_iii_derivation}, $\,b_2 = -\big(s_h+s_y\big)/\sigma_{hh}$ and $\sigma_{ij}=\rho_{ij}\sqrt{\sigma_{ii}\sigma_{jj}}$. Which analogously yields
\begin{equation}
s_y=-b_3 s_h\frac{\rho_{hy}^2}{\rho_{hy}^2-1},
\end{equation}
with $b_3>1$.

Summarising, $s_y$ and $s_x$ can be expressed as
\begin{equation}
    s_y=-b_3 s_h\frac{\rho_{hy}^2}{\rho_{hy}^2-1},
\qquad\text{and}\qquad
s_x=-b_1 s_h\frac{\rho_{hx}^2}{\rho_{hx}^2-1},
\end{equation}
with $b_1,b_3>1$

Substituting this into Eq. $(v)$ yields
\begin{equation}\label{eq:proof_confounding_negative_requirement}
    \begin{split}
    0&=-\Big(2s_h-b_3 s_h\frac{\rho_{hy}^2}{\rho_{hy}^2-1}
-b_1 s_h\frac{\rho_{hx}^2}{\rho_{hx}^2-1}\Big)\rho_{hx}\rho_{hy}
+\Big(-b_3 s_h\frac{\rho_{hy}^2}{\rho_{hy}^2-1}
-b_1 s_h\frac{\rho_{hx}^2}{\rho_{hx}^2-1}\Big)\rho_{xy} ,\\
0&=-2\rho_{hx}\rho_{hy} + \Big(b_3\frac{\rho_{hy}^2}{\rho_{hy}^2-1}
+b_1\frac{\rho_{hx}^2}{\rho_{hx}^2-1}\Big)(\rho_{hx}\rho_{hy}-\rho_{xy}) ,\\
\frac{2\rho_{hx}\rho_{hy}}{\rho_{hx}\rho_{hy}-\rho_{xy}}&=b_3\frac{\rho_{hy}^2}{\rho_{hy}^2-1}
+b_1\frac{\rho_{hx}^2}{\rho_{hx}^2-1}.
    \end{split}
\end{equation}
Since the right hand side is negative ($\rho_{ij}^2-1<0$), the left hand side must also be negative.

We can solve this equation exactly. For notational simplicity let $a:=\rho_{hy}^2/\big(\rho_{hy}^2-1\big)<0,\, b:=\rho_{hx}^2/\big(\rho_{hx}^2-1\big)<0$ and $c:=2\rho_{hx}\rho_{hy}/\big(\rho_{hx}\rho_{hy}-\rho_{xy}\big)<0$. Then from the above we have that 
\begin{equation}
    \begin{split}
     c&= b_3 a + b_1 b,\\
    \frac{-b_1 b + c}{a}&=  b_3   ,\\
    \frac{-b_1 b + c}{a} &\overset{(a)}{>} 1,\\
    -b_1 b + c &\overset{(b)}{<} a,\\
b_1 &< \frac{-a + c}{b} ,\\
 1\overset{(c)}{<}& \frac{-a + c}{b},
    \end{split}
\end{equation}
where $(a)$ enforces $b_3>1$, i.e., satisfying Eq.~$(vi)$, $(b)$ since $a<0$ the inequality $>$ flips to $<$ and $(c)$ is the tightest way to allow a choice $b_1>1$, i.e., satisfying Eq.~$(iii)$.
Substituting the original definitions for $a,b$ and $c$ back the inequality gives
\begin{equation}\label{eq:confounding_step1}
    \begin{split}
    1&<\frac{-\frac{\rho_{hy}^2}{\rho_{hy}^2-1}+\frac{2\rho_{hy}\rho_{hx}}{\rho_{hy}\rho_{hx}-\rho_{xy}}}{\frac{\rho_{hx}^2}{\rho_{hx}^2-1}},\\
    \frac{\rho_{hx}^2}{\rho_{hx}^2-1}&\overset{(a)}{>}-\frac{\rho_{hy}^2}{\rho_{hy}^2-1}+\frac{2\rho_{hy}\rho_{hx}}{\rho_{hy}\rho_{hx}-\rho_{xy}},\\
    \frac{\rho_{hy}^2}{\rho_{hy}^2-1}+\frac{\rho_{hx}^2}{\rho_{hx}^2-1}&>\frac{2\rho_{hy}\rho_{hx}}{\rho_{hy}\rho_{hx}-\rho_{xy}},\\
    \frac{\rho_{hy}^2}{\rho_{hy}^2-1}+\frac{\rho_{hx}^2}{\rho_{hx}^2-1}&\overset{(b)}{>}\frac{2d\rho_{xy}}{d\rho_{xy}-\rho_{xy}},\\
    \frac{\rho_{hy}^2}{\rho_{hy}^2-1}+\frac{\rho_{hx}^2}{\rho_{hx}^2-1}&>\frac{2d}{d-1},
    \end{split}
\end{equation}
where (a) we use that since $\rho_{hx}^2<1$, we have that the inequality flips, and (b) we use the notation $\rho_{hy}\rho_{hx}=d\rho_{xy}$. Since $\rho_{hy}^2, \rho_{hx}^2 < 1$, the left-hand side of Eq.~\eqref{eq:confounding_step1} is less than zero. Therefore, $2d/(d-1)<0$ meaning that $0<d<1$, i.e., if
\begin{equation}
    \frac{\rho_{hy}\rho_{hx}}{\rho_{xy}}\geq1 \quad \text{or} \quad \frac{\rho_{hy}\rho_{hx}}{\rho_{xy}}\leq0,
\end{equation}
then there is a contradiction. Note that $\rho_{xy}/(\rho_{hy}\rho_{hx})<0$ was already excluded by the requirement that $\operatorname{sign}(\rho_{hy}\rho_{hx})=\operatorname{sign}(\rho_{xy})$. Continuing with the derivation of Eq.~\eqref{eq:confounding_step1},

\begin{equation}\label{eq:confounding_step2}
    \begin{split}
    \frac{\rho_{hy}^2}{\rho_{hy}^2-1}+\frac{\rho_{hx}^2}{\rho_{hx}^2-1}&>\frac{2d}{d-1},\\
    \frac{\rho_{hy}^2(\rho_{hx}^2-1)+\rho_{hx}^2(\rho_{hy}^2-1)}{(\rho_{hy}^2-1)(\rho_{hx}^2-1)}&>\frac{2d}{d-1},\\
    \frac{2\rho_{hy}^2\rho_{hx}^2-\rho_{hx}^2-\rho_{hy}^2}{(\rho_{hy}^2-1)(\rho_{hx}^2-1)}&>\frac{2d}{d-1},\\
    \frac{(2\rho_{hy}^2\rho_{hx}^2-\rho_{hx}^2-\rho_{hy}^2)(d-1)}{2d(\rho_{hy}^2-1)(\rho_{hx}^2-1)}&\overset{(a)}{<}1,\\
    \frac{(2\rho_{hy}^2\rho_{hx}^2-\rho_{hx}^2-\rho_{hy}^2)(\rho_{hy}\rho_{hx}-\rho_{xy})}{2\rho_{hy}\rho_{hx}(\rho_{hy}^2-1)(\rho_{hx}^2-1)}&\overset{(b)}{<}1,
    \end{split}
\end{equation}
where in (a) we use that since $2d/(d-1)<0$, the inequality flips, and in (b) we take the reverse of the final two steps of Eq.~\eqref{eq:confounding_step1}.

Eq. $(v)$ has become a single inequality that enforces Eq. $(iii)$ and $(vi)$ as well. Therefore, satisfying this final inequality is a necessary condition for membership in $\mathcal{M}^0_{G,\alpha}$.

To collect the results, there are three conditions on $\Sigma$ for membership in $\mathcal{M}^0_{G,\alpha}$:
\begin{equation}\label{eq:proof_confounding_conditions}
    \begin{split}
    (c.1)&\qquad \frac{(2\rho_{hy}^2\rho_{hx}^2-\rho_{hy}^2-\rho_{hx}^2)(\rho_{hx}\rho_{hy}-\rho_{xy})}{2\rho_{hx}\rho_{hy}(\rho_{hx}^2-1)(\rho_{hy}^2-1)}<1, \\
    (c.2)&\quad \operatorname{sign}(\sigma_{hx}\sigma_{hy})=\operatorname{sign}(\sigma_{xy}), \\
    (c.3)&\quad \frac{\rho_{hy}\rho_{hx}}{\rho_{xy}}<1.
    \end{split}
\end{equation}

Hence, a covariance matrix $\Sigma$ belongs to $\mathcal{M}^0_{G,\alpha}$ if and only if it satisfies all the conditions in Eq.~\eqref{eq:proof_confounding_conditions}. By the $\mathcal{M}^0$-criterion Theorem~\ref{thm:m0-criterion}, we have that covariance matrices $\Sigma\in \mathcal{M}^p_{G,\alpha}$ violating one of the conditions in Eq.~\eqref{eq:proof_confounding_conditions} are identifiable. 

To summarise what we have shown up to this point, $\mathcal{M}^p_{G,\alpha}=F_G$, and verifying if $\Sigma\in\mathcal{M}^0_{G,\alpha}$ reduces to three conditions. Hence, any $\Sigma\in F_G$ that satisfies the three conditions is in $\mathcal{M}^0_{G,\alpha}$ and is thus non-identifiable. Conversely, any $\Sigma\in F_G$ that violates one of the three conditions is identifiable. We will use this to show that the sets of non-identifiable and identifiable covariance matrices both have non-zero measure. Throughout the rest of the proof, we will use the notation that for a $\rho_{ij}\in(\underline{\rho}_{ij},\overline{\rho}_{ij})$ we have that $\underline{\rho}_{ij}$ is the infimum and $\overline{\rho}_{ij}$ is the supremum.

To begin, let
\begin{equation}
    \Sigma_{id}:=\{\Sigma\in F_G:\,\Sigma \text{ does not satisfy one of the conditions in Eq.}\eqref{eq:proof_confounding_conditions}\}.
\end{equation}
Let $\Sigma_{\mathrm{set}}$ be a set of covariance matrices for which the correlation coefficients are
\begin{equation}
\rho_{hx}\in(0.001,0.002),\quad \rho_{hy}\in(-0.001,0),\quad \rho_{xy}\in(0.001,0.002).
\end{equation}
Then
\begin{equation}
\begin{split}
    1 + 2\rho_{hx}\rho_{hy}\rho_{xy}
    -(\rho_{hx}^2+\rho_{hy}^2+\rho_{xy}^2)
    &\ge 1 + 2\overline{\rho}_{hx}\underline{\rho}_{hy}\overline{\rho}_{xy}
    -(\overline{\rho}_{hx}^2+\underline{\rho}_{hy}^2+\overline{\rho}_{xy}^2),\\
    &= 1 + 2\cdot0.002\cdot (-0.001)\cdot 0.002
    - (0.002^2 + (-0.001)^2 + 0.002^2) ,\\
    &\approx 1.00>0.
\end{split}
\end{equation}
Hence, Sylvester's criterion Eq.~\eqref{eq:sylvester_criterion_3__withtout_latent} holds. Therefore, since $|\rho_{ij}|<1$, we have that $\Sigma\in PD_3$. Moreover, the non-zero correlations $\rho_{ij}$ respect the marginal independences of $G$, hence $\Sigma_{\mathrm{set}}\in F_G$.

Furthermore, since $\operatorname{sign}(\sigma_{ij})=\operatorname{sign}(\rho_{ij})$, we have
\begin{equation}
    \operatorname{sign}(\sigma_{hx}\sigma_{hy})=-\neq+=\operatorname{sign}(\sigma_{xy}),
\end{equation}
so $\Sigma_{\mathrm{set}}$ violates Condition~$(c.2)$ from Eq.~\eqref{eq:proof_confounding_conditions}. Hence, $\Sigma_{\mathrm{set}}\subseteq \Sigma_{id}$.

Finally, the Lebesgue measure satisfies
\begin{equation}
    m(\Sigma_{\mathrm{set}}) = 0.001^3> 0.
\end{equation}
By monotonicity, we conclude
\begin{equation}
    m(\Sigma_{id}) \ge m(\Sigma_{\mathrm{set}}) > 0,
\end{equation}
and hence $\Sigma_{id}$ is not a measure-zero set.

Now let
\begin{equation}
    \Sigma_{non}:=\{\Sigma\in F_G:\,\Sigma \text{ satisfies the conditions in Eq.}\eqref{eq:proof_confounding_conditions}\},
\end{equation}
and let $\Sigma_{\mathrm{set}}$ be a set of covariance matrices with the correlation coefficients
\begin{equation}
\rho_{hx}\in(0.326,0.338),\quad \rho_{hy}\in(0.29,0.305),\quad \rho_{xy}\in(0.734,0.842).
\end{equation}
Then
\begin{equation}
\begin{split}
    1 + 2\rho_{hx}\rho_{hy}\rho_{xy}
    -(\rho_{hx}^2+\rho_{hy}^2+\rho_{xy}^2)
    &\ge 1 + 2\underline{\rho}_{hx}\underline{\rho}_{hy}\underline{\rho}_{xy}
    -(\overline{\rho}_{hx}^2+\overline{\rho}_{hy}^2+\overline{\rho}_{xy}^2),\approx 0.22>0.
\end{split}
\end{equation}
Hence,  Sylvester's criterion Eq.~\eqref{eq:sylvester_criterion_3__withtout_latent} holds. Therefore, since $|\rho_{ij}|<1$, we have that $\Sigma\in PD_3$.  Moreover, the non-zero correlations $\rho_{ij}$ respect the marginal independences of $G$, hence $\Sigma_{\mathrm{set}}\in F_G$.

To satisfy the conditions in Eq.~\eqref{eq:proof_confounding_conditions}, we
require that in  Condition $(c.1)$ the left-hand side is smaller than one on the whole domain, i.e.,
\begin{equation}\label{eq:proof_confounding_reference_latent}
   \begin{split}
        \operatorname{max}\big(\frac{(2\rho_{hy}^2\rho_{hx}^2-\rho_{hy}^2-\rho_{hx}^2)(\rho_{hx}\rho_{hy}-\rho_{xy})}{2\rho_{hx}\rho_{hy}(\rho_{hx}^2-1)(\rho_{hy}^2-1)}\big)&<1,\\
        \operatorname{max}\big(\frac{(2\rho_{hy}^2\rho_{hx}^2-\rho_{hy}^2-\rho_{hx}^2)(d-1)}{2d(\rho_{hx}^2-1)(\rho_{hy}^2-1)}\big)&<1,
   \end{split}
\end{equation}
where we use the same step as in Eq.~\eqref{eq:confounding_step1} with the notation that $\rho_{hy}\rho_{hx}=d\rho_{xy}$ and $0<d<1$. We will show that the numerator and denominator are always positive. Therefore, to find the maximum, we need to get the numerator as large as possible and the denominator as small as possible. 

To begin with the first term in the numerator. Since $ \rho_{hy}^2,\rho_{hx}^2\in(0,1)$, we have that $\rho_{hy}^2-1<0,$\, and \, $\rho_{hx}^2-1<0$. Hence,
\begin{equation}
    2\rho_{hy}^2\rho_{hx}^2-\rho_{hy}^2-\rho_{hx}^2=\rho_{hy}^2\big(\rho_{hx}^2-1\big)+\rho_{hx}^2\big(\rho_{hy}^2-1\big)<0.
\end{equation}
Therefore,
\begin{equation}
    |2\rho_{hy}^2\rho_{hx}^2-\rho_{hy}^2-\rho_{hx}^2|<|2\underline{\rho}_{hy}^2\underline{\rho}_{hx}^2-\overline{\rho}_{hy}^2-\overline{\rho}_{hx}^2|.
\end{equation}

For the second term in the numerator, since $0<d<1$, we have that
\begin{equation}
    d-1<1.
\end{equation}
Therefore, both terms in the numerator are negative, making the whole numerator positive.
In addition, since $\rho_{hy}\rho_{hx}=d\rho_{xy}$, we have that 
\begin{equation}
    |d-1|\leq |\operatorname{min}(d)-1|=|\frac{\underline{\rho}_{hx}\underline{\rho}_{hy}}{\overline{\rho}_{xy}}-1|.
\end{equation}

To continue with the first term in the denominator. We have that
\begin{equation}
    d\geq \operatorname{min}(d)=\frac{\underline{\rho}_{hx}\underline{\rho}_{hy}}{\overline{\rho}_{xy}}\approx0.11>0.
\end{equation}

For the second and third term, since $|\rho_{ij}|<1$, we have that
\begin{equation}
    \rho_{ij}^2-1<0.
\end{equation}
Therefore,
\begin{equation}
    (\rho_{hx}^2-1)(\rho_{hy}^2-1)>0.
\end{equation}
Hence, the denominator is positive. Furthermore, since 
\begin{equation}
    |\rho_{ij}^2-1|\geq |\overline{\rho}_{ij}^2-1|,
\end{equation}
we have that
\begin{equation}
    |(\rho_{hx}^2-1)(\rho_{hy}^2-1)|\geq |(\overline{\rho}_{hx}^2-1)(\overline{\rho}_{hy}^2-1)|.
\end{equation}

Combining it all together, we have
\begin{equation}
   \begin{split}
        \operatorname{max}\big(\frac{(2\rho_{hy}^2\rho_{hx}^2-\rho_{hy}^2-\rho_{hx}^2)(d-1)}{2d(\rho_{hx}^2-1)(\rho_{hy}^2-1)}\big)&<\big(\frac{(2\underline{\rho}_{hy}^2\underline{\rho}_{hx}^2-\overline{\rho}_{hy}^2-\overline{\rho}_{hx}^2)(\operatorname{min}(d)-1)}{2\operatorname{min}(d)(\overline{\rho}_{hx}^2-1)(\overline{\rho}_{hy}^2-1)}\approx0.93<1,
   \end{split}
\end{equation}
hence, satisfying Condition~$(c.1)$.

Finally, we verify the other two conditions in Eq.~\eqref{eq:proof_confounding_conditions}. For Condition~$(c.2)$, we note that all the correlation coefficients of the covariance matrices in $\Sigma_{\mathrm{set}}$ are positive, and hence Condition~$(c.2)$ is satisfied. For Condition~$(c.3)$ we note that,
\begin{equation}
    \frac{\rho_{hy}\rho_{hx}}{\rho_{xy}}\leq\frac{\overline{\rho}_{hy}\overline{\rho}_{hx}}{\underline{\rho}_{xy}}\approx0.14<1.
\end{equation}
Meaning that Condition~$(c.3)$ is also satisfied.

Thus $\Sigma_{\mathrm{set}}$ satisfies the conditions in Eq.~\eqref{eq:proof_confounding_conditions}, and therefore $\Sigma_{\mathrm{set}}\subseteq \Sigma_{non}$.

Finally, the Lebesgue measure satisfies
\begin{equation}
    m(\Sigma_{\mathrm{set}}) \approx 0.01\cdot 0.02\cdot 0.11 > 0.
\end{equation}
By monotonicity, we conclude
\begin{equation}
    m(\Sigma_{non}) \ge m(\Sigma_{\mathrm{set}}) > 0,
\end{equation}
and hence $\Sigma_{non}$ is not a measure-zero set.

Therefore, both the identifiable and non-identifiable covariance matrices form subsets of $\mathcal{M}^p_{G,\alpha}$ with positive Lebesgue measure. Hence, the edge $\alpha$ in graph $G$ is partially identifiable with positive measure.

\end{proof}

\subsubsection{Cycle of Length 3}\label{sec:no_laten}
\begin{proof}
     We adopt the assumptions and conventions stated at the start of this section. Let $G=(V,E)$ be the graph of Fig.~\ref{fig:three_loop}. The nodes $V=\{H,X,Y\}$ correspond to the SDE process $X=(H,X,Y)^T$, then the Hurwitz stable drift matrix $A$ respecting the causal structure of graph $G$ is
\begin{equation}
    A=\left[\begin{matrix}s_{x} & 0 & \gamma,\\\beta & s_{h} & 0,\\0 & \alpha & s_{y}\end{matrix}\right],
\end{equation}
the diagonal diffusion matrix is 
\begin{equation}
    D=\left[\begin{matrix}d_h & 0 & 0,\\0 & d_{x} & 0,\\0 & 0 & d_{y}\end{matrix}\right]\in PDD_3,
\end{equation}
and the $t$-faithful covariance matrix is 
\begin{equation}
    \Sigma=\left[\begin{matrix}\sigma_{hh} & \sigma_{hx} & \sigma_{hy},\\\sigma_{hx} & \sigma_{xx} & \sigma_{xy},\\\sigma_{hy} & \sigma_{xy} & \sigma_{yy}\end{matrix}\right]\in\mathcal{M}^p_{G,\alpha}.
\end{equation}

In the numerical Section~\ref{sec:numerical_edge} we find examples of $\Sigma,\Sigma'\in\mathcal{M}^p_{G,\alpha}$ where $\Sigma$ is identifiable, and $\Sigma'$ is non-identifiable. Therefore we show there exist covariance matrices in both $\mathcal{M}^\pm_{G,\alpha}$ and only in either $\mathcal{M^+_{G,\alpha}}$ or $\mathcal{M^+_{G,\alpha}}$ such that the edge $\alpha$ for graph $G$ is partially-identifiable.

For completeness, we want to show when $\Sigma\in\mathcal{M}^p_{G,\alpha}$ is (non-)identifiable.
The resulting set of equations to solve is
\begin{equation}     \begin{split}
(i)\;& - d_{x} = 2 \gamma \sigma_{xy} + 2 s_{x} \sigma_{xx}  ,\\
(ii)\;& 0 = \beta \sigma_{xx} + \gamma \sigma_{hy} + s_{h} \sigma_{xh} + s_{x} \sigma_{xh}  ,\\
(iii)\;& - d_{h} = 2 \beta \sigma_{xh} + 2 s_{h} \sigma_{hh} ,\\
(iv)\;& 0 = \alpha \sigma_{xh} + \gamma \sigma_{yy} + s_{y} \sigma_{xy} + s_{x} \sigma_{xy}  ,\\
(v)\;& 0 = \alpha \sigma_{hh} + \beta \sigma_{xy} + s_{h} \sigma_{hy} + s_{y} \sigma_{hy}  ,\\
(vi)\;& - d_{y} = 2 \alpha \sigma_{hy} + 2 s_{y} \sigma_{yy}.
\end{split} \end{equation}

We note that since the drift matrix $A$ is not triangular, the self loops $s_x,s_y$ and $s_h$ are unconstrained. 

In order to characterize models in $\mathcal{M}^0_{G,\alpha}$, we set $\alpha=0$, which simplifies Eq.~$(iv),(v)$ and $(vi)$ to
\begin{equation}     \begin{split}
(iv)\;& 0 = \gamma \sigma_{yy} + s_{y} \sigma_{xy} + s_{x} \sigma_{xy}  ,\\
(v)\;& 0 = \beta \sigma_{xy} + s_{h} \sigma_{hy} + s_{y} \sigma_{hy}  ,\\
(vi)\;& - d_{y} = 2 s_{y} \sigma_{yy}.
\end{split} \end{equation}

Due to $d_y,\sigma_{yy}>0$, Eq.~$(vi)$ is satisfied if and only if $s_y<0$. Moreover, Eq.~$(v)$ is satisfied if and only if 
\begin{equation}     \begin{split}
    \beta& = -\frac{(s_h+s_y)\sigma_{hy}}{\sigma_{xy}},\\ &\overset{(a)}{=} -(s_h+s_y)\frac{\rho_{hy}}{\rho_{xy}}\sqrt{\frac{\sigma_{hh}\sigma_{yy}}{\sigma_{xx}\sigma_{yy}}},\\&=-(s_h+s_y)\frac{\rho_{hy}}{\rho_{xy}}\sqrt{\frac{\sigma_{hh}}{\sigma_{xx}}},
\end{split} \end{equation}
and Eq.~$(iv)$ is satisfied if and only if
\begin{equation}     \begin{split}
    \gamma&= -\frac{(s_y+s_x)\sigma_{xy}}{\sigma_{yy}},\\&\overset{(a)}{=} -(s_y+s_x)\rho_{xy}\frac{\sqrt{\sigma_{xx}\sigma_{yy}}}{\sigma_{yy}},\\&=-(s_y+s_x)\rho_{xy}\sqrt{\frac{\sigma_{xx}}{\sigma_{yy}}},
\end{split} \end{equation}
where $(a)$, in both, $\sigma_{ij}=\rho_{ij}\sqrt{\sigma_{ii}\sigma_{jj}}$.

Since $d_x>0$ and can be chosen arbitrarily, Eq.~$(i)$ is satisfied if and only if 
\begin{equation}\label{eq:proof_three_loop_i}     \begin{split}
    0&>\gamma\sigma_{xy} + s_x\sigma_{xx} ,\\
    0&\overset{(a)}{>}-(s_y+s_x)\rho_{xy}\sqrt{\frac{\sigma_{xx}}{\sigma_{yy}}}\rho_{xy}\sqrt{\sigma_{xx}\sigma_{yy}} + s_x\sigma_{xx} ,\\
     0&>-(s_y+s_x)\rho_{xy}^2\sigma_{xx}+ s_x\sigma_{xx} ,\\
     0&>-(s_y+s_x)\rho_{xy}^2+ s_x ,\\
     \rho_{xy}^2s_y&>\big(1-\rho_{xy}^2\big) s_x ,\\
    \frac{\rho_{xy}^2}{1-\rho_{xy}^2}s_y&\overset{(b)}{>}s_x,
\end{split} \end{equation}
where $(a)$ $\rho_{ij}\sqrt{\sigma_{ii}\sigma_{jj}}$ and we substitute $\gamma$ and $(b)$ $\rho_{xy}^2<1$ such that $1-\rho^2_{xy}>0$ and the division doesn't flip the inequality. In addition, since  $1-\rho_{xy}^2>0$ and $s_y<0$, $\rho_{xy}^2/\big(1-\rho_{xy}^2\big)s_y<0$. Therefore $s_x<0$. Hence, let 
\begin{equation}
    s_x=b_1 \frac{\rho_{xy}^2}{1-\rho_{xy}^2}s_y,
\end{equation}
with $b_1>1$.

Since $d_h>0$ and can be chosen arbitrarily, Eq.~$(iii)$ is satisfied if and only if
\begin{equation}     \begin{split}
    0&>\beta \sigma_{xh} + s_{h} \sigma_{hh} ,\\
    0&\overset{(a)}{>}-(s_h+s_y)\frac{\rho_{hy}}{\rho_{xy}}\sqrt{\frac{\sigma_{hh}}{\sigma_{xx}}}\rho_{hx}\sqrt{\sigma_{xx}\sigma_{hh}} + s_{h} \sigma_{hh} ,\\
    0&>-(s_h+s_y)\frac{\rho_{hy}\rho_{hx}}{\rho_{xy}}\sigma_{hh}+ s_{h} \sigma_{hh} ,\\
     0&>-(s_h+s_y)\frac{\rho_{hy}\rho_{hx}}{\rho_{xy}}+ s_{h} ,\\
     \frac{\rho_{hy}\rho_{hx}}{\rho_{xy}}s_y&>\big(1-\frac{\rho_{hy}\rho_{hx}}{\rho_{xy}}\big) s_{h},
\end{split} \end{equation}
where $(a)$ $\rho_{ij}\sqrt{\sigma_{ii}\sigma_{jj}}$ and we substitute $\beta$. 

Note that $\rho_{hx}\rho_{hy}/\rho_{xy}$  is unconstrained except for the requirement that $\Sigma \in F_g$, which implies $\sigma_{ij} \neq 0$ and hence $\rho_{hx}\rho_{hy}/\rho_{xy} \neq 0$. Let $d:=\rho_{hx}\rho_{hy}/\rho_{xy}$. We have four scenarios:
\begin{enumerate}
    \item If $d<0$, then \begin{equation}     \begin{split}
        s_h&\overset{(a)}{<}\frac{d}{1-d}s_y ,\\
        s_h&\overset{(b)}{=}b_2\frac{d}{1-d}s_y \qquad \text{with }\, b_2<1
    \end{split} \end{equation}
    \item if $0<d<1$, then \begin{equation}     \begin{split}
        s_h&\overset{(a)}{<}\frac{d}{1-d}s_y ,\\
        s_h&\overset{(c)}{=}b_2\frac{d}{1-d}s_y\qquad \text{with }\, b_2>1
    \end{split} \end{equation}
    \item if $d=1$, then \begin{equation}
        0\cdot s_h=0<s_y\overset{(d)}{<}0,
    \end{equation} 
    which is a contradiction.
    \item if $d>1$, then \begin{equation}     \begin{split}
        s_h&\overset{(e)}{>}\frac{d}{1-d}s_y ,\\
        s_h&\overset{(f)}{=}b_2\frac{d}{1-d}s_y \qquad \text{with }\, b_2>1,
    \end{split} \end{equation}
\end{enumerate}
where $(a)$ due to $d<1$, we have that $1-d>0$ and the inequality isn't flipped, $(b)$ due to $d\leq0$, \,$1-d>0$ and $s_y<0$, we have that \,$d/\big(1-d\big)s_y>0$ such that $s_h$ being smaller than $d\big(1-d\big)s_y$ requires $b_2<1$, \,$(c)$ due to $d>0$,\, $1-d>0$ and $s_y<0$, we have that \,$d/\big(1-d\big)s_y<0$  such that $s_h$ being smaller than $d\big(1-d\big)s_y$ requires $b_2>1$, \,$(d)$ we use $s_y<0$,\, $(e)$ due to $d>1$, we have that\, $1-d<0$ such that the inequality is flipped and $(f)$ due to $d>0$, \,$1-d<0$ and $s_y<0$, we have that\, $d/\big(1-d\big)s_y>0$ such that $s_h$ being bigger than $d/\big(1-d\big)s_y$ requires $b_2>1$. Summarised this gives,
\begin{equation}
    s_h=b_2\frac{d}{1-d}s_y, \qquad \text{with}\, \begin{cases}
b_2<1 & \text{if } d\leq 0,\\
b_2>1 & \text{if } d>0.
\end{cases}
\end{equation}
In addition we can write $s_h$ as
\begin{equation}
    \begin{split}
        s_h&=b_2\frac{d}{1-d}s_y,\\
        &\overset{(a)}{=}b_2\frac{\rho_{hx}\rho_{hy}/\rho_{xy}}{1-\rho_{hx}\rho_{hy}/\rho_{xy}}s_y,\\
        &=b_2\frac{\rho_{hx}\rho_{hy}}{\rho_{xy}-\rho_{hx}\rho_{hy}}s_y,
    \end{split}
\end{equation}
where $(a)$ substitute $d=\rho_{hx}\rho_{hy}/\rho_{xy}$.

Using all of the above in Eq. $(ii)$, we get
\begin{equation}     \begin{split}
    0&=\beta\sigma_{xx} + \gamma\sigma_{hy} + (s_h+s_x)\sigma_{xh} ,\\
    0&\overset{(a)}{=}-(s_h+s_y)\frac{\rho_{hy}}{\rho_{xy}}\sqrt{\frac{\sigma_{hh}}{\sigma_{xx}}}\sigma_{xx}-(s_y+s_x)\rho_{xy}\sqrt{\frac{\sigma_{xx}}{\sigma_{yy}}}\rho_{hy}\sqrt{\sigma_{hh}\sigma_{yy}} + (s_h+s_x)\rho_{xh}\sqrt{\sigma_{xx}\sigma_{hh}} ,\\
    0&=-(s_h+s_y)\frac{\rho_{hy}}{\rho_{xy}}\sqrt{\sigma_{hh}\sigma_{xx}} -(s_y+s_x)\rho_{xy}\rho_{xy}\sqrt{\sigma_{xx}\sigma_{hh}} + (s_h+s_x)\rho_{xh}\sqrt{\sigma_{xx}\sigma_{hh}} ,\\
    0&=-(s_h+s_y)\frac{\rho_{hy}}{\rho_{xy}} -(s_y+s_x)\rho_{xy}\rho_{xy} + (s_h+s_x)\rho_{xh} ,\\
     0&\overset{(b)}{=}-(b_2\frac{\rho_{hx}\rho_{hy}}{\rho_{xy}-\rho_{hx}\rho_{hy}}s_y+s_y)\frac{\rho_{hy}}{\rho_{xy}} -(s_y+b_1 \frac{\rho_{xy}^2}{1-\rho_{xy}^2}s_y)\rho_{xy}\rho_{xy} + (b_2\frac{\rho_{hx}\rho_{hy}}{\rho_{xy}-\rho_{hx}\rho_{hy}}s_y+b_1 \frac{\rho_{xy}^2}{1-\rho_{xy}^2}s_y)\rho_{xh} ,\\
     0&=\left[b_1\frac{\rho_{xy}^2}{1-\rho_{xy}^2}\left(\rho_{hx}-\rho_{hy}\rho_{xy}\right)+b_2\frac{\rho_{xy}\rho_{hx}}{\rho_{xy}-\rho_{hy}\rho_{hx}}\left(\rho_{hx}-\frac{\rho_{hy}}{\rho_{xy}}\right) -\left(\frac{\rho_{hy}}{\rho_{xy}}+\rho_{xy}\rho_{hy}\right)\right]s_y ,\\
    0&=b_1\frac{\rho_{xy}^2}{1-\rho_{xy}^2}\left(\rho_{hx}-\rho_{hy}\rho_{xy}\right)+b_2\frac{\rho_{xy}\rho_{hx}}{\rho_{xy}-\rho_{hy}\rho_{hx}}\left(\rho_{hx}-\frac{\rho_{hy}}{\rho_{xy}}\right) -\left(\frac{\rho_{hy}}{\rho_{xy}}+\rho_{xy}\rho_{hy}\right) ,\\
    0&\overset{(c)}{=} b_1a + b_2 b -c,
\end{split} \end{equation}
where $(a)$ substitute the expression for $\beta$, $\gamma$ and $\sigma_{ij}=\rho_{ij}\sqrt{\sigma_{ii}\sigma_{jj}}$, $(b)$ substitute $s_x$ and $s_h$ and $(c)$ define $a:=\frac{\rho_{xy}^2}{1-\rho_{xy}^2}\left(\rho_{hx}-\rho_{hy}\rho_{xy}\right)$, $b:=\frac{\rho_{xy}\rho_{hx}}{\rho_{xy}-\rho_{hy}\rho_{hx}}\left(\rho_{hx}-\frac{\rho_{hy}}{\rho_{xy}}\right)$ and $c:=\frac{\rho_{hy}}{\rho_{xy}}+\rho_{xy}\rho_{hy}$. 

Note that $a,b$ and $c$ are unconstrained while $b_2$ depends on the sign of $d=\rho_{hx}\rho_{hy}/\rho_{xy}$. Since the sign of $c$ doesn't matter, this gives us in total 8 outcomes to check. We provide the derivation of two possible outcomes. The other outcomes follow analogously. Let $a,b<0$, then
\begin{equation}     \begin{split}
    0&= b_1a + b_2 b -c,\\
    -b_1a&=b_2 b -c,\\
    -a&\overset{(a)}{<}b_2 b -c,\\
    -a+c&<b_2 b,\\
    \frac{-a+c}{b}&\overset{(b)}{>}b_2,
\end{split} \end{equation}
where $(a)$ $b_1>1$ and $a<0$ such that $-b_1a>-a$ and $(b)$ $b<0$ such that the inequality flips.
If $d<0$, then $b_2<1$, such that $b_2$ is unconstrained from below and is always possible. If $d>0$ we have that $b_2>1$, such that $(-a+c)/b>b_2>1$ if and only if $(-a+c)/b>1$. Therefore if $a,b<0$ and $d>0$ we have a contradiction when $(-a+c)/b\leq1$.

Listing all the conditions that lead to a contradiction, we obtain:
\begin{equation}\label{eq:proof_three_loop_conditions}
    \begin{split}
        (c.1)&\quad \text{If $d>0,\,a<0$ and $b<0$, and $(-a+c)/b\leq1$}  ,\\
        (c.2)&\quad  \text{If $d>0,\,a>0$ and $b>0$, and $(-a+c)/b\leq1$} ,\\
        (c.3)&\quad  \text{If $d<0,\,a<0$ and $b>0$, and $(-a+c)/b\geq1$} ,\\
        (c.4)&\quad \text{If $d<0,\,a>0$ and $b<0$, and $(-a+c)/b\geq1$},\\
        (c.5)&\quad d=1.
    \end{split}
\end{equation}
Hence, a covariance matrix $\Sigma$ does not belongs to $\mathcal{M}^0_{G,\alpha}$ if and only if it satisfies one of the conditions in Eq.~\eqref{eq:proof_three_loop_conditions}. By the $\mathcal{M}^0$-criterion Theorem~\ref{thm:m0-criterion}, we have that $\Sigma\in \mathcal{M}^p_{G,\alpha}$ satisfying one of the conditions in Eq.~\eqref{eq:proof_three_loop_conditions} are identifiable. 

\end{proof}
\subsubsection{Instrumental Variable (IV)}\label{sec:proof_no_latent_cyclic}
\begin{proof}
     We adopt the assumptions and conventions stated at the start of this section. Let $G=(V,E)$ be the graph of Fig.~\ref{fig:iv}. The nodes $V=\{H,X,Y\}$ correspond to the SDE process $X=(H,X,Y)^T$, then the Hurwitz stable drift matrix $A$ respecting the causal structure of graph $G$ is
\begin{equation}
    A=\left[\begin{matrix}s_{z} & 0 & 0 & 0,\\0 & s_{h} & 0 & 0,\\\beta & \gamma & s_{x} & 0,\\0 & \delta & \alpha & s_{y}\end{matrix}\right],
\end{equation}

the diagonal drift matrix is
\begin{equation}
    D=\left[\begin{matrix}d_{z} & 0 & 0 & 0,\\0 & d_{h} & 0 & 0,\\0& 0 & d_{x} & 0,\\0 & 0 & 0 & d_{y}\end{matrix}\right]\in PDD_4,
\end{equation}

 and the $t$-faithful covariance matrix is 
\begin{equation}
    \Sigma=\left[\begin{matrix}\sigma_{zz} & 0 & \sigma_{zx} & \sigma_{zy},\\0& \sigma_{hh} & \sigma_{hx} & \sigma_{hy},\\\sigma_{zx} & \sigma_{hx} & \sigma_{xx} & \sigma_{xy},\\\sigma_{zy} & \sigma_{hy} & \sigma_{xy} & \sigma_{yy}\end{matrix}\right] \in\mathcal{M}^p_{G,\alpha}.
\end{equation}

The resulting set of equations to solve is
\begin{equation}     \begin{split}
   (i);& - d_{z} = 2 s_{z} \sigma_{zz} ,\\
    (ii);& - d_{h} = 2 s_{h} \sigma_{hh} ,\\
    (iii);& 0 = \beta \sigma_{zz} + s_{x} \sigma_{zx} + s_{z} \sigma_{zx} ,\\
    (iv);& 0 = \gamma \sigma_{hh} + s_{x} \sigma_{hx} + s_{h} \sigma_{hx} ,\\
    (v);& - d_{x} = 2 \beta \sigma_{zx} + 2 \gamma \sigma_{hx} + 2 s_{x} \sigma_{xx} ,\\
    (vi);& 0 = \alpha \sigma_{zx} + s_{y} \sigma_{zy} + s_{z} \sigma_{zy} ,\\
    (vii);& 0 = \alpha \sigma_{hx} + \delta \sigma_{hh} + s_{y} \sigma_{hy} + s_{h} \sigma_{hy} ,\\
    (viii);&0 = \alpha \sigma_{xx} + \beta \sigma_{zy} + \delta \sigma_{hx} + \gamma \sigma_{hy} + s_{x} \sigma_{xy} + s_{y} \sigma_{xy}  ,\\
    (ix);&- d_{y} = 2 \alpha \sigma_{xy} + 2 \delta \sigma_{hy} + 2 s_{y} \sigma_{yy}.
\end{split} \end{equation}

Analogous to the proof in~\ref{sec:proof_no_laten_cause_n_effect} we see that  Eq.~$(vi)$ is satisfied if and only if  $\operatorname{sign}(\alpha)=\operatorname{sign}(\sigma_{zy})/\operatorname{sign}(\sigma_{zx})$. Since $\Sigma\in \mathcal{M}^p_{G,\alpha}$,  we have $\sigma_{zy}\neq0$ and $\sigma_{zx}\neq0$  meaning that the sign of $\alpha$ is $+$ or $-$. Therefore there exists no $\Sigma\in\mathcal{M}^0_{G,\alpha}$, such that by virtue of the $\mathcal{M}^0$ criterion Theorem~\ref{thm:m0-criterion}, for any $\Sigma\in \mathcal{M}^p_{G,\alpha}$, 
the sign of edge $\alpha$ in graph $G$ is identifiable.
\end{proof}

\subsubsection{Cycle with IV}\label{sec:proof_no_latent_cyclic_iv}
\begin{proof}
     We adopt the assumptions and conventions stated at the start of this section. Let $G=(V,E)$ be the graph of Fig.~\ref{fig:iv_cyclic}. The nodes $V=\{H,X,Y\}$ correspond to the SDE process $X=(H,X,Y)^T$, then the Hurwitz stable drift matrix $A$ respecting the causal structure of graph $G$ is
\begin{equation}
    A=\left[\begin{matrix}s_{z} & 0 & 0 & 0,\\0 & s_{h} & 0 & \delta,\\\beta & \gamma & s_{x} & 0,\\0 & 0 & \alpha & s_{y}\end{matrix}\right],
\end{equation}
the diagonal drift matrix is
\begin{equation}
    D=\left[\begin{matrix}d_{z} & 0 & 0 & 0,\\0 & d_{h} & 0 & 0,\\0& 0 & d_{x} & 0,\\0 & 0 & 0 & d_{y}\end{matrix}\right]\in PDD_4,
\end{equation}
and the $t$-faithful covariance matrix is 
\begin{equation}
    \Sigma=\left[\begin{matrix}\sigma_{zz} & \sigma_{hz} & \sigma_{zx} & \sigma_{zy},\\\sigma_{hz} & \sigma_{hh} & \sigma_{hx} & \sigma_{hy},\\\sigma_{zx} & \sigma_{hx} & \sigma_{xx} & \sigma_{xy},\\\sigma_{zy} & \sigma_{hy} & \sigma_{xy} & \sigma_{yy}\end{matrix}\right]\in\mathcal{M}^p_{G,\alpha}.
\end{equation}

The resulting set of equations to solve is
\begin{equation}     \begin{split}
   (i)&; - d_{z} = 2 s_{z} \sigma_{zz} ,\\
    (ii)&; 0 = \delta \sigma_{zy} + s_{h} \sigma_{hz} + s_{z} \sigma_{hz} ,\\
    (iii)&; - d_{h} = 2 \delta \sigma_{hy} + 2 s_{h} \sigma_{hh} ,\\
    (iv)&;0 = \beta \sigma_{zz} + \gamma \sigma_{hz} + s_{x} \sigma_{zx} + s_{z} \sigma_{zx} ,\\
    (v)&;0 = \beta \sigma_{hz} + \delta \sigma_{xy} + \gamma \sigma_{hh} + s_{h} \sigma_{hx} + s_{x} \sigma_{hx} ,\\
    (vi)&;- d_{x} = 2 \beta \sigma_{zx} + 2 \gamma \sigma_{hx} + 2 s_{x} \sigma_{xx} ,\\
    (vii)&; 0 = \alpha \sigma_{zx} + s_{y} \sigma_{zy} + s_{z} \sigma_{zy}  ,\\
    (viii)&;0 = \alpha \sigma_{hx} + \delta \sigma_{yy} + s_{h} \sigma_{hy} + s_{y} \sigma_{hy} ,\\
    (ix)&;0 = \alpha \sigma_{xx} + \beta \sigma_{zy} + \gamma \sigma_{hy} + s_{x} \sigma_{xy} + s_{y} \sigma_{xy} ,\\
    (x)&;- d_{y} = 2 \alpha \sigma_{xy} + 2 s_{y} \sigma_{yy}.
\end{split} \end{equation}

We note that since the drift matrix $A$ is not triangular, the self loops $s_x,s_y$ and $s_h$ are not constrained from the outset of the proof. 

Due to $d_z,\sigma_{zz}>0$, Eq.~$(i)$ is satisfied if and only if $s_z<0$. In addition, Eq.~$(vii)$ is satisfied if and only if
\begin{equation}
    \alpha=-\big(s_y+s_z\big)\frac{\sigma_{zy}}{\sigma_{zx}}.
\end{equation}

Since $d_y>0$, Eq.~$(x)$ is satisfied if and only if
\begin{equation}
    \begin{split}
        0&>2\alpha\sigma_{xy}+2s_y\sigma_{yy},\\
        0&>\alpha\sigma_{xy}+s_y\sigma_{yy},\\
        0&\overset{(a)}{>}-\big(s_y+s_z\big)\frac{\sigma_{zy}}{\sigma_{zx}}\sigma_{xy}+s_y\sigma_{yy},\\
         0&\overset{(b)}{>}-\big(s_y+s_z\big)\frac{\rho_{zy}\sqrt{\sigma_{zz}\sigma_{yy}}\rho_{xy}\sqrt{\sigma_{xx}\sigma_{yy}}}{\rho_{zx}\sqrt{\sigma_{zz}\sigma_{x}}}+s_y\sigma_{yy},\\
         0&>-\big(s_y+s_z\big)\frac{\rho_{zy}\rho_{xy}}{\rho_{zx}}\sigma_{yy}+s_y\sigma_{yy},\\
         0&>-\big(s_y+s_z\big)\frac{\rho_{zy}\rho_{xy}}{\rho_{zx}}+s_y,\\
         \frac{\rho_{zy}\rho_{xy}}{\rho_{zx}}s_z&>\big(1-\frac{\rho_{zy}\rho_{xy}}{\rho_{zx}}\big)s_y.
    \end{split}
\end{equation}
where $(a)$ we substituted $\alpha=\big(s_y+s_z\big)\sigma_{zy}/\sigma_{zx}$ and $(b)$ we substituted $\sigma_{ij}=\rho_{ij}\sqrt{\Sigma_{ii}\Sigma_{jj}}$. Let $d=\rho_{zy}\rho_{xy}/\rho_{zx}$ for a simpler notation. Then the inequality can be written as
\begin{equation}
    ds_z>\big(1-d\big)s_y.
\end{equation}

We have five scenarios
\begin{enumerate}
    \item If $d<0$, then 
    \begin{equation}
        s_z\frac{d}{(1-d)}>s_y.
    \end{equation}
    Since $d<0$,\,  $d/(1-d)<0$. Therefore $s_y<a$, where $a>0$, such that $y$ can be both positive and negative. Since $d<0$, \, $|1-d|>|d|$ and thus $|d/(1-d)|<1$. Therefore, if $s_y>0$,
    \begin{equation}
        \begin{split}
            |s_y|&<|s_z\frac{d}{(1-d)}|,\\
            &=|s_z|\cdot|\frac{d}{(1-d)}|,\\
             &<|s_z|.
        \end{split}
    \end{equation}
    In addition, $s_z<0$, therefore $\operatorname{sign}(s_z+s_y)=\operatorname{sign}(s_z)=-$. If $s_y<0$, then $\operatorname{sign}(s_z+s_y)=-$
    \item If $d=0$, then
    \begin{equation}
        0>s_y.
    \end{equation}
    In addition, $s_z<0$, therefore $\operatorname{sign}(s_z+s_y)=-$.
    \item  If $0<d<1$, then
    \begin{equation}
        s_z\frac{d}{(1-d)}>s_y.
    \end{equation}
    Since $0<d<1$,\, $d/(1-d)>0$. Moreover, $s_z<0$, therefore $s_zd/(1-d)<0$ and thus $s_y<0$. Hence, $\operatorname{sign}(s_z+s_y)=-$.
    \item If $d=1$, then
    \begin{equation}
        s_z>0.
    \end{equation}
    Since $s_z<0$, this is a contradiction.
    \item  If $d>1$, then
    \begin{equation}
        s_z\frac{d}{(1-d)}<s_y,
    \end{equation}
    since $1-d<0$ the sign of the inequality has flipped. In addition, since $d/(1-d)<0$ and $s_z<0$, \, $s_z\frac{d}{(1-d)}>0$ and therefore $s_y>0$. Furthermore, since $d>0$, \, $|d|>|1-d|$ and thus $|d/(1-d)|>1$. Hence, 
    \begin{equation}
        \begin{split}
            |s_y|&>|s_z\frac{d}{(1-d)}|,\\
        &=|s_z|\cdot|\frac{d}{(1-d)}|,\\
        &>|s_z|.
        \end{split}
    \end{equation}
    Therefore, $\operatorname{sign}(s_z+s_y)=\operatorname{sign}(s_y)=+$.
\end{enumerate}

To summarise the result, 
\begin{equation}
    \operatorname{sign}(s_z+s_y)=\begin{cases}
        -\quad \text{, if } d<1,\\
        +\quad \text{, if } d>1,
    \end{cases}
\end{equation}
and if $d=1$ there is no valid solution for the set of equations and hence $\Sigma\not\in \mathcal{M}^p_{G,\alpha}$. Therefore the sign of $\alpha=-\big(s_y+s_z\big)\sigma_{zy}/\sigma_{zx}$ is
\begin{equation}
    \operatorname{sign}(\alpha)=\begin{cases}
        \operatorname{sign}(\sigma_{zy}/\sigma_{zx})\qquad \text{, if }\, \rho_{zy}\rho_{xy}/\rho_{zx}<1,\\
        -\operatorname{sign}(\sigma_{zy}/\sigma_{zx})\quad \text{, if }\, \rho_{zy}\rho_{xy}/\rho_{zx}>1.
    \end{cases}
\end{equation}

Since $\Sigma\in \mathcal{M}^p_{G,\alpha}$, we have $\sigma_{zy}\neq0$ meaning that the sign of $\alpha$ is $+$ or $-$. Therefore there exists no $\Sigma\in\mathcal{M}^0_{G,\alpha}$, such that by virtue of the $\mathcal{M}^0$ criterion Theorem~\ref{thm:m0-criterion}, for any $\Sigma\in \mathcal{M}^p_{G,\alpha}$, 
the sign of edge $\alpha$ in graph $G$ is identifiable.

\end{proof}

\subsection{Theorem~\ref{thm:latent_id}}\label{sec:proof_latent_id}
We use results and steps from the proofs for that same structures without latent variables detailed in~\ref{sec:proof_no_latent}. The only difference in the setup will be that the variable $H$ is hidden now. This means that the covariance matrix values that depend on $H$, i.e., $\Sigma_{h.}$, will be unknown. Therefore these variables are treated as unknown variables and can be chosen within the bounds of $\Sigma\in F_G$.

\subsubsection{Cause and Effect}\label{sec:appendix_latent_cause_n_effect}
\begin{proof}
We adopt the assumptions and conventions stated at the start of this section. Let $G=(V,E)$ be the graph of Fig.~\ref{fig:hy}, where node $H$ is hidden. By Lemma~\ref{lem:no_latent_id_values}, $\operatorname{sign}(\alpha) = \operatorname{sign}(\sigma_{hy})$. Since we can choose $\sigma_{hy}$, the sign of $\alpha$ can always be chosen both positive and negative. Thus for any $\Sigma\in\mathcal{M}^p_{G,\alpha}$, we have $\Sigma\in\mathcal{M}^+_{G,\alpha}$ and $\Sigma\in\mathcal{M}^-_{G,\alpha}$. Therefore, $\mathcal{M}^+_{G,\alpha}=\mathcal{M}^-_{G,\alpha}$, using Definition~\ref{def:edge_identifiability}, $\alpha$ is non-identifiable in graph $G$
\end{proof}

\subsubsection{Confounding}\label{app:proof_latent_confounding}
\begin{proof}
     We adopt the assumptions and conventions stated at the start of this section. Let $G=(V,E)$ be the graph of Fig.~\ref{fig:confounding}, where node $H$ is hidden. By Lemma~\ref{lem:no_latent_part_id_values}, any covariance matrix $\Sigma$ satisfying one of the the following conditions renders $\alpha$ identifiable:
    \begin{equation}
            \begin{split}
            (c.1)&\quad  \frac{(2\rho_{hy}^2\rho_{hx}^2-\rho_{hy}^2-\rho_{hx}^2)(\rho_{hx}\rho_{hy}-\rho_{xy})}{2\rho_{hx}\rho_{hy}(\rho_{hx}^2-1)(\rho_{hy}^2-1)}\geq1,\\
            (c.2)&\quad\,\,\operatorname{sign}(\sigma_{hx}\sigma_{hy})\neq\operatorname{sign}(\sigma_{xy}),
            \\
    (c.3)&\quad \frac{\rho_{hy}\rho_{hx}}{\rho_{xy}}\geq1.
                    \end{split}
    \end{equation}

    Hence, a covariance matrix $\Sigma$ satisfying all the following conditions renders $\alpha$ non-identifiable:
    \begin{equation}\label{eq:proof_latent_confounding_conditions}
    \begin{split}
    (c'.1)&\qquad \frac{(2\rho_{hy}^2\rho_{hx}^2-\rho_{hy}^2-\rho_{hx}^2)(\rho_{hx}\rho_{hy}-\rho_{xy})}{2\rho_{hx}\rho_{hy}(\rho_{hx}^2-1)(\rho_{hy}^2-1)}<1, \\
    (c'.2)&\quad \operatorname{sign}(\sigma_{hx}\sigma_{hy})=\operatorname{sign}(\sigma_{xy}), \\
    (c'.3)&\quad \frac{\rho_{hy}\rho_{hx}}{\rho_{xy}}<1.
    \end{split}
\end{equation}

We will try to construct such a covariance matrix $\Sigma$ satisfying the conditions in Eq.~\eqref{eq:proof_latent_confounding_conditions} by choosing the values of the hidden part of the covariance matrix, i.e., in
\begin{equation}
    \Sigma=\left[\begin{matrix}\sigma_{hh} & \sigma_{hx} & \sigma_{hy},\\ \sigma_{hx} & \sigma_{xx} & \sigma_{xy},\\\sigma_{hy} & \sigma_{xy} & \sigma_{yy}\end{matrix}\right]\in\mathcal{M}^p_{G,\alpha}
\end{equation}
we set the values for
\begin{equation}
    \left[\begin{matrix}\sigma_{hh} & \sigma_{hx} & \sigma_{hy},\\\sigma_{hx} & . & .,\\\sigma_{hy} & . & .\end{matrix}\right]\in\mathcal{M}^p_{G,\alpha}.
\end{equation}

We set $\rho_{hx}=a|\rho_{xy}|$ and $\rho_{hy}=a\rho_{xy}$ with $0<a<1$.

To begin with Condition~$(c'.1)$, this becomes
\begin{equation}\label{eq:proof_latent_confouding_step1}
    \begin{split}
         \frac{(2\rho_{hy}^2\rho_{hx}^2-\rho_{hy}^2-\rho_{hx}^2)(\rho_{hx}\rho_{hy}-\rho_{xy})}{2\rho_{hx}\rho_{hy}(\rho_{hx}^2-1)(\rho_{hy}^2-1)}&<1,\\
         \frac{(2a^4\rho_{xy}^4-a^2\rho_{xy}^2-a^2\rho_{xy}^2)(a^2|\rho_{xy}|-1)}
        {2a^2|\rho_{xy}|(a^2\rho_{xy}^2-1)(a^2\rho_{xy}^2-1)}&<1,\\
        (2a^4\rho_{xy}^4-a^2\rho_{xy}^2-a^2\rho_{xy}^2)(a^2|\rho_{xy}|-1)
        &\overset{(a)}{<}2a^2|\rho_{xy}|(a^2\rho_{xy}^2-1)(a^2\rho_{xy}^2-1),
    \end{split}
\end{equation}
where in $(a)$ we use that the denominator is positive (see below Eq.~\eqref{eq:proof_confounding_reference_latent}). Since Eq.~\eqref{eq:proof_latent_confouding_step1} contains only contributions $\rho_{xy}^2$ and $|\rho_{xy}|$, we can without loss of generality only consider the domain $\rho_{xy}\in(0,1)$ for the derivation. Then Eq.~\eqref{eq:proof_latent_confouding_step1} becomes
\begin{equation}\label{eq:proof_latent_confounding_step2}
    \begin{split}
        (2a^4\rho_{xy}^4-a^2\rho_{xy}^2-a^2\rho_{xy}^2)(a^2\rho_{xy}-1)
        &<2a^2\rho_{xy}(a^2\rho_{xy}^2-1)(a^2\rho_{xy}^2-1),\\
        2a^6\rho_{xy}^5-2a^4\rho_{xy}^3-2a^4\rho_{xy}^4+2a^2\rho_{xy}^2
-2a^2\rho_{xy}(a^2\rho_{xy}^2-1)^2&<0,\\
2a^6\rho_{xy}^5-2a^4\rho_{xy}^3-2a^4\rho_{xy}^4+2a^2\rho_{xy}^2
-2a^2\rho_{xy}(a^4\rho_{xy}^4-2a^2\rho_{xy}^2+1)&<0,\\
2a^6\rho_{xy}^4-2a^4\rho_{xy}^2-2a^4\rho_{xy}^3+2a^2\rho_{xy}
-2a^2(a^4\rho_{xy}^4-2a^2\rho_{xy}^2+1)&<0,\\
2a^6\rho_{xy}^4-2a^4\rho_{xy}^2-2a^4\rho_{xy}^3+2a^2\rho_{xy}^2
-2a^6\rho_{xy}^4+4a^4\rho_{xy}^2-2a^2&<0,\\
-2a^4\rho_{xy}^3+2(a^2+a^4)\rho_{xy}^2-2a^2&<0.
    \end{split}
\end{equation}

Figure~\ref{fig:proof_latent_confounding_c1} shows the shape of the left-hand side of Eq.~\eqref{eq:proof_latent_confounding_step2}. The shape is symmetric around 0, and on the domain $\rho_{xy}\in[0,2)$ the shape is convex. Therefore, on the domain $\rho_{xy}\in(0,1)$,
\begin{equation}\label{eq:proof_latent_confounding_step3}
-2a^4\rho_{xy}^3+2(a^2+a^4)\rho_{xy}^2-2a^2<-2a^41^3+2(a^2+a^4)1^2-2a^2=0.
\end{equation}
Hence, Condition~$(c'.1)$ is satisfied.

To continue with Condition~$(c'.2)$. We have that
\begin{equation}
    \operatorname{sign}(\sigma_{hx}\sigma_{hy})=\operatorname{sign}(a|\rho_{xy}|a\rho_{xy})=\operatorname{sign}(\rho_{xy})=\operatorname{sign}(\sigma_{xy}),
\end{equation}
where we have used that $e,|\rho_{xy}|>0$. Hence, Condition~$(c'.2)$ is satisfied.

Finally, we consider Condition~$(c'.3)$. We have that
\begin{equation}
    \frac{\rho_{hy}\rho_{hx}}{\rho_{xy}}=\frac{a|\rho_{xy}|a\rho_{xy}}{\rho_{xy}}=a^2|\rho_{xy}|<1,
\end{equation}
where we have used that $0<a<1$ and $0<|\rho_{xy}|<1$. Hence, Condition~$(c'.3)$ is satisfied. Therefore, all conditions are satisfied and the choice $\rho_{hx}=a|\rho_{xy}|$ and $\rho_{hy}=a\rho_{xy}$ with $0<a<1$ makes the covariance matrix $\Sigma$ non-identifiable.

\begin{figure}
    \centering

    \begin{subfigure}[b]{0.4\textwidth}
        \centering
        \includegraphics[width=\textwidth]{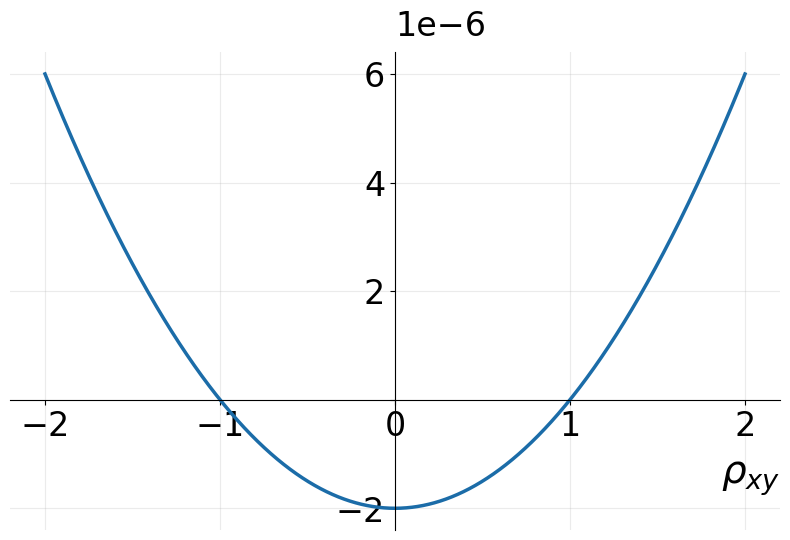}
        \caption{The left-hand side of Eq.~\eqref{eq:proof_latent_confounding_step2}.}
        \label{fig:proof_latent_confounding_c1}
    \end{subfigure}
    \hfill
    \begin{subfigure}[b]{0.4\textwidth}
        \centering
        \includegraphics[width=\textwidth]{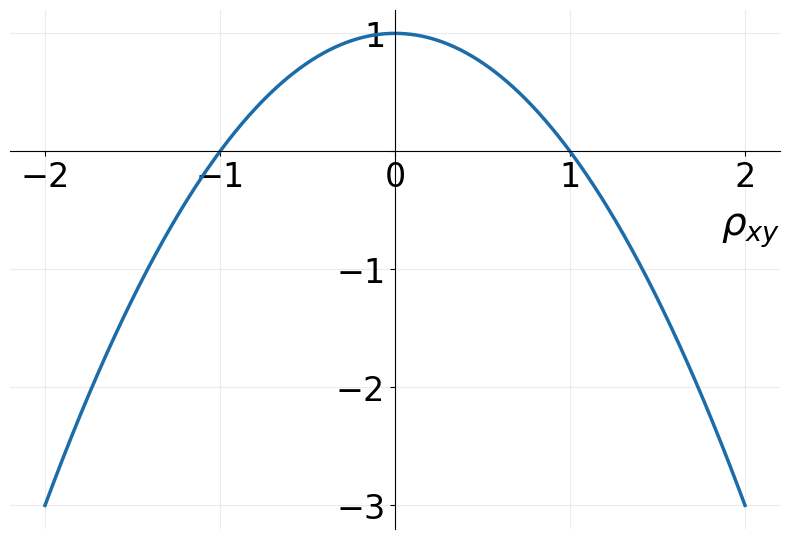}
        \caption{The right-hand side of Eq.~\eqref{eq:proof_latent_confounding_sylvester}.}
        \label{fig:proof_latent_confounding_pd3}
    \end{subfigure}

    \caption{The sub-figures show the shape of two functions that are used in the proof of Appendix~\ref{app:proof_latent_confounding}.}
    \label{fig:proof_latent_confounding}
\end{figure}

Next we verify if the choice is $t$-faithful.
We begin by verifying that our choice $\Sigma\in PD_3$. We note that, if we set $\rho_{hx}=a|\rho_{xy}|$ and $\rho_{hy}=a\rho_{xy}$ with $0<a<1$. Then, since, $\rho_{xy}\in (-1,1)\setminus\set{0}$, we have that  $\rho_{hx}\in (0,1)$ and $\rho_{hy}\in (-1,1)\setminus\set{0}$. Thereby, satisfying part of the constraints. Therefore, only verifying if Sylvester's criterion holds remains. This means we need to verify Eq.~\eqref{eq:sylvester_criterion_3__withtout_latent}, i.e., 
\begin{equation}\label{eq:proof_latent_confounding_sylvester}
    \begin{split}
        0&<1+2\rho_{xy}\rho_{hx}\rho_{hy}-\big(\rho_{xy}^2+\rho_{hy}^2+\rho_{hx}^2\big),\\
        0&\overset{(a)}{<}1+2a^2|\rho_{xy}|\rho_{xy}^2-(a^2\rho_{xy}^2+a^2\rho_{xy}^2+\rho_{xy}^2),\\
        0&<1+2a^2\rho_{xy}^2(|\rho_{xy}|-1)-\rho_{xy}^2,
    \end{split}
\end{equation}
where in $(a)$ we set $\rho_{hx}=a|\rho_{xy}|$ and $\rho_{hy}=a\rho_{xy}$. Figure~\eqref{fig:proof_latent_confounding_pd3} shows the shape of the right-hand side of Eq.~\eqref{eq:proof_latent_confounding_sylvester}. The shape is symmetric around $\rho_{xy}=0$, and on the domain $\rho_{xy}\in[0,2)$ is concave. Hence, for $\rho_{xy}\in(-1,1)\setminus\set{0}$ we have that
\begin{equation}
    1+2a^2\rho_{xy}^2(|\rho_{xy}|-1)-\rho_{xy}^2>1+2a^2(\pm 1)^2\big(|\pm 1|-1\big)-(\pm 1)^2=0.
\end{equation}
Hence, Sylvester's criterion holds.

In addition, since $\rho_{hx}\in (0,1)$ and $\rho_{hy}\in (-1,1)\setminus\set{0}$, the choice respects the marginal independences. 

Finally, combining that our choice $\Sigma\in PD_3$ and $\Sigma$ respects the marginal independences, we have that $\Sigma\in F_G$. Therefore, by the proof in Appendix~\ref{sec:appendix_proof_no_latent_confoundin} $\mathcal{M}^p_{G,\alpha}=F_G$, we have that for any observable block $\Sigma_{oo}$, we can always construct a valid covariance matrix $\Sigma$ such that $\alpha$ is non-identifiable in graph $G$.

\end{proof}

\subsubsection{Instrumental Variable (IV)}
\begin{proof}
    We adopt the assumptions and conventions stated at the start of this section. Let $G=(V,E)$ be the graph of Fig.~\ref{fig:iv}, where node $H$ is hidden. By Lemma~\ref{lem:no_latent_id_values}, $\operatorname{sign}(\alpha)=\operatorname{sign}(\sigma_{zy})/\operatorname{sign}(\sigma_{zx})$. This is still constrained by the observed part of $\Sigma$. Therefore, we can use the same conclusion as in the case without latent variables. By Theorem~\ref{thm:no_latent_id}, the sign of edge $\alpha$ in graph $G$ is identifiable.
\end{proof}

\subsubsection{Cycle with IV}
\begin{proof}
    We adopt the assumptions and conventions stated at the start of this section. Let $G=(V,E)$ be the graph of Fig.~\ref{fig:iv_cyclic}, where node $H$ is hidden. By Lemma~\ref{lem:no_latent_id_values},
    \begin{equation}
        (\alpha)=\begin{cases}
        \operatorname{sign}(\sigma_{zy}/\sigma_{zx})\qquad \text{, if }\, \rho_{zy}\rho_{xy}/\rho_{zx}<1,\\
        -\operatorname{sign}(\sigma_{zy}/\sigma_{zx})\quad \text{, if }\, \rho_{zy}\rho_{xy}/\rho_{zx}>1.
    \end{cases}
    \end{equation}
    This is still constrained by the observed part of $\Sigma$. Therefore, we can use the same conclusion as in the case without latent variables. By Theorem~\ref{thm:no_latent_id}, the sign of edge $\alpha$ in graph $G$ is identifiable.
\end{proof}

\end{document}